\setlist{nolistsep}
\newcommand{\FF}{\mathbb{F}}
\newcommand{\CC}{\mathbb{C}}
\newcommand{\NN}{\mathbb{N}}
\newcommand{\kk}{\Bbbk}%
\newcommand{\Map}{\operatorname{Map}}
\newcommand{\Id}{\operatorname{Id}}
\renewcommand{\ge}{\geqslant}
\renewcommand{\le}{\leqslant}
\newcommand{\osigma}{\boldsymbol{\sigma}}
\newcommand{\osigmas}{\osigma^M}
\newcommand{\osigman}{\osigma^N}
\newcommand{\ow}{\overline{w}}
\newcommand{\bw}{\boldsymbol{w}}
\newcommand{\bww}{\boldsymbol{w'}}
\newcommand{\emptyc}{\boldsymbol{e}_C}
\newcommand{\emptyr}{\boldsymbol{e}_R}
\newcommand{\T}{\mathcal{T}}
\newcommand{\C}{\mathcal{C}}
\newcommand{\R}{\mathcal{R}}
\newcommand{\Ins}{\mathcal{I}}
\newcommand{\TotSh}{\mathcal{S}}
\newcommand{\YT}{\mathbf{YT}}
\newcommand{\Cols}{\mathbf{Col}}
\newcommand{\Rows}{\mathbf{Row}}
\newcommand{\Coll}{\mathbf{Col}^{\bullet}}
\newcommand{\Roww}{\mathbf{Row}^{\bullet}}
\newcommand{\CCol}{\mathbf{C}}
\newcommand{\Pl}{\mathbf{Pl}}
\newcommand{\Norm}{\mathbf{Norm}}
\newcommand{\Mon}{\mathbf{M}}
\newcommand{\oMon}{{\Mon}}
\newcommand{\oNorm}{{\Norm}}
\newcommand{\cols}{\operatorname{cols}}
\newcommand{\rows}{\operatorname{rows}}
\newcommand\longmapsfrom{\mathrel{\reflectbox{\ensuremath{\longmapsto}}}}
\newcommand\lessrot{\mathbin{\rotatebox[origin=c]{90}{\ensuremath{<}}}}
\newcommand\dotover{\leavevmode\cleaders\hb@xt@ .22em{\hss $\cdot$\hss}\hfill\kern\z@}
\newcommand{\dotfrac}[2]{
\ooalign{$\genfrac{}{}{0pt}{0}{#1}{#2}$\cr\dotover\cr}
}
\definecolor{mygreen}{rgb}{0,.455,0} 
\definecolor{myviolet}{rgb}{.45,.05,.545}
\definecolor{myred}{rgb}{.545,0,0}
\definecolor{myblue}{rgb}{.024,.15,.645} 
\colorlet{MyRed}{myred!40!white}
\colorlet{MyBlue}{myblue!40!white}
\colorlet{MyGreen}{mygreen!40!white}
\colorlet{MyViolet}{myviolet!40!white}
\definecolor{MyGrey}{rgb}{.804,.804,.756} 
  \theoremstyle{plain}
\newtheorem{theorem}{Theorem}
\newtheorem{conjecture}{Conjecture}
\newtheorem{proposition}{Proposition}[section]
\newtheorem{corollary}[proposition]{Corollary}
\newtheorem{lemma}[proposition]{Lemma}
  \theoremstyle{remark}
\newtheorem{observation}[proposition]{Observation}
\newtheorem{remark}[proposition]{Remark}
  \theoremstyle{definition}
\newtheorem{definition}[proposition]{Definition}
\newtheorem{notation}[proposition]{Notation}
\newtheorem{example}[proposition]{Example}
\begin{document}
  \title{Plactic monoids: a braided approach}
%  \subtitle{What do braids know about Young tableaux?}
  \author{Victoria LEBED}{Trinity College Dublin}
  \email{lebed.victoria@gmail.com}
  \webpage{\nolinkurl{http://www.maths.tcd.ie/~lebed/index.html}}
  \subjclass[2010]{20M50, % Connections of semigroups with homological algebra and category theory
  16T25, % Yang-Baxter equations
  16E40, % (Co)homology of rings and algebras (e.g. Hochschild, cyclic, dihedral, etc.)
  55N35, % Other homology theories
  16S15} % Finite generation, finite presentability, normal forms (diamond lemma, term-rewriting)
  \keywords{plactic monoid, Young tableau, Schensted algorithm, 
  Yang--Baxter equation, idempotent braiding, 
  Hochschild (co)homology, braided (co)homology}

  \maketitle

  \begin{abstract}
Young tableaux carry an associative product, described by the Schensted algorithm. They thus form a monoid $\mathbf{Pl}$, called \emph{plactic}. It is central in numerous combinatorial and algebraic applications. In this paper, the tableaux product is shown to be completely determined by a braiding $\sigma$ on the (much simpler!) set of columns $\mathbf{Col}$. Here a \emph{braiding} is a set-theoretic solution to the Yang--Baxter equation. As an application, we identify the Hochschild cohomology of $\mathbf{Pl}$, which resists classical approaches, with the more accessible braided cohomology of $(\mathbf{Col},\sigma)$. The cohomological dimension of $\mathbf{Pl}$ is obtained as a corollary. Also, the braiding~$\sigma$ is proved to commute with the classical crystal reflection operators~$s_i$.
  \end{abstract}

  \section{Introduction}

\ytableausetup{centertableaux} 

A \emph{Young tableau} on a totally ordered alphabet~$A$ is a finite decreasing sequence of non-empty $A$-rows. Here \emph{$A$-rows} are non-decreasing words $\bw \in A^*$, partially ordered by the relation
\begin{align}\label{E:RowOrder}
x_1 \ldots x_s \underset{R}{\succ} y_1 \ldots y_t \qquad &\Longleftrightarrow \qquad s \le t \quad \text{and} \quad \forall 1 \le i \le s, \, x_i > y_i.
\end{align}
Introduced by Young in 1900, these combinatorial gadgets play a crucial role in the representation theory of the symmetric groups~$S_n$ and the complex general linear groups $GL_n(\CC)$, in the Schubert calculus of Grassmannians, and in the study of symmetric functions. Figure~\ref{P:YT} shows the graphical representation of a Young tableau.\footnote{We use the French rather than the English notation, where smaller rows are placed at the top. It makes column reading more natural---from top to bottom.}. 
 
\begin{figure}[!h]\centering
\vspace*{-10mm}
\[T = \ytableaushort{3,266,134} \hspace*{3cm} \parbox[c][7em][c]{0.3\textwidth}{$\R(T) = 3\;266\;134$ \\ \; \\ $\C(T) = 321\;63\;64$}\]
\vspace*{-10mm}
\caption{A Young tableau with its row-wise and column-wise readings}\label{P:YT}
\end{figure}

The tableaux are read either row-wise or column-wise, yielding two injective maps
$\R,\C \colon \YT_A \hookrightarrow A^*$ 
from the set of Young tableaux on~$A$ to the word monoid on~$A$; the definition of these maps is clear from the example in Figure~\ref{P:YT}, where $A = \NN$. Alternatively, Young tableaux can be defined as finite non-decreasing sequences of non-empty \emph{$A$-columns} (= decreasing words), with respect to the partial order
\begin{align}\label{E:ColOrder}
x_1 \ldots x_s \underset{C}{\preccurlyeq} y_1 \ldots y_t \qquad \Longleftrightarrow \qquad s \ge t \quad \text{and} \quad \forall 1 \le i \le t, \, x_{i+s-t} \le y_i.
\end{align}
Motivated by the problem of longest non-decreasing subwords for $\bw \in A^*$, Schensted~\cite{Schensted} proposed an algorithm\footnote{An essentially equivalent version of this algorithm appeared in an earlier but long ignored work of Robinson~\cite{Robinson}.} for inserting new entries into a Young tableau, realized by an {insertion map} $\Ins \colon \YT_A \times A^* \to \YT_A$. 
His algorithm is recalled in Section~\ref{S:Algo}. It comes with several bonuses. First, it endows~$\YT_A$ with the associative product $T \ast T' := \Ins(T, \R(T'))$. Second, it is used to construct a Young tableau out of any word $\bw \in A^*$, via the {tableau map} $\T \colon  \bw \mapsto \Ins(\emptyset,\bw)$. This map is surjective, with sections $\R$ and $\C$. The tableau $\T(\bw)$ contains useful information about the word~$\bw$. In particular, the length of its last row yields the maximal length of a non-decreasing subword of~$\bw$, answering the original question of Schensted. More generally, Greene~\cite{placticGreene} showed the lengths of $k$ last rows (or $k$ first columns) of $\T(\bw)$ to encode the maximal length of a subword of~$\bw$ forming a shuffle of $k$ non-decreasing (respectively, decreasing) words. 

Knuth~\cite{placticKnuth} deduced from $\T$ a bijection between the monoid $(\YT_A,\ast)$ and the quotient $\Pl_A$ of~$A^*$ by the relations
\begin{align}\label{E:Knuth}
xzy &\sim zxy, \qquad x \le y < z;& yxz &\sim yzx, \qquad x < y \le z.
\end{align}
This quotient was baptized \emph{plactic monoid}\footnote{With some imagination, relations~\eqref{E:Knuth} evoke plate tectonics, \emph{tectonique des plaques} in French.} and thoroughly explored by Lascoux and Sch{\"u}tzenberger~\cite{placticLS}, followed by numerous other researchers. Among its multiple applications are one of the first proofs of the Littlewood--Richardson rule\footnote{famous for its long announce-a-proof-and-wait-for-an-error-to-be-found history} \cite{LRrule}, which is used to compute products of Schur functions, intersections of Grassmannians, and tensor products of irreducible representations of $GL_n(\CC)$ or $S_n$; and a combinatorial description of the Kostka--Foulkes polynomials~\cite{FoulkesLS}, which appear in the representation theory of $GL_n(\FF_q)$ as well as in certain lattice models in statistical mechanics. Plactic monoids are also strongly related with crystal bases, which describe the behavior of quantum groups when their deformation parameter $q$ tends to~$0$ \cite{placticQuantum,placticQuantumKT}. See~\cite{plactic} for a beautiful self-contained overview of different facets of plactic monoids. 

The very recent work of Cain--Gray--Malheiro \cite{placticGSbasis} and Bokut--Chen--Chen--Li \cite{placticGSbasis2} launched a revival of plactic monoids. They independently showed that the set $\Coll_A$ of non-empty columns in~$A^*$ (alternatively, the set $\Roww_A$ of non-empty rows) forms a {Gr\"obner--Shirshov basis} for~$\Pl_A$. In~\cite{Lopatkin}, Lopatkin fed the column basis into the algebraic discrete Morse machinery in order to compute %the cohomology ring and the Hochschild cohomology ring
the Hochschild cohomology of the algebras $\kk \Pl_A$ (here $\kk$ is a field). At the heart of his work lies a study of the restriction of the product~$\ast$ to $\Cols_A^{\times 2}$. Here the column set $\Cols_A = \Coll_A \sqcup \{\emptyc\}$ is seen inside~$\YT_A$, and the empty column~$\emptyc$ is identified with the empty tableau. Lopatkin observed that the $\ast$-product of two one-column tableaux has at most two columns. This yields an operator\footnote{In fact Lopatkin worked on $(\Coll_A)^{\times 2}$, which made his~$\sigma_C$ only partially defined and added a lot of special cases to his definitions and proofs.} $\sigma_C$ on $\Cols_A^{\times 2}$, which turns out to be a \emph{braiding}, i.e., a (non-invertible) solution to the \emph{Yang--Baxter equation}
\begin{equation}\label{E:YBE}
(\sigma \times \Id)  (\Id \times \sigma)  (\sigma \times \Id) = (\Id \times \sigma)  (\sigma \times \Id)  (\Id \times \sigma)
\end{equation}
on $\Cols_A^{\times 3}$. This equation plays a fundamental role in mathematical areas ranging from statistical mechanics to quantum field theory, from low-dimensional topology to quantum group theory. Attention to its set-theoretic form dates back to Drinfel$'$d~\cite{DrST}.

In Section~\ref{S:PlacticBraiding} we extend the braiding~$\sigma_C$ to the much larger set $\YT_A^e:=\YT_A \times \NN_0$ of $\NN_0$-decorated Young tableaux---or, equivalently, to~$\Pl_A^e:=\Pl_A \times \NN_0$. We also propose its row version $\sigma_R$, defined on $\Rows_A = \Roww_A \sqcup \{\emptyr\}$ and on the whole~$\YT_A^e$. The $\NN_0$-decorations keep track of empty columns or rows. Such ``dummy'' elements are recurrent in normalization problems (cf. \cite{HessOz,DehGui,LebedIdempot}); the $\ast$-product, and hence our braidings, can be seen as particular instances of normalization. We recover the undecorated plactic monoid~$\Pl_A$ as the {structure monoid} for~$\sigma_C$ or~$\sigma_R$ (Theorem~\ref{T:PlIsStrMonoid}). The  \emph{structure monoid} of a braiding is a quadratic monoid classically associated to it, in a way that captures many of its properties.

Both~$\sigma_C$ and~$\sigma_R$ are compatible with the (concatenation$\times$sum) product on~$\Pl_A^e$, and yield two braided commutative monoid structures (Theorem~\ref{T:BraidingPlactic}). Moreover, they define two actions of the \emph{positive braid monoid}~$B_k^+$ \eqref{E:Bn} on $(\Pl_A^e)^{\times k}$. Even better: $\sigma_C$ is idempotent on $\Cols_A^{\times 2}$, and so is $\sigma_R$ on $\Rows_A^{\times 2}$. Basics on idempotent braidings are recalled in Sections~\ref{S:IdempotentBraiding}; see~\cite{LebedIdempot} for more details and examples. As is always the case for idempotent braidings, the $B_k^+$-actions given by $\sigma_C$ and~$\sigma_R$ restrict to the actions of the \emph{$0$-Hecke monoid} $C_k$~\eqref{E:Cn} on $\Cols_A^{\times k}$ and $\Rows_A^{\times k}$ respectively. Also known as \emph{Coxeter monoids}\footnote{These monoids were defined and studied for all Coxeter groups. Since only the symmetric group case is relevant for us, we use simplified terms and notations.}, the $C_k$ appeared in the work of Tsaranov~\cite{Tsaranov}, and since then were applied to Hecke algebras, to the Bruhat order on Coxeter groups, to Tits buildings, and to planar graphs \cite{FomGr,HST0Hecke,DolanTrimble,GanMaz,Kenney,Kenney2}.

Lascoux and Sch{\"u}tzenberger~\cite{LS88} introduced the operators $s_i$  which in every Young tableau on $A_n = \{1,2,\ldots,n\}$ replace some carefully chosen $i$s with $i+1$, and vice versa.  The~$s_i$ are sometimes referred to as \emph{coplactic}, or \emph{crystal reflection} operators. They yield an $S_n$-action on~$\Pl_{A_n}$, instrumental in some applications of Young tableaux~\cite{plactic}. In Section~\ref{S:Permutation} (Theorem~\ref{T:ActionCompat}) we show that this $S_n$-action, extended to $(\Pl_{A_n}^e)^{\times k}$, commutes with the $B_k^+$-action given by~$\sigma_C$ or~$\sigma_R$.

In~\cite{Lebed1}, the author developed a cohomology theory for braidings---and more generally, for solutions to the YBE in any preadditive monoidal category. See \cite{FRS_BirackHom,HomologyYB,Eisermann,BirackHom} for alternative approaches, and~\cite{Lebed1,LebedVendramin,PrzWang} for their comparison. This \emph{braided cohomology theory} unifies the cohomological study of basic algebraic structures: associative and Lie algebras, self-distributive structures, bialgebras, Hopf (bi)modules, Yetter-Drinfel$'$d modules, factorized monoids, etc. It also suggests nicely behaved theories for new algebraic structures, such as cycle sets~\cite{LebedVendramin}. Moreover, it comes with a handy graphical calculus, replacing technical verifications. Section~\ref{S:BrHom} is a reminder on the braided cohomology theory for idempotent set-theoretic braidings. In this particular case, the braided cohomology coincides with the Hochschild cohomology of the structure monoid of the braiding. Moreover, they are isomorphic as graded algebras when the coefficients allow cup products to be defined (see \cite{LebedIdempot}, or Theorem~\ref{T:BrHomIdempot}). Differential complexes are much smaller and simpler on the braided side, yielding an efficient tool for computing Hochschild cohomology. 

As an application, in Section~\ref{S:BrHomPlactic} Hochschild cohomology computations for plactic monoids are substituted with the simpler braided cohomology computations for the column braiding~$\sigma_C$. This allows us to identify a copy of the exterior algebra~$ \Lambda (\kk A)$ inside the Hochschild cohomology $H^* (\Pl_A; \kk)$ of~$\Pl_A$ with trivial coefficients when $A$ is finite (Theorem~\ref{T:CohomPlactic}). Conjecturally, this exterior algebra covers the whole cohomology. We thus simplify and sharpen computations from~\cite{Lopatkin}, including them into the conceptual framework of braided cohomology. Computations with different coefficients (Theorem~\ref{T:CohomPlactic2}) allow us to determine the cohomological dimension of~$\Pl_A$, in the Hochschild sense. It is $1$, $3$, or $\infty$, for $A$ of size $1$, $2$, or $> 2$ respectively.

\medskip
\textbf{Acknowledgments.} The author is grateful to Patrick Dehornoy for bringing her attention to the mysterious appearance of braidings in Viktor Lopatkin's work on plactic monoids; to Viktor Lopatkin for patient guidance through that work; and to Vladimir Dotsenko for the fruitful suggestion to vary coefficients. This work was partially supported by the program ANR-11-LABX-0020-01 (Henri Lebesgue Center, University of Nantes), and by a Hamilton Research Fellowship (Hamilton Mathematics Institute, Trinity College Dublin). 
%The author is grateful to the reviewer for useful remarks and interesting suggestions for a further development of the subject.  

\section{Schensted algorithm}\label{S:Algo}

Fix an ordered alphabet~$A$. Take a Young tableau~$T$ over~$A$ and an $x \in A$. We now recall two versions of the Schensted algorithm for inserting $x$ into~$T$, with  basic applications.

The right insertion algorithm works as follows. If~$T$ is empty or $x$ is at least as large as the last element of the last row of~$T$, then attach~$x$ to the right of this last row. Otherwise choose in the last row the leftmost element~$x'$ among those exceeding~$x$; replace it with~$x$; and insert~$x'$ into the tableau obtained from~$T$ by forgetting its last row, using the same procedure. An easy verification shows that the algorithm indeed produces a Young tableau. An example is treated in Figure~\ref{P:Schensted}.

\begin{figure}[!h]
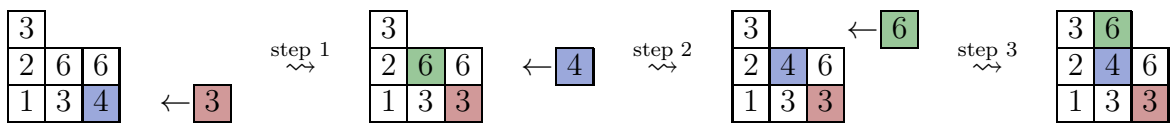
\centering
\vspace*{-.3cm}
\[\ytableaushort{3,266,13{*(MyBlue)4}\none{\none[\leftarrow]}{*(MyRed)3}} \quad \overset{\text{step }1}{\leadsto} \quad \ytableaushort{3,2{*(MyGreen)6}6\none{\none[\leftarrow]}{*(MyBlue)4},13{*(MyRed)3}} \quad \overset{\text{step }2}{\leadsto} \quad
\ytableaushort{3\none\none{\none[\leftarrow]}{*(MyGreen)6},2{*(MyBlue)4}6,13{*(MyRed)3}} \quad \overset{\text{step }3}{\leadsto} \quad \ytableaushort{3{*(MyGreen)6},2{*(MyBlue)4}6,13{*(MyRed)3}}\]
\vspace*{-.3cm}
\caption{Inserting the element~$3$ into a Young tableau from the right in three steps}\label{P:Schensted}
\end{figure}

The Schensted algorithm thus yields a map $\YT_A \times A \to \YT_A$ and, by iteration, the \emph{insertion map}
\begin{align*}
\Ins \colon \YT_A \times A^* &\to \YT_A.
\end{align*}
Specializing the first argument to~$\emptyset$, one gets the \emph{tableau map} 
\begin{align*}
\T \colon A^* &\to \YT_A.
\end{align*}
Also recall the \emph{row} and \emph{column maps} 
\begin{align*}
\R,\C \colon \YT_A &\to A^*
\end{align*}
from Figure~\ref{P:YT}. Finally, consider the \emph{Knuth equivalence}~$\sim$ generated by the three-letter relations~\eqref{E:Knuth}, and the quotient \emph{plactic monoid} $\Pl_A := A^* / \mathord\sim$.

The following lemma summarizes the basic properties of these maps and relations. Its proof is purely combinatorial and elementary; it can be found, for instance, in~\cite{plactic}.

\begin{lemma}\label{L:Insertion map}
\begin{enumerate}
\item The map~$\Ins$ descends to the quotient  $\YT_A \times \Pl_A \to \YT_A$.
\item The maps $\R$ and $\C$ are sections of $\T$: one has $\T  \R = \T  \C = \Id_{\YT_A}$.
\item The composition $\R  \T$ is Knuth-equivalent to the identity: for all $\bw \in Pl_A$, one has $\R  \T (\bw) \sim \bw$.
\end{enumerate}
\end{lemma}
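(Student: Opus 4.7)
The plan is to establish the three items in the order~(2), (3), (1), leaning on two standard insertion lemmas for the Schensted algorithm.

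For~(2), I would induct on the number of rows of~$T$. The combinatorial workhorse is the \emph{row-insertion lemma}: if a non-decreasing word~$\bw$ strictly dominates the top row of a Young tableau~$T'$ in the row order~\eqref{E:RowOrder}, then feeding the letters of~$\bw$ one by one into~$T'$ through the Schensted algorithm produces the tableau obtained from~$T'$ by stacking~$\bw$ on top as a new row. Granted this lemma, decomposing~$T$ as its top row sitting above the tableau of its remaining rows and applying the inductive hypothesis yields $\T\circ \R=\Id$; the identity $\T\circ \C=\Id$ follows symmetrically from a dual column-insertion lemma using~\eqref{E:ColOrder}. Both insertion lemmas reduce to inspecting how the bumping procedure interacts with the defining inequalities of rows, columns, and the two partial orders.

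For~(3), I would induct on the length of $\bw \in A^*$. Writing $\bw = \bww x$ and setting $T' := \T(\bww)$, the inductive hypothesis gives $\R(T') \sim \bww$, and it suffices to establish the one-step identity
\[
\R(\Ins(T', x)) \sim \R(T') \cdot x.
\]
I would walk through a single insertion of~$x$ and observe that each bump---where the incoming letter displaces some $x'$ up into the row above---rearranges the row reading by exactly one Knuth move from~\eqref{E:Knuth}, with the bumping letter, the bumped letter, and a well-chosen neighbour playing the three roles. Composing these elementary moves along the chain of bumps, and separately handling the boundary cases (no bump, or the bumped letter starting a new top row), yields the required Knuth equivalence.

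For~(1), I would use~(2) to rewrite $\Ins(T,\bw)=\T(\R(T)\cdot \bw)$, reducing the claim to the implication $\bw \sim \bw' \Rightarrow \T(\bw) = \T(\bw')$. Since~$\sim$ is generated by the local three-letter moves in~\eqref{E:Knuth}, it is enough, for every Young tableau~$T''$, to verify $\Ins(T'',xzy)=\Ins(T'',zxy)$ when $x \le y < z$, and $\Ins(T'',yxz)=\Ins(T'',yzx)$ when $x < y \le z$. For each of these identities one runs the Schensted algorithm on both sides and compares, splitting according to whether bumping escapes the current row and tracking where each of the three letters eventually lands. I expect this case analysis to be the main obstacle: it is the only genuinely three-letter phenomenon in the lemma, and it is precisely what forces the Knuth relations to take the form~\eqref{E:Knuth}.
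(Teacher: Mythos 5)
Your plan for parts (3) and (1) follows the standard route (the paper itself offers no proof, deferring to~\cite{plactic}, which argues essentially as you propose), but your argument for part~(2) rests on a false lemma. With the paper's conventions ($\R$ reads the top row first, and right insertion acts on the \emph{bottom} row), inserting a row that dominates the current top row does \emph{not} stack it on top. Simplest counterexample: let $T'$ be the one-cell tableau with entry $1$ and $\bw = 2$; then $2 \underset{R}{\succ} 1$, yet $\Ins(T',2)$ is the single row $12$, not $2$ placed above $1$. Likewise, inserting the row $266$ into the row $134$ gives the tableau with rows $3$ and $12466$, not $266$ over $134$. The correct insertion lemma is the mirror statement: if $\bw$ is a row dominated by the bottom row of $T'$ (so that appending $\bw$ \emph{below} $T'$ yields a Young tableau), then $\Ins(T',\bw)$ is exactly $T'$ with $\bw$ attached as the new bottom row; in the insertion the letters of $\bw$ successively bump out the old bottom row $r'$ of~$T'$, and $r'$ is then re-inserted into the remaining rows, so the lemma follows by induction on the number of rows. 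Accordingly, the induction proving $\T\R=\Id_{\YT_A}$ must peel off the \emph{bottom} (last-read) row: write $\R(T)=\R(T'')\,r$ with $r$ the bottom row, so that $\T\R(T)=\Ins(\T\R(T''),r)=\Ins(T'',r)=T$. Peeling off the top row, as you do, leads to $\Ins(r_1,\R(T_{\mathrm{rest}}))$, to which the inductive hypothesis does not apply. The column statement $\T\C=\Id_{\YT_A}$ needs the same care, with the rightmost (last-read) column peeled off and the order~\eqref{E:ColOrder} used in the corresponding direction.

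Two further caveats on the parts whose strategy is sound. In~(3), a single bump is not ``exactly one Knuth move'': proving that the reading word of a row after inserting $x$ is equivalent to the row followed by $x$ takes a chain of moves --- first slide $x$ leftwards past the larger letters using $yxz\sim yzx$, then slide the bumped letter out to the front using $xzy\sim zxy$ --- and the multi-row case composes these chains; the statement is standard but your description of the mechanism is inaccurate. In~(1), your reduction via~(2) to the identities $\Ins(T'',xzy)=\Ins(T'',zxy)$ for $x\le y<z$ and $\Ins(T'',yxz)=\Ins(T'',yzx)$ for $x<y\le z$ is correct, but this verification is precisely Knuth's case analysis and is the main content of the item; as written you have named the obstacle rather than overcome it, so you should either carry out that analysis or cite it.
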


These properties lead to the following fundamental result:

\begin{theorem}[Knuth, \cite{placticKnuth}]\label{T:Knuth}
The tableau map~$\T$ induces a bijection between~$\Pl_A$ and~$\YT_A$, with the inverse induced by the row map~$\R$ (equivalently, by the column map~$\C$). The concatenation product on~$\Pl_A$ corresponds under this bijection to the associative product
\begin{align}\label{E:Ast}
T \ast T' &= \Ins(T, \R(T'))
\end{align}
on~$\YT_A$, for which the empty tableau is the unit element.
\end{theorem}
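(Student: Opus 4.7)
My plan is to derive everything directly from Lemma~\ref{L:Insertion map}; granted that lemma, the theorem is pure diagram chasing. For the bijection, I first specialize part~(i) of the lemma to $T = \emptyset$: since $\Ins(\emptyset,-)$ descends in its second argument, $\bw \sim \bww$ forces $\T(\bw) = \T(\bww)$, so $\T$ factors through $\Pl_A$. Part~(ii) reads $\T\R = \T\C = \Id_{\YT_A}$, so $\R$ and $\C$ are right inverses of $\T$ and, in particular, $\T$ is surjective. Part~(iii), $\R\T(\bw) \sim \bw$, descends to $\R\T = \Id_{\Pl_A}$ on the quotient, showing that $\R$ is also a left inverse. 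Hence $\T\colon \Pl_A \to \YT_A$ is a bijection whose two-sided inverse is induced by $\R$; the same argument with $\C$ in place of $\R$ shows that $\C$ induces the very same inverse.

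Next, the bijection just built transports the (associative, unital) concatenation product on~$\Pl_A$ to a product on~$\YT_A$; it remains to match the transported product with the formula~\eqref{E:Ast}. Given $T, T' \in \YT_A$, set $\bw := \R(T)$ and $\bw' := \R(T')$, so that $\T(\bw) = T$ and $\T(\bw') = T'$ by~(ii). The transported product sends $(T, T')$ to
\begin{equation*}
\T(\bw \cdot \bw') \;=\; \Ins(\emptyset, \bw \bw') \;=\; \Ins(\Ins(\emptyset,\bw), \bw') \;=\; \Ins(T, \R(T')),
\end{equation*}
where the middle equality is the defining iterative nature of $\Ins$ as successive one-letter insertions. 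This is exactly $T \ast T'$. Unitality is immediate: $\emptyset \ast T = \Ins(\emptyset, \R(T)) = \T\R(T) = T$, and $T \ast \emptyset = \Ins(T, \emptyw) = T$ since feeding the empty word into the insertion algorithm leaves the tableau unchanged.

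The genuine difficulty therefore lies upstream, in the lemma itself---namely in checking that Schensted insertion is invariant under each of the three-letter Knuth relations~\eqref{E:Knuth} (clause~(i)), and that the row reading of $\T(\bw)$ is Knuth-equivalent back to $\bw$ (clause~(iii)). Both of these reduce to a careful case analysis tracking where in the last row the bumped element lands, distinguishing configurations $x \le y < z$ from $x < y \le z$. I would import this combinatorial core from the standard references (e.g.\ \cite{plactic}) rather than reprove it; at the level of the theorem itself the only substantive gesture is the use of the iterative definition of $\Ins$ to collapse $\T$ of a concatenation into $\Ins$ followed by a row reading.
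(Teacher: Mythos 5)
Your proof is correct and follows exactly the route the paper indicates: the theorem is stated there as Knuth's classical result with no proof beyond the remark that it follows from Lemma~\ref{L:Insertion map}, and your diagram chase (factoring $\T$ through $\Pl_A$ via part~(i), using parts~(ii)--(iii) for the bijection, and collapsing $\T(\bw\bww)$ into $\Ins(T,\R(T'))$ via the iterative definition of~$\Ins$) is precisely that derivation. One tiny wording point: part~(iii) of the lemma is stated only for~$\R$, so for~$\C$ you should not invoke ``the same argument'' but simply note that $\T\C=\Id_{\YT_A}$ makes the map induced by~$\C$ a right inverse of the bijection~$\T$, hence equal to its inverse.
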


Now, the product~$\ast$ from the theorem suggests one more way of inserting an element~$x$ into a Young tableau~$T$. Namely, one can view~$x$ as a one-cell tableau and compute $x \ast T$. By iteration, this gives the \emph{left insertion map}
\begin{align*}
\Ins' \colon A^* \times \YT_A &\to \YT_A.
\end{align*}
The associativity of~$\ast$ and the properties from Lemma~\ref{L:Insertion map} imply that $\Ins'$ descends to the quotient $\Pl_A \times \YT_A$, and that the corresponding left tableau map $\bw \mapsto \Ins'(\bw,\emptyset)$ coincides with~$\T$. 

We next give a combinatorial description of the operation $x \ast T$, which can be called the \emph{left Schensted algorithm}. If~$T$ is empty or $x$ is larger than the first element of the first column of~$T$, then attach~$x$ to the top of this first column. Otherwise choose in the first column the smallest element $x' \ge x$, replace it with~$x$, and then insert~$x'$ into the tableau obtained from~$T$ by forgetting its first column, using the same procedure. An example is treated in Figure~\ref{P:SchenstedLeft}.

\begin{figure}[!h]
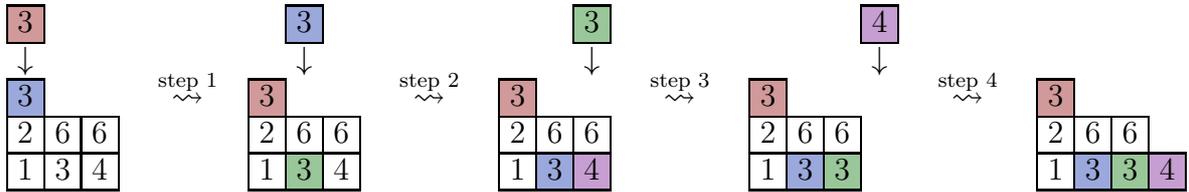
\centering 
\hspace*{-.6cm}
\begin{align*}
\ytableaushort{{*(MyRed)3},{\none[\downarrow]},{*(MyBlue)3},266,134} \quad \overset{\text{step }1}{\leadsto} \quad &\ytableaushort{\none{*(MyBlue)3},\none{\none[\downarrow]},{*(MyRed)3},266,1{*(MyGreen)3}4} \quad \overset{\text{step }2}{\leadsto} \quad \ytableaushort{\none\none{*(MyGreen)3},\none\none{\none[\downarrow]},{*(MyRed)3},266,1{*(MyBlue)3}{*(MyViolet)4}} \quad \overset{\text{step }3}{\leadsto} \quad \ytableaushort{\none\none\none{*(MyViolet)4},\none\none\none{\none[\downarrow]},{*(MyRed)3},266,1{*(MyBlue)3}{*(MyGreen)3}} \quad \overset{\text{step }4}{\leadsto} \quad \ytableaushort{\none,\none,{*(MyRed)3},266,1{*(MyBlue)3}{*(MyGreen)3}{*(MyViolet)4}}
\end{align*}
\hspace*{-.6cm}
\caption{Inserting the element~$3$ into a Young tableau from the left in four steps}\label{P:SchenstedLeft}
\end{figure}

\begin{remark}\label{R:ColRowDuality}
The two versions of the Schensted algorithm are in fact polar cases of the same general procedure. Consider binary relations $\triangleleft_1,\triangleleft_2$ on a set~$A$ which 
\begin{itemize}
   \item are multi-transitive: $x \triangleleft_i y \triangleleft_j z$ implies $x \triangleleft_i z$ and $x \triangleleft_j z$ for all $i,j \in \{1,2\}$;
   \item satisfy the law of the excluded third: for all $x,y \in A$, exactly one of $x \triangleleft_1 y$ and $y \triangleleft_2 x$ holds. 
\end{itemize}
Define Young tableaux on such data $(A,\triangleleft_1,\triangleleft_2)$, called \emph{admissible data}, as in the ordered case, with~$\triangleleft_1$ replacing~$<$ and~$\triangleleft_2$ replacing~$\le$. Schensted's recipes, repeated verbatim, describe how to insert elements into such Young tableaux on the right and on the left. Now, the data $(A,\triangleleft_2,\triangleleft_1)$ are also admissible, and yield tableaux which are the transposes of those for $(A,\triangleleft_1,\triangleleft_2)$. The right Schensted algorithm for $(A,\triangleleft_2,\triangleleft_1)$ corresponds to the left one for $(A,\triangleleft_1,\triangleleft_2)$. To recover the case of an ordered~$A$, take $<$ as $\triangleleft_1$ and $\le$ as~$\triangleleft_2$. More generally, for an ordered set $A = A_1 \sqcup A_2$ split into two, the relations 
\[x \triangleleft_i y \qquad \Longleftrightarrow \qquad (x < y \;\text{ or }\; x=y \in A_i)\]
are admissible. On the other hand, starting from any admissible $(A,\triangleleft_1,\triangleleft_2)$ and identifying all $x,y \in A$ for which $x \triangleleft_i y \triangleleft_i x$ holds for some~$i$, one gets induced relations on the quotient which are precisely such ``split-order'' relations.
\end{remark}

We finish with one more elementary property of the insertion algorithms:

\begin{lemma}\label{L:CountRowCol}
The $\ast$-product of two one-row tableaux contains no more than two rows. The same property holds for columns.
\end{lemma}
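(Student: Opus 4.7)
The plan is to argue directly from the right Schensted algorithm for the row claim and then to extract the column claim either by duality or by a short appeal to Schensted's theorem. For the row claim, let $T,T'$ be one-row tableaux, so $T = x_1 \ldots x_s$ and $\R(T') = y_1 \ldots y_t$ are both non-decreasing words. By Theorem~\ref{T:Knuth}, $T \ast T' = \Ins(T, y_1 \ldots y_t)$, so one inserts $y_1, \ldots, y_t$ in turn into $T$. I would show by induction on $i$ that after the first $i$ insertions the tableau has at most two rows.

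The engine of the induction is the classical \emph{Row Bumping Lemma}: when weakly-increasing entries $y_1 \le \ldots \le y_t$ are fed one by one into a non-decreasing row, the successive bumping positions strictly increase and the successive bumped values $v_1 \le v_2 \le \ldots$ weakly increase. A short case distinction on the leftmost bottom-row entry exceeding $y_{i+1}$ suffices to check this. The bumped values $v_k$ are then sent in order to what will become the second row, which starts empty; since each $v_{k+1} \ge v_k$ is at least the rightmost entry currently sitting in the second row, it is simply appended there, never triggering a bump into a third row. Hence the product never grows past two rows.

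For the column claim I would either replay the analogous argument for the left Schensted algorithm of Section~\ref{S:Algo} applied to the strictly decreasing words $\C(T), \C(T')$---the duality spelled out in Remark~\ref{R:ColRowDuality} making this translation automatic---or invoke Schensted's theorem that the length of the longest (bottom) row of $\T(\bw)$ equals the maximal length of a non-decreasing subword of~$\bw$. A non-decreasing subword of a concatenation of two strictly decreasing words contains at most one letter from each piece, hence has length at most~$2$; so $T \ast T'$ has at most two columns. The only non-trivial ingredient is the Row Bumping Lemma itself, which is standard but demands a careful case split; granted it, the rest of the argument is elementary bookkeeping.
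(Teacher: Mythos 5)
Your argument is essentially the paper's: you track the successive insertions of the entries of the second row into the first, observe that the bumping positions strictly increase while the bumped values weakly increase, and conclude that all bumped entries settle into a single second row, with the column case handled by the left algorithm/duality of Remark~\ref{R:ColRowDuality} exactly as in the paper. Your alternative route for the column half---citing Schensted's longest-non-decreasing-subword theorem applied to the concatenation of two strictly decreasing words---is also valid, though it leans on an external classical result where the paper stays self-contained.
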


\begin{proof}
Use the Schensted algorithm to subsequently insert the entries $x_1,x_2,\ldots$ of a row ${r'}$ into a one-row tableau ${r}=y_1 y_2 \ldots$. Each~$x_i$ is placed into the cell occupied by the leftmost letter $y_{k(i)}$ in~${r}$ which is greater than~$x_i$ (excluding the already replaced letters $y_{k(j)}$, $j<i$) if there is one, or to the right of this row if not. Indeed, $x_{i+1}$ is at least as big as~$x_i$, and hence will replace someone to its right. This argument also gives $k(i) < k(i+1)$, hence $y_{k(i)} \le y_{k(i+1)}$. Thus all the chased letters $y_{k(i)}$ are attached to the second row from below. So, the tableau ${r} \ast {r'} = \Ins({r}, {r'})$ contains at most two rows. An example is treated in Figure~\ref{P:Prod2Col}. The column case is analogous. \qedhere

\begin{figure}[!h]
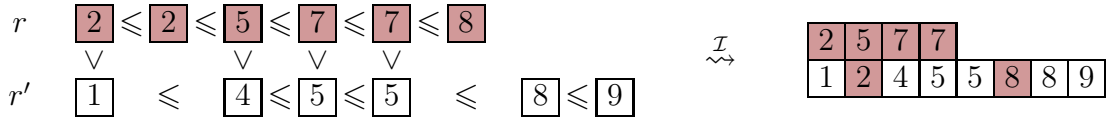
\centering 
\begin{align*}
\ytableaushort{{\none[{r}]}\none{*(MyRed)2}{\none[\le]}{*(MyRed)2}{\none[\le]}{*(MyRed)5}{\none[\le]}{*(MyRed)7}{\none[\le]}{*(MyRed)7}{\none[\le]}{*(MyRed)8},
\none\none{\none[\lessrot]}\none\none\none{\none[\lessrot]}\none{\none[\lessrot]}\none{\none[\lessrot]},
{\none[{r'}]}\none 1\none{\none[\le]}\none 4 {\none[\le]} 5 {\none[\le]} 5 \none{\none[\le]}\none 8 {\none[\le]} 9}
&\qquad\overset{\Ins}{\leadsto}\qquad \ytableaushort{{*(MyRed)2}{*(MyRed)5}{*(MyRed)7}{*(MyRed)7}, 1{*(MyRed)2}455{*(MyRed)8}89}
\end{align*}
\caption{The $\ast$-product of two rows}\label{P:Prod2Col}
\end{figure}
\end{proof}

\section{Idempotent braidings}\label{S:IdempotentBraiding}

In the next section, we will interpret the two versions of the Schensted algorithm in terms of idempotent braidings on the set of $A$-rows (respectively, $A$-columns). But before that we need to recall basic properties of this type of braidings; see~\cite{LebedIdempot} for a detailed exposition with proofs and multiple examples.

A \emph{braided set} is a set~$X$ endowed with a \emph{braiding}, i.e., a map $\sigma \colon X^{\times 2} \to X^{\times 2}$ satisfying the \emph{YBE}~\eqref{E:YBE}. An \emph{idempotent} braiding obeys the additional axiom $\sigma  \sigma = \sigma$. A braiding~$\sigma$ classically extends from~$X$ to words in~$X$; the resulting braiding on~$X^*$ is denoted by~$\osigma$. The set of \emph{normal} words for a given braided set is defined as
\[ \Norm(X,\sigma) = \{ \, x_{1}\ldots x_{k} \in X^* \;\, | \;\, \forall 1 \le j < k, \; \sigma (x_{j},x_{{j+1}}) = (x_{j},x_{{j+1}}) \,\}.\]

In real-life examples, braidings often interact with other structure on the underlying set. For instance, a \emph{braided commutative monoid} is a monoid $(M,\cdot,1)$ endowed with a braiding~$\sigma$, subject to the following compatibility conditions for all $u,v,w \in M$:
\begin{align}
\sigma (u \cdot v,w) &= (w'',u' \cdot v'), &&\text{where } \sigma(v,w)=(w',v'), \; \sigma(u,w')=(w'',u');\label{E:BrMonoid}\\
\sigma (u,v \cdot w) &= (v' \cdot w',u''), &&\text{where } \sigma(u,v)=(v',u'), \; \sigma(u',w)=(w',u'');\label{E:BrMonoid'}\\
w' \cdot v' &= v \cdot w, &&\text{where } \sigma(v,w)=(w',v');\label{E:BrMonoid''}\\
\sigma(v,w) &= (w,v), &&\text{where } v=1 \text{ or } w=1.\label{E:BrMonoid'''}
\end{align}
One recovers usual commutative monoids taking as~$\sigma$ the flip $(u,v) \mapsto (v,u)$. We will turn (a decorated version of) plactic monoids into braided commutative monoids in two ways---using an extension of the row or the column braiding.

A braiding induces an action of the \emph{positive braid monoid} 
\begin{equation}\label{E:Bn}
B_k^+ = \langle \, b_1, \ldots, b_{k-1} \, | \, b_ib_j=b_jb_i \text{ for } |i-j| >1,  \, b_ib_{i+1}b_i = b_{i+1}b_ib_{i+1} \,\rangle^+
\end{equation}
on~$X^{\times k}$, for all $k \in \NN$, via
\begin{equation}\label{E:BnActs}
b_i \mapsto \Id_X^{\times (i-1)} \times \sigma \times \Id_X^{\times (k-i-1)}.
\end{equation}
In the idempotent case, this action descends to the quotient
\begin{equation}\label{E:Cn}
C_k = \langle \, b_1, \ldots, b_{k-1} \, | \, b_ib_j=b_jb_i \text{ for } |i-j| >1,  \, b_ib_{i+1}b_i = b_{i+1}b_ib_{i+1}, \, b_ib_i=b_i \,\rangle^+
\end{equation}
of~$B_k^+$, referred to as the \emph{Coxeter monoid}.

An idempotent braided set $(X,\sigma)$ is called \emph{pseudo-unital}, or \emph{PUIBS}, if endowed with a \emph{pseudo-unit}, i.e., an element $1 \in X$ satisfying:
\begin{enumerate}
\item both $\sigma(1,x)$ and $\sigma(x,1)$ lie in $\{\, (1,x),(x,1) \,\}$ for all $x\in X$;
\item dropping any occurrence of~$1$ from a normal word, one still gets a normal word.
\end{enumerate}
Given a word $\bw \in X^*$, let the word $\boldsymbol{\ow}$ be obtained from it by erasing all its letters $1$. This yields a projection $\Norm(X,\sigma) \twoheadrightarrow \oNorm(X,\sigma,1)$, where $\oNorm(X,\sigma,1)$ is the set of normal words without the letter~$1$, called \emph{reduced normal words}. 
In our example of $A$-rows and $A$-columns, the empty row or column will be a pseudo-unit, and reduced normal words will correspond to Young tableaux.

The \emph{structure monoid} of a braided set $(X,\sigma)$ is presented as follows:
\[\Mon(X,\sigma) = \langle \, X \; | \; xy = y'x' \text{ whenever } \sigma(x,y)=(y',x'), \, x,y \in X \,\rangle^+.\] 
To get from it the \emph{reduced structure monoid} $\oMon(X,\sigma,1)$ of a PUIBS $(X,\sigma,1)$, identify the letter~$1$ with the empty word. A representative $x_{1}\ldots x_{k}$ of an element of $\Mon(X,\sigma)$, with $x_{j} \in X$, is called its \emph{normal form} if it is a normal word. \emph{Reduced normal form} is defined similarly. Structure monoids should be thought of as ``universal enveloping monoids'' of a braiding; in particular, they encode the representation theory of $(X,\sigma,1)$, in the sense of \cite{Lebed1,LebedIdempot}. This construction brings group-theoretic tools into the study of the YBE, and is the basis of most current approaches to the classification of braidings. In the opposite direction, it yields a rich source of (semi)groups and algebras with interesting algebraic properties (see \cite{GIBergh,ESS,Rump,JesOknI,Chouraqui,DehYBE} and references therein). Our aim here is to recover plactic monoids as the reduced structure monoids of row/column braidings, and to apply to them general results on structure monoids, especially on their cohomology.

The languages of (reduced) normal words and (reduced) structure monoids turn out to be equivalent:

\begin{theorem}[\cite{LebedIdempot}]\label{T:NormalVsStrMon}
\begin{enumerate}
\item The structure monoid $\Mon(X,\sigma)$ of a braided set $(X,\sigma)$ is braided commutative, with the braiding~$\osigmas$ induced by the braiding~$\osigma$ on~$X^*$.
\item If~$\sigma$ is idempotent, then the tautological map $\Norm(X,\sigma) \to \Mon(X,\sigma)$ is bijective. Its inverse sends an $m \in \Mon(X,\sigma)$ represented by a word $\bw \in X^{k}$ to ${\Delta_k \bw}$, which turns out to be the unique normal form of~$m$:
\begin{align*}
\Norm(X,\sigma) &\longleftrightarrow \Mon(X,\sigma),\\
\operatorname{Taut} \,:\, \bw &\longmapsto [\bw],\\
{\Delta_k \bw} &\longmapsfrom [\bw] \,:\, \operatorname{NForm}.
\end{align*}
Here the longest element $\Delta_k = b_1 (b_2b_1) \cdots (b_{k-1} \cdots b_2b_1)$ of~$C_k$ acts on~$X^{k}$ via~\eqref{E:BnActs}.
\item Under the above bijection, the braiding~$\osigmas$ on $\Mon(X,\sigma)$ corresponds to the restriction~$\osigman$ of~$\osigma$ to $\Norm(X,\sigma)$. Further, the concatenation product on $\Mon(X,\sigma)$ corresponds to a product~$\ast$ on $\Norm(X,\sigma)$, which can be computed as the braiding~$\osigman$ followed by concatenation.
\item For a PUIBS $(X,\sigma,1)$, the above bijection induces a bijection $\oNorm(X,\sigma,1) \leftrightarrow \oMon(X,\sigma,1)$. Its inverse computes the (unique) reduced normal form.
\end{enumerate}
\end{theorem}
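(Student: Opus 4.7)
The plan is to establish the four parts in sequence, with the heart of the matter lying in part~(2), where idempotency of~$\sigma$ combined with the combinatorics of the Coxeter monoid~$C_k$ forces normal forms to exist and be unique.

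For part~(1), I would first check that the word braiding~$\osigma$ on~$X^*$ descends to~$\Mon(X,\sigma)$. This amounts to verifying that the generating relations $xy = y'x'$ are $\osigma$-invariant, which follows from the compatibility of~$\osigma$ with concatenation together with the YBE. The YBE for the induced braiding~$\osigmas$ is then automatic. Among the braided commutativity axioms~\eqref{E:BrMonoid}--\eqref{E:BrMonoid'''}, axiom~\eqref{E:BrMonoid''} is exactly the defining relation of~$\Mon(X,\sigma)$; the remaining three unravel to straightforward identities for~$\osigma$ acting on two-letter or unit-letter words.

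Part~(2) is the crucial step. The starting point is the absorbing identity $b_i \Delta_k = \Delta_k b_i = \Delta_k$ in~$C_k$, which follows by induction from the reduced expression $\Delta_k = b_1 (b_2 b_1) \cdots (b_{k-1} \cdots b_1)$ and the Coxeter relations. I would then argue in three steps. First, \emph{existence}: show $\Delta_k \bw \in \Norm(X,\sigma)$ by induction on~$k$, using the decomposition $\Delta_k = \Delta_{k-1} (b_{k-1} \cdots b_1)$, which pushes a single letter leftward through a length-$(k-1)$ normal word. Second, \emph{invariance}: if $\bw$ and~$\bww$ differ by one application of a defining relation of~$\Mon$, then $\bww = b_i \bw$ for some~$i$, so $\Delta_k \bww = \Delta_k b_i \bw = \Delta_k \bw$ by absorption; hence $\bw \mapsto \Delta_k \bw$ descends to a well-defined map $\Mon(X,\sigma) \to \Norm(X,\sigma)$. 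Third, \emph{uniqueness}: if $\bw$ is already normal then every $b_i$ fixes it by definition of~$\Norm$, so $\Delta_k \bw = \bw$, proving both that the map is a retraction of the tautological inclusion and that normal representatives of a given element are unique.

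For part~(3), transport of the braiding is immediate from compatibility of~$\osigma$ with the quotient $X^* \twoheadrightarrow \Mon(X,\sigma)$ combined with the tautological bijection of part~(2). For the product, take normal words $\bw \in X^{j}$, $\bww \in X^{\ell}$; the concatenation $\bw\bww$ represents their product in~$\Mon(X,\sigma)$, whose normal form is $\Delta_{j+\ell}(\bw\bww)$. A standard decomposition of~$\Delta_{j+\ell}$ in~$C_{j+\ell}$ factors it as a ``shuffle'' element sandwiched between copies of~$\Delta_j$ and~$\Delta_\ell$; by normality of~$\bw$ and~$\bww$ the outer pieces act trivially, leaving exactly $\osigman(\bw,\bww)$ followed by concatenation. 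Finally, part~(4): the PUIBS axioms guarantee that erasing the pseudo-unit preserves normality and commutes with the braid action in the required sense, so the erasure map descends to a surjection $\Norm(X,\sigma) \twoheadrightarrow \oNorm(X,\sigma,1)$ compatible with the identification $1 \sim \emptyw$ defining~$\oMon(X,\sigma,1)$, and the bijection of~(2) restricts to the claimed one.

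The principal obstacle is the existence part of~(2): showing that $\Delta_k$ fully normalizes an arbitrary word requires careful bookkeeping with the braid relations and makes essential use of the idempotency hypothesis $\sigma\sigma = \sigma$, without which $\Delta_k$ would merely be the longest element of a symmetric group rather than a genuine absorber in~$C_k$.
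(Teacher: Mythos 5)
Note first that the paper gives no proof of this statement: it is imported wholesale from~\cite{LebedIdempot}, so your argument can only be judged on its own terms. Your route is the expected one (absorption $b_i\Delta_k=\Delta_k b_i=\Delta_k$ in the Coxeter monoid for well-definedness and uniqueness of normal forms, a length-additive factorization of $\Delta_{j+\ell}$ for the product in part~(3)), and in outline it is sound. But the step you yourself call the principal obstacle --- that $\Delta_k\bw$ is actually normal --- is never argued, and it is the heart of part~(2) (part~(3) also silently relies on it, since the braided concatenation of two normal words must again be normal). Fortunately your induction closes it quickly, and you should say how: writing the inductively normalized prefix as $y_1\ldots y_{k-1}$ and sweeping the last letter $z_k$ leftward via $\sigma(y_i,z_{i+1})=(z_i,y_i')$, the new leftmost pair is fixed because $(z_1,y_1')=\sigma(y_1,z_2)$ and $\sigma\sigma=\sigma$, while for each interior pair the YBE evaluated on $(y_i,y_{i+1},z_{i+2})$ --- whose left-hand side starts with the $\sigma$-fixed pair $(y_i,y_{i+1})$ --- returns $(z_i,y_i',y_{i+1}')$ on one side and $(z_i,\sigma(y_i',y_{i+1}'))$ on the other, forcing $\sigma(y_i',y_{i+1}')=(y_i',y_{i+1}')$. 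This also corrects a conceptual slip in your last sentence: idempotency is \emph{not} what makes $\Delta_k$ absorbing --- absorption is a fact about $C_k$ itself --- it is what makes the $B_k^+$-action factor through $C_k$ and what fixes the single new pair above.

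Two further points. Part~(4) is asserted rather than proved: beyond axiom~(ii) (erasing $1$ preserves normality), you must show that two reduced normal words representing the same element of $\oMon(X,\sigma,1)$ coincide, i.e.\ that the extra congruence identifying $1$ with the empty word collapses nothing beyond padding by $1$'s; this is where axiom~(i), letting occurrences of $1$ slide past other letters, does its work, and it deserves an actual argument. In part~(3), the factorization you want is not a ``sandwich'' but the two-factor, length-additive identity $\Delta_{j+\ell}=T\,(\Delta_j\times\Delta_\ell)$ in $C_{j+\ell}$, where $T$ is the lift of the block transposition of the first $j$ past the last $\ell$ strands; with $\bw,\bww$ normal the parabolic factor acts trivially and $T$ acts as $\osigma$ on the pair of blocks, which is exactly the claimed description of~$\ast$. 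With these repairs your proposal becomes a complete proof along what is surely the same route as the cited source.
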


As a result of the last assertion, the concatenation product on $\oMon(X,\sigma,1)$ pulls back to an associative product on $\oNorm(X,\sigma,1)$, still denoted by~$\ast$. Explicitly, for reduced normal words $\boldsymbol{v},\bw$ of length~$n$ and~$m$, one has $\boldsymbol{v} \ast \bw = \overline{\Delta_{n+m}(\boldsymbol{vw})}$. The empty word is a unit for~$\ast$. In general the braiding~$\osigman$ does not restrict to $\Norm(X,\sigma,1)$.

\section{Two braidings on the plactic monoid}\label{S:PlacticBraiding}

We now return to Young tableaux and the plactic monoid on an ordered alphabet~$A$. 

Recall the row set $\Rows_A = \Roww_A \sqcup \{\emptyr\}$ including the empty row~$\emptyr$.  We will often switch between its interpretations inside the word monoid $A^*$, and inside $\YT_A$ (as the subset of one- or zero-row tableaux). Our aim is to define a braiding~$\sigma_R$ on~$\Rows_A$. Take $r_1, r_2 \in \Rows_A \hookrightarrow \YT_A$. According to Lemma~\ref{L:CountRowCol}, there are two possibilities for the tableau $r_1 \ast r_2$:
\begin{enumerate}
\item it can have two non-empty rows $r'_2,r'_1$, in which case we put $\sigma_R(r_1, r_2)=(r'_2,r'_1)$;
\item it can be a single row---the concatenation $r_1r_2$, which can be empty; we then declare $\sigma_R (r_1, r_2)=(\emptyr,r_1r_2)$.
\end{enumerate}
One obtains an idempotent operator on $\Rows_A^{\times 2}$. There is an analogous operator~$\sigma_C$ on $\Cols_A^{\times 2}$. Both somewhat ignore the empty row/column:
\begin{align}\label{E:Empty}
& \sigma_R(r,\emptyr)=\sigma_R(\emptyr,r)=(\emptyr,r),&
& \sigma_C(c,\emptyc)=\sigma_C(\emptyc,c)=(c,\emptyc).
\end{align}

\begin{proposition}\label{PR:RowBr}
The operators~$\sigma_R$ and~$\sigma_C$ above are idempotent braidings on the row set $\Rows_A$ and the column set $\Cols_A$ respectively.
\end{proposition}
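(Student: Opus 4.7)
The idempotence of $\sigma_R$ is immediate from $\T \circ \R = \Id_{\YT_A}$ (Lemma~\ref{L:Insertion map}). If $\sigma_R(r_1, r_2) = (r_2', r_1')$, then $(r_2', r_1')$ is by construction the top-to-bottom row decomposition of the Young tableau $T := r_1 \ast r_2$, so $r_2' r_1' = \R(T)$. Reapplying $\sigma_R$ amounts to computing $r_2' \ast r_1' = \T(\R(T)) = T$ and reading off its rows, which returns $(r_2', r_1')$. The idempotence of $\sigma_C$ is dual, using $\T \circ \C = \Id_{\YT_A}$.

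\textbf{Setup for the YBE.} For equation~\eqref{E:YBE}, the plan is to show that both sides, applied to a triple $(r_1, r_2, r_3) \in \Rows_A^{\times 3}$, yield the top-to-bottom row reading of $T := r_1 \ast r_2 \ast r_3$, padded with copies of $\emptyr$ on top if $T$ has fewer than three rows. Three preliminary observations set this up. First, each $\sigma_R$-swap preserves the $\ast$-product, since $\sigma_R(r, s) = (s', r')$ was built so that $r \ast s = s' \ast r'$; hence both compositions in~\eqref{E:YBE} yield triples $\ast$-multiplying to~$T$. Second, by Greene's theorem (recalled in the introduction), the total length of the bottom three rows of $T$ is at least the maximum length of a subword of $\R(T)$ expressible as a shuffle of three non-decreasing words; but $r_1, r_2, r_3$ are themselves such a shuffle of total length $|r_1| + |r_2| + |r_3|$, so $T$ has at most three rows, and its padded top-to-bottom reading lives in $\Rows_A^{\times 3}$. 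Third, a triple $(s_1, s_2, s_3)$ with $\sigma_R(s_1, s_2) = (s_1, s_2)$ and $\sigma_R(s_2, s_3) = (s_2, s_3)$ represents a Young tableau (read top-to-bottom with empty rows on top); by uniqueness of the row decomposition there is at most one such simultaneously-normal triple with a prescribed $\ast$-product.

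\textbf{Main obstacle.} What remains, and constitutes the core difficulty, is to verify that each side of~\eqref{E:YBE} actually produces a \emph{simultaneously}-normal triple, since its final $\sigma_R$-step only normalizes one consecutive pair. I would handle this by reinterpreting $(\sigma_R \times \Id)(\Id \times \sigma_R)(\sigma_R \times \Id)$ on $(r_1, r_2, r_3)$ as the computation of the row decomposition of $(r_1 \ast r_2) \ast r_3$: the first $\sigma_R \times \Id$ extracts the at-most-two rows of $r_1 \ast r_2$ (Lemma~\ref{L:CountRowCol}), the middle $\Id \times \sigma_R$ Schensted-inserts $r_3$ into the bottom row of that tableau, and the final $\sigma_R \times \Id$ absorbs the bumped letters into the top row. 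The outcome is precisely the (padded) row reading of $(r_1 \ast r_2) \ast r_3$, which is simultaneously normal by construction. The symmetric composition $(\Id \times \sigma_R)(\sigma_R \times \Id)(\Id \times \sigma_R)$ is analyzed the same way via the left Schensted algorithm on $r_1 \ast (r_2 \ast r_3)$. Associativity of $\ast$ identifies the two outputs, establishing~\eqref{E:YBE} for $\sigma_R$. The case of $\sigma_C$ follows by the row-column duality of Remark~\ref{R:ColRowDuality}.
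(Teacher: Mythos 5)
Your idempotence argument is fine, and your reinterpretation of the left-hand side is sound and genuinely cleaner than a letter-by-letter check: the composition $(\sigma_R\times\Id)(\Id\times\sigma_R)(\sigma_R\times\Id)$ really does retrace the right Schensted insertion of $r_3$ into $r_1\ast r_2$ (by Lemma~\ref{L:CountRowCol} the bumped letters form a row and, by the recursive definition of the insertion, they are fed to the top row independently of when the bottom row is settled), so its output is the padded row reading of $(r_1\ast r_2)\ast r_3$ and in particular is simultaneously normal. The genuine gap is the right-hand side. The claim that $(\Id\times\sigma_R)(\sigma_R\times\Id)(\Id\times\sigma_R)$ ``is analyzed the same way via the left Schensted algorithm'' does not go through: the left insertion recurses over \emph{columns}, not rows, so its stages do not match the three row-pair products your composition actually performs, namely $r_2\ast r_3$ (rows $r'_3,r'_2$), then $r_1\ast r'_3$ (rows $r''_3,r'_1$), then $r'_1\ast r'_2$ (rows $r''_2,r''_1$).

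Concretely, writing $r_1\ast r_2\ast r_3=(r_1\ast r'_3)\ast r'_2$ and applying your own bottom-row decomposition of the right insertion to this last product, the tableau consists of the rows of $r''_3\ast r''_2$ placed over $r''_1$, whereas the right-hand side of~\eqref{E:YBE} outputs $(r''_3,r''_2,r''_1)$. These coincide exactly when the pair $(r''_3,r''_2)$ is already normal, i.e.\ $r''_3\underset{R}{\succ}r''_2$ or $r''_3=\emptyr$; nothing in your argument establishes this, since the final $\sigma_R$ only normalizes $(r''_2,r''_1)$, while $r''_3$ and $r''_2$ come out of two different insertions. This is precisely the inequality the paper proves by hand, tracking the letters of $r_1$ chased by $r'_3$ against the letters of $r'_1$ chased by the leading letters of $r'_2$ and using $r'_3\underset{R}{\succ}r'_2$. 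Nor can you appeal to symmetry: the duality of Remark~\ref{R:ColRowDuality} transposes tableaux and exchanges the row braiding with a column braiding for the transposed data; it does not exchange the two sides of~\eqref{E:YBE} for the same $\sigma_R$. Without an argument for $r''_3\underset{R}{\succ}r''_2$, your criterion ``at most one simultaneously normal triple with a given $\ast$-product'' cannot be applied to the right-hand side, and the proof of the YBE is incomplete. (Your use of Greene's theorem is correct but unnecessary: that $r_1\ast r_2\ast r_3$ has at most three rows already follows from iterating Lemma~\ref{L:CountRowCol}.)
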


\begin{proof}
One should check the YBE~\eqref{E:YBE} on $\Rows_A^{\times 3}$ and $\Cols_A^{\times 3}$. Take three rows $r_1,r_2,r_3$. We will prove that, evaluated on $(r_1,r_2,r_3)$, both sides of the YBE for~$\sigma_R$ yield the rows of $r_1 \ast r_2 \ast r_3$ (read from top to bottom), preceded with some empty rows~$\emptyr$ if necessary.

\textbf{Left-hand side:} Let us show that the three (possibly empty) rows of $(\sigma_R \times \Id) (\Id \times \sigma_R)(r'_2,r'_1,r_3)$ form a Young tableau, knowing that the rows $(r'_2,r'_1) = \sigma_R (r_1,r_2)$ form one. Write the rows as words in~$A$: $r'_2 = z_1 \ldots z_m$, $r'_1 = y_1 \ldots y_l$, $r_3 = x_1 \ldots x_k$ (Figure~\ref{P:YBERows}). Put $\sigma_R (r'_1,r_3) = (r'_3,r''_1)$, $\sigma_R (r'_2,r'_3) = (r''_3,r''_2)$. Extend the row ordering $\underset{R}{\succ}$ from~\eqref{E:RowOrder} to empty rows by declaring $\emptyr \underset{R}{\succ} r$ for any row~$r$. The property $r''_3 \underset{R}{\succ} r''_2$ being automatic, we are left with $r''_2$ and~$r''_1$. As explained in the proof of Lemma~\ref{L:CountRowCol}, $r''_1$ is obtained from~$r'_1$ by replacing some letters $y_{i_1}, \ldots, y_{i_p}$ with $x_1, \ldots, x_p$ (where $x_j < y_{i_j}$, $x_j \ge y_{i_j-1}$) and adding the remaining letters of~$r_3$ to the end. The chased letters $y_{i_1} \ldots y_{i_p}$ assemble into the row~$r'_3$. Since $m \le l$, $x_j < y_{i_j} < z_{i_j}$, and $y_i < z_i$ for the non-chased letters~$y_i$, one has $r'_2 \underset{R}{\succ} r''_1$. Now, the row~$r''_2$ is obtained from~$r'_2$ by replacing some of its letters~$z_{h_s}$ with~$y_{i_s}$, and adding the remaining letters~$y_{i_t}$ of~$r'_3$ to the end, to positions~$h_t$. The relations $y_{i} < z_{i}$ yield $h_j \le i_j$. But then $y'_{h_j} \le y'_{i_j} = x_j < y_{i_j}$, where $y'_1y'_2\ldots$ is the row~$r''_1$. For the non-chased letters~$z_i$, one has $y'_i < z_i$ because of $r'_2 \underset{R}{\succ} r''_1$. Put together, this yields $r''_2 \underset{R}{\succ} r''_1$. Hence the rows $r''_3,r''_2, r''_1$ possibly include some empty ones, followed by the rows of a Young tableau~$T$. In particular, $\T(r''_3 r''_2 r''_1)=T$. Since at the level of~$A^*$ the braiding~$\sigma_R$ is the composition~$\R \T$, Lemma~\ref{L:Insertion map} implies $r_1r_2r_3 \sim r''_3 r''_2 r''_1$. Theorem~\ref{T:Knuth} then yields $r_1 \ast r_2 \ast r_3 = \T(r_1r_2r_3) = \T(r''_3 r''_2 r''_1)=T$, as desired.

\ytableausetup{boxsize=1.4em}
\begin{figure}[!h]
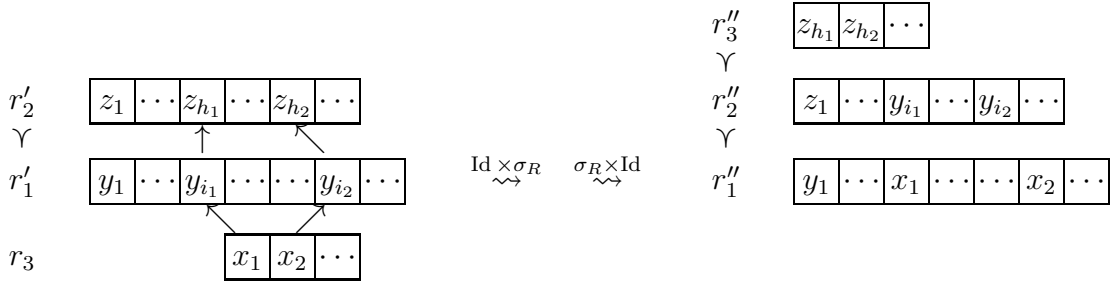
\centering 
\begin{align*}
& && \ytableaushort{{\none[r''_3]}\none{z_{h_1}}{z_{h_2}}{\cdots}}\\[-7pt]
& && \ytableaushort{{\none[\curlyvee]}}\\[-7pt]
&\ytableaushort{{\none[r'_2]}\none{z_1}{\cdots}{z_{h_1}}{\cdots}{z_{h_2}}{\cdots}} && \ytableaushort{{\none[r''_2]}\none{z_1}{\cdots}{y_{i_1}}{\cdots}{y_{i_2}}{\cdots}}\\[-7pt]
&\ytableaushort{{\none[\curlyvee]}\none\none\none{\none[\uparrow]}\none}\hspace*{7pt}
\ytableaushort{{\none[\nwarrow]}} && \ytableaushort{{\none[\curlyvee]}}\\[-7pt]
&\ytableaushort{{\none[r'_1]}\none{y_1}{\cdots}{y_{i_1}}{\cdots}{\cdots}{y_{i_2}}{\cdots}} & \quad \overset{\Id \times \sigma_R}{\leadsto} \quad \overset{\sigma_R \times \Id}{\leadsto} \qquad& 
\ytableaushort{{\none[r''_1]}\none{y_1}{\cdots}{x_1}{\cdots}{\cdots}{x_2}{\cdots}}\\[-7pt]
&\hspace*{7pt}\ytableaushort{\none\none\none\none{\none[\nwarrow]}}\hspace*{17pt}
\ytableaushort{{\none[\nearrow]}} &&\\[-7pt]
&\ytableaushort{{\none[r_3]}\none\none\none\none{x_1}{x_2}{\cdots}} &&
\end{align*}
\caption{The left-hand side of the YBE for three rows}\label{P:YBERows}
\end{figure}
\ytableausetup{boxsize=1.15em}

\textbf{Right-hand side:} Put $(r'_3,r'_2) = \sigma_R (r_2,r_3)$, $(r''_3,r'_1) = \sigma_R (r_1,r'_3)$, $(r''_2,r''_1) = \sigma_R (r'_1,r'_2)$. One has to check the relation $r''_3 \underset{R}{\succ} r''_2$; the reasoning can then be completed as for the left-hand side. The word~$r''_3$ is formed by the letters $z_{i_1},\ldots, z_{i_k}$ of~$r_1$ chased by the letters $y_1,\ldots,y_k$ of~$r'_3$. Since $r'_3 \underset{R}{\succ} r'_2$, the first letters $x_1,\ldots,x_k$ of~$r'_2$ satisfy $x_j < y_j$, and thus in the computation of $\sigma_R (r'_1,r'_2)$ they chase the letters $y'_{h_1},\ldots, y'_{h_k}$ of $r'_1 = y'_1y'_2\ldots$ with $h_j \le i_j$, possibly plus some extra letters on the right. The inequalities $y'_{h_j} \le y'_{i_j} =y_j < z_{i_j}$ then yield $r''_3 \underset{R}{\succ} r''_2$.

The column case can be treated analogously. Alternatively, one can use the duality argument from Remark~\ref{R:ColRowDuality}.
\end{proof}

The explicit description of the $\ast$-product from the proof of Lemma~\ref{L:CountRowCol} yields a useful comparison of rows/columns before and after the braiding procedure:

\begin{observation}\label{O:Comparison}
\begin{enumerate}
\item Take $r_1,r_2 \in \Rows_A$, and put $\sigma_R(r_1,r_2) = (r_3,r_4)$. Then $r_3$ is a subrow of~$r_1$, $r_2$ is a subrow of~$r_4$, and one has $r_3 \underset{R}{\succ} r_4$, $r_3 \underset{R}{\succ} r_2$. Moreover, $r_i \underset{R}{\succcurlyeq} r_j$ holds when $i=3$ or $j=4$, where the relation~$\underset{R}{\succcurlyeq}$ is defined in the obvious way.
\item Take $c_1,c_2 \in \Cols_A$, and put $\sigma_C(c_1,c_2) = (c_3,c_4)$. Then $c_1$ is a subcolumn of~$c_3$, $c_4$ is a subcolumn of~$c_2$, and one has $c_i \underset{C}{\preccurlyeq} c_j$ whenever $i=3$ or $j=4$.
\end{enumerate}
\end{observation}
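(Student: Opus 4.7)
I will unpack the explicit description of $r_1 \ast r_2$ given in the proof of Lemma~\ref{L:CountRowCol} and then read each assertion off directly. Writing $r_1 = y_1 \ldots y_l$ and $r_2 = x_1 \ldots x_k$, the right Schensted algorithm chases a strictly increasing sequence of positions $k(1) < \cdots < k(p)$ from $r_1$, where $y_{k(i)}$ is the leftmost remaining entry of the first row strictly exceeding $x_i$. The output satisfies $r_3 = y_{k(1)} \ldots y_{k(p)}$, while $r_4$ is obtained from $r_1$ by replacing each $y_{k(i)}$ with $x_i$ and appending $x_{p+1} \ldots x_k$ on the right.

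The subrow statements are then immediate. The strict inequality $r_3 \underset{R}{\succ} r_4$ is the two-row Young tableau property of $r_1 \ast r_2$, and $r_3 \underset{R}{\succ} r_2$ follows from $p \le k$ together with $y_{k(i)} > x_i$. For the weak comparisons $r_i \underset{R}{\succcurlyeq} r_j$ with $i = 3$ or $j = 4$, the subcases $r_3 \underset{R}{\succcurlyeq} r_2$ and $r_3 \underset{R}{\succcurlyeq} r_4$ come for free from the strict ones. The remaining non-trivial cases are $r_3 \underset{R}{\succcurlyeq} r_1$, $r_1 \underset{R}{\succcurlyeq} r_4$, and $r_2 \underset{R}{\succcurlyeq} r_4$. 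The first uses $k(i) \ge i$ and the non-decreasingness of $r_1$, giving $r_3[i] = y_{k(i)} \ge y_i = r_1[i]$. The second compares position by position on $[1,l]$: $r_4[i]$ is either $y_i = r_1[i]$ or some $x_j$ with $i = k(j)$, in which case $r_1[i] = y_{k(j)} > x_j$.

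The delicate case, and the main obstacle, is $r_2 \underset{R}{\succcurlyeq} r_4$. I would split position $i \in [1,k]$ into three cases: (a) $i > l$, where $r_4[i] = x_{i - l + p} \le x_i$ since $p \le l$; (b) $i \le l$ chased, so $i = k(j)$, where $j \le i$ forces $x_j \le x_i$; (c) $i \le l$ not chased. For (c) I would take the smallest $s$ with $k(s) > i$, or $s = p+1$ if no chase lies to the right of $i$. At the step processing $x_s$, position $i$ still holds the value $y_i$, and the leftmost-rule together with the definition of $s$ yields $y_i \le x_s$. The bound $s \le i$ follows from $s - 1 \le k(s-1) < i$ (with the convention $k(0) = 0$), so $x_s \le x_i$ and therefore $r_4[i] = y_i \le x_i$, as desired.

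The column statement follows either by repeating the argument with the left Schensted algorithm, or by invoking the column-row duality of Remark~\ref{R:ColRowDuality}.
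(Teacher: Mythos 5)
Your argument is correct and takes essentially the same route as the paper, which simply reads the observation off the explicit bumping description of $r_1 \ast r_2$ from the proof of Lemma~\ref{L:CountRowCol}; you flesh out the details, and the only genuinely delicate point, $r_2 \underset{R}{\succcurlyeq} r_4$ at unchased positions, is handled correctly via the leftmost-bump rule and the bound $s \le i$. Deferring the column case to the left insertion algorithm or to the duality of Remark~\ref{R:ColRowDuality} matches the paper's own level of detail.
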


The column braiding has one more useful elementary property, generalizing~\eqref{E:Empty}:

\begin{observation}\label{O:SubCol}
Let $c_1$ be a subcolumn of~$c_2$. Then one has
\[\sigma_C(c_1,c_2) = \sigma_C(c_2,c_1) = (c_2,c_1).\]
Even better: Observation~\ref{O:Comparison} implies that $\sigma_C(c_1,c_2)$ is of the form $(c_2,c'_1)$ or $(c'_2,c_1)$ if and only if $c_1$ is a subcolumn of~$c_2$.
\end{observation}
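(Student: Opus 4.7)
Write $c_2 = x_1 x_2 \cdots x_t$ read top to bottom (so $x_1 > x_2 > \cdots > x_t$), and $c_1 = x_{i_1} x_{i_2} \cdots x_{i_s}$ with $1 \le i_1 < i_2 < \cdots < i_s \le t$. Let $T^*$ be the two-column diagram obtained by placing $c_2$ on the left and $c_1$ on the right, bottom-aligned. The plan is to prove that $T^*$ is a valid Young tableau and that $c_1 \ast c_2 = c_2 \ast c_1 = T^*$; the equality $\sigma_C(c_1,c_2) = \sigma_C(c_2,c_1) = (c_2,c_1)$ then follows from the very definition of $\sigma_C$ via Lemma~\ref{L:CountRowCol}. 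Validity of $T^*$ amounts by~\eqref{E:ColOrder} to $c_2 \underset{C}{\preccurlyeq} c_1$, which using the strict decreasingness of $c_2$ reduces to $i_j \le t-s+j$ for all $j$; this is forced by $i_1 < \cdots < i_s \le t$.

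The equality $c_2 \ast c_1 = T^*$ is immediate: by construction, $c_2 c_1 = \C(T^*)$, so using that $\T \colon A^* \to (\YT_A,\ast)$ is a monoid morphism (Theorem~\ref{T:Knuth}) and that $\T \circ \C = \Id$ (Lemma~\ref{L:Insertion map}), one obtains $c_2 \ast c_1 = \T(c_2) \ast \T(c_1) = \T(c_2 c_1) = \T(\C(T^*)) = T^*$. The substantive step is $c_1 \ast c_2 = T^*$, which I would establish by induction on $|c_1|$ using the \emph{left} Schensted algorithm. Writing $c_1 = x_{i_1} \cdot c'_1$ with $c'_1 = x_{i_2} \cdots x_{i_s}$ (still a subcolumn of $c_2$), the inductive hypothesis yields $c'_1 \ast c_2 = T'$ whose columns are $c_2$ (left) and $c'_1$ (right). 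Now left-insert $x_{i_1}$ into $T'$: since $x_{i_1}$ already appears in the first column $c_2$, the smallest entry of that column which is $\ge x_{i_1}$ is $x_{i_1}$ itself, so $c_2$ is left unchanged and a copy of $x_{i_1}$ is pushed onto the residual tableau (which consists only of the column $c'_1$); the strict inequality $x_{i_1} > x_{i_2}$ coming from $i_1 < i_2$ then places this $x_{i_1}$ on top of $c'_1$, reconstituting precisely the column $c_1$. Hence $c_1 \ast c_2 = T^*$, as desired.

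For the ``even better'' assertion, the forward direction is immediate since $(c_2, c_1)$ is simultaneously of the forms $(c_2, c'_1)$ (with $c'_1 = c_1$) and $(c'_2, c_1)$ (with $c'_2 = c_2$). For the converse, Observation~\ref{O:Comparison}(2) asserts that if $\sigma_C(c_1, c_2) = (c_3, c_4)$ then $c_1$ is a subcolumn of $c_3$ and $c_4$ is a subcolumn of $c_2$; so whether the output is of the form $(c_2, c'_1)$ or $(c'_2, c_1)$, one concludes directly that $c_1$ is a subcolumn of $c_2$. The main obstacle is the bookkeeping in the inductive step above, but the subcolumn hypothesis renders every left-insertion essentially trivial: each element of $c_1$ finds itself already sitting in $c_2$ and is bumped out unchanged, to land atop the growing right column.
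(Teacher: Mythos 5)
Your proposal is correct and takes essentially the approach the paper intends: the Observation is stated as an immediate consequence of the explicit Schensted/insertion description of the $\ast$-product (as used for Lemma~\ref{L:CountRowCol} and Observation~\ref{O:Comparison}), which is exactly what your induction via left insertion makes explicit, and your converse via Observation~\ref{O:Comparison} is the argument the paper itself indicates. The only stylistic difference is your neat shortcut $c_2 \ast c_1 = \T(\C(T^*)) = T^*$ using $\T\C = \Id$, which is a clean way to dispatch that half of the computation.
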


There is no hope for a similar characterization for subrows. In fact, even the evaluation of~$\sigma_R$ on the diagonal $(r_1,r_1)$ is rather involved.

These observations suggest seeing $\oMon(\Cols_A,\sigma_C,\emptyc)$ and $\oMon(\Rows_A,\sigma_R,\emptyr)$ as some sort of \emph{``idempotent'' monoids of $I$-type}; cf. \cite{GISkew,GIBergh} for a more classical ``involutive'' version of this notion.

\begin{example}
For the two-element alphabet $A = \{1,2\}$, with $1 < 2$, there are only four columns: $\emptyc, \ytableaushort{1}, \ytableaushort{2}, \ytableaushort{2,1}$. For~$\sigma_C$, the only values not given by Observation~\ref{O:SubCol} are
\begin{align*}
\sigma_C(\ytableaushort{1},\ytableaushort{2}) &= (\ytableaushort{1},\ytableaushort{2}), & \sigma_C(\ytableaushort{2},\ytableaushort{1}) = (\ytableaushort{2,1},\emptyc).
\end{align*}
The rows are in bijection with $\NN_0^{\times 2}$: the couple $(k,l)$ corresponds to $k$ ones followed by $l$ twos. The braiding~$\sigma_R$ then reads
\begin{align*}
\sigma_R ((k_1,l_1),(k_2,l_2)) &= ((0, \min\{l_1,k_2\}),(k_1 + k_2, l_1 + l_2 -\min\{l_1,k_2\})).
\end{align*}
\end{example}

The row and column braidings are far from being invertible. However, a weak form of invertibility does hold for them:

\begin{observation}\label{O:WeakInvert}
Put $\sigma (a_1,a_2) = (a_3,a_4)$, where $\sigma$ is either $\sigma_R$ or $\sigma_C$, and the $a_i$ are either rows or columns. Then any three of the $a_i$s determine the remaining one. In the row case, $a_1$ and $a_4$ suffice to recover $a_2$ and $a_3$. In the column case, the only couple that determines all the $a_i$s is $(a_1,a_2)$.
\end{observation}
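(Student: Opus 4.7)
I split the statement into three claims and handle each in turn.

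\emph{Any three entries determine the fourth.} Since the Knuth relations~\eqref{E:Knuth} preserve the multiset of letters, the $\ast$-product does too, and hence so do $\sigma_R$ and $\sigma_C$: one has $a_1 \sqcup a_2 = a_3 \sqcup a_4$ as multisets over $A$. A row is characterised by its letter multiset (arranged non-decreasingly) and a column by its letter set (arranged strictly decreasingly, in particular with no repeats). So fixing any three of the $a_i$ forces the multiset, and therefore the word, of the fourth.

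\emph{Row case: $(r_1,r_4)$ recovers $r_2,r_3$.} I will re-open the Schensted bookkeeping from the proof of Lemma~\ref{L:CountRowCol}. Writing $r_1 = y_1 \cdots y_l$ and $r_2 = x_1 \cdots x_k$ and letting $k(1) < \cdots < k(p)$ be the positions of $r_1$ chased by the letters of $r_2$, the algorithm produces $r_4[j] = y_j$ at unchased positions $j \le l$, $r_4[j] = x_i < y_j$ at chased positions $j = k(i)$, and the tail $r_4[l+1], \ldots, r_4[n]$ (with $n = |r_4|$) records the non-chasing suffix $x_{p+1}\cdots x_k$. The linchpin is the inequality $r_4[j] \le r_1[j]$ for every $j \le l$, with strict inequality exactly at chased positions; this lets me recover the chased-position set from $(r_1, r_4)$ by entry-wise comparison. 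Then $r_3 = r_1[k(1)]\cdots r_1[k(p)]$ and $r_2 = r_4[k(1)]\cdots r_4[k(p)] \cdot r_4[l+1]\cdots r_4[n]$ are forced.

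\emph{Column case: only $(c_1,c_2)$ suffices.} I will rule out each of the five remaining pairs by an explicit counterexample on $A = \{1,2,3\}$. Observation~\ref{O:SubCol} and one-step insertion yield the table
\begin{align*}
\sigma_C(\ytableaushort{2}, \ytableaushort{2}) &= (\ytableaushort{2}, \ytableaushort{2}), &
\sigma_C(\ytableaushort{2}, \ytableaushort{3}) &= (\ytableaushort{2}, \ytableaushort{3}), &
\sigma_C(\ytableaushort{2}, \ytableaushort{3, 2}) &= (\ytableaushort{3, 2}, \ytableaushort{2}), \\
\sigma_C(\ytableaushort{3}, \ytableaushort{2}) &= (\ytableaushort{3, 2}, \emptyc), &
\sigma_C(\ytableaushort{3, 2}, \ytableaushort{2}) &= (\ytableaushort{3, 2}, \ytableaushort{2}), &
\sigma_C(\ytableaushort{3, 2}, \emptyc) &= (\ytableaushort{3, 2}, \emptyc).
\end{align*}
Comparing these six outputs supplies two distinct inputs that coincide on $(c_1,c_3)$ (entries 1 and 2), on $(c_1,c_4)$ (entries 1 and 3), on $(c_2,c_3)$ (entries 4 and 5), on $(c_2,c_4)$ (entries 1 and 5), and on $(c_3,c_4)$ (entries 4 and 6). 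The main technical step is the Schensted bookkeeping of the second claim; once the decisive inequality $r_4[j] \le r_1[j]$ is in place, the rest is mechanical.
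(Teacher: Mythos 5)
Your proof is correct, and it takes the route the paper leaves implicit: the Observation is meant to be read off the explicit description of the $\ast$-product given in the proof of Lemma~\ref{L:CountRowCol}, which is exactly the bookkeeping you re-open for the row case, while the small table of $\sigma_C$-values (via Observation~\ref{O:SubCol}, the conventions~\eqref{E:Empty}, and one-step insertion) correctly rules out the five remaining column pairs. Your letter-multiset conservation argument is a clean one-stroke way to settle the ``any three determine the fourth'' claim, and the entrywise comparison $r_4[j]\le r_1[j]$, strict exactly at chased positions, is precisely the point needed for the row-case recovery from $(a_1,a_4)$.
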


Recalling the definition of normal words, one easily describes them for $\sigma = \sigma_R$ or~$\sigma_C$, using the properties~\eqref{E:Empty} of empty rows and columns:
\begin{align}
\Norm(\Rows_A,\sigma_R) & \; \overset{1:1}{\longrightarrow} \; \YT_A \times \NN_{0},\label{E:NormRow}\\ 
(\emptyr^\alpha r_1 \ldots r_k) & \; \longmapsto \; (\, \dotfrac{r_1}{r_k}\, , \alpha \, );\notag\\[3pt] 
\Norm(\Cols_A,\sigma_C) & \; \overset{1:1}{\longrightarrow} \; \YT_A \times \NN_{0},\label{E:NormCol}\\ 
(c_1 \ldots c_k \emptyc^\alpha) & \; \longmapsto \; (\, c_1 \dotfrac{\quad}{\quad} c_k \, , \alpha \, ), \notag
\end{align}
where on the left one has $\emptyr \neq r_1 \underset{R}{\succ} \ldots \underset{R}{\succ} r_k$ and $c_1 \underset{C}{\preccurlyeq} \ldots \underset{C}{\preccurlyeq} c_k \neq \emptyc$ respectively. Working with columns is preferable for certain applications, since for finite~$A$ the column set $\Cols_A$ is of finite size~$2^{|A|}$, while $\Rows_A$ is always infinite.

Theorem~\ref{T:NormalVsStrMon} now provides braided commutative monoid structures on $\YT_A \times \NN_{0}$. 

\begin{notation}
The number of rows and columns in a Young tableau~$T$ is denoted by $\rows(T)$ and $\cols(T)$ respectively.
\end{notation}

\begin{theorem}\label{T:BraidingPlactic}
Take a totally ordered alphabet~$A$. The set $\YT_A \times \NN_{0}$ of $\NN_{0}$-decorated Young tableaux on~$A$ can be seen as a braided commutative monoid in two ways:
\begin{enumerate}
\item The unit is the element $(\emptyset,0)$, the product is given by 
\begin{align*}
(T_1,\alpha_1) \ast_R (T_2,\alpha_2) &= (T_1 \ast T_2,\alpha_1+\alpha_2 + \rows(T_1)+\rows(T_2) - \rows(T_1 \ast T_2)),
\end{align*}
and the braiding $\osigma_R ((T_1,\alpha_1), (T_2,\alpha_2)) = ((T'_2,\alpha'_2), (T'_1,\alpha'_1))$ is uniquely defined by the following conditions:
\begin{itemize}
\item the rows of~$T'_2$ followed by those of~$T'_1$ form the tableau $T_1 \ast T_2$;
\item $\alpha'_1+ \rows(T'_1) = \alpha_1+ \rows(T_1)$, $\alpha'_2+ \rows(T'_2) = \alpha_2+ \rows(T_2)$;
\item $\alpha'_1 > 0$ if and only if $\rows(T_1 \ast T_2) < \alpha_1+ \rows(T_1)$, in which case $T'_2 = \emptyset$.
\end{itemize}
\item The unit is the element $(\emptyset,0)$, the product is given by 
\begin{align*}
(T_1,\alpha_1) \ast_C (T_2,\alpha_2) &= (T_1 \ast T_2,\alpha_1+\alpha_2 + \cols(T_1)+\cols(T_2) - \cols(T_1 \ast T_2)),
\end{align*}
and the braiding $\osigma_C ((T_1,\alpha_1), (T_2,\alpha_2)) = ((T'_2,\alpha'_2), (T'_1,\alpha'_1))$ is uniquely defined by the following conditions:
\begin{itemize}
\item the columns of~$T'_2$ followed by those of~$T'_1$ form the tableau $T_1 \ast T_2$;
\item $\alpha'_1+ \cols(T'_1) = \alpha_1+ \cols(T_1)$, $\alpha'_2+ \cols(T'_2) = \alpha_2+ \cols(T_2)$;
\item $\alpha'_2 > 0$ if and only if $\cols(T_1 \ast T_2) < \alpha_2+ \cols(T_2)$, in which case $T'_1 = \emptyset$.
\end{itemize}
\end{enumerate}
\end{theorem}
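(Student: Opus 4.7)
The plan is to derive the theorem from Theorem~\ref{T:NormalVsStrMon}, applied to the idempotent braided sets $(\Rows_A, \sigma_R)$ and $(\Cols_A, \sigma_C)$ of Proposition~\ref{PR:RowBr}. First one checks that~$\emptyr$ and~$\emptyc$ are pseudo-units: the first defining axiom follows from~\eqref{E:Empty}, and the second from the explicit shapes~\eqref{E:NormRow}--\eqref{E:NormCol} of normal words, which group all empty letters at one end. Theorem~\ref{T:NormalVsStrMon} then provides braided commutative monoid structures on $\Mon(\Rows_A,\sigma_R)$ and $\Mon(\Cols_A,\sigma_C)$, which the bijection $\Mon\leftrightarrow\Norm$ composed with~\eqref{E:NormRow}--\eqref{E:NormCol} transports to $\YT_A\times\NN_0$.

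To obtain the product formula, I would take normal representatives $w_1 = \emptyr^{\alpha_1} r_1\ldots r_k$ and $w_2 = \emptyr^{\alpha_2} s_1\ldots s_l$ of $(T_1,\alpha_1)$ and $(T_2,\alpha_2)$. By Theorem~\ref{T:NormalVsStrMon}(iii), their product is the normal form of $w_1 w_2$. Using~\eqref{E:Empty} one first floats the $\alpha_2$ empties of~$w_2$ to the front; iterated application of~$\sigma_R$ to the non-empty tail $r_1\ldots r_k s_1\ldots s_l$ is, by the very definition of~$\sigma_R$ and Lemmas~\ref{L:Insertion map}--\ref{L:CountRowCol}, a bookkeeping of the Schensted insertions producing $T_1\ast T_2$. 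Its non-empty output is exactly the top-to-bottom row sequence of~$T_1\ast T_2$; each of the $\rows(T_1)+\rows(T_2)-\rows(T_1\ast T_2)$ row-collapses encountered contributes an extra~$\emptyr$ on the left. Reading the resulting normal word via~\eqref{E:NormRow} recovers the announced formula for~$\ast_R$.

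For the braiding, I would compute $\osigma_R(w_1,w_2) = (w'_2,w'_1)$ via the recursive rules~\eqref{E:BrMonoid}--\eqref{E:BrMonoid'}. The key structural claim, to be proved by induction on $|w_1|+|w_2|$, is that the concatenation $w'_2 w'_1$ is itself normal, hence coincides with the normal form of $w_1 w_2$ computed in the previous paragraph. Granted this, the length constraints $|w'_2|=|w_2|$ and $|w'_1|=|w_1|$ determine the decomposition uniquely: splitting the normal form at position $|w_2|$ immediately yields the third bullet's dichotomy, according to whether the cut falls inside the leading $\emptyr$-block (giving $T'_2 = \emptyset$ and possibly $\alpha'_1 > 0$) or within the non-empty tail (giving $\alpha'_1 = 0$). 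This pins down $T'_1, T'_2, \alpha'_1, \alpha'_2$. The column case is treated analogously, or deduced from the row case via the duality of Remark~\ref{R:ColRowDuality}.

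The main obstacle is the induction showing that $w'_2 w'_1$ is normal, which is not automatic for general idempotent braidings. The argument should exploit Observation~\ref{O:Comparison}: each application of~$\sigma_R$ chases rows in a controlled way, preserving the row order~$\underset{R}{\succ}$ (with empties declared maximal), and these inequalities should propagate through the recursive computation to yield normality of the final pair.
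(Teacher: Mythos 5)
Your route is the paper's route: the paper gives no separate proof of this theorem, deducing it directly from Theorem~\ref{T:NormalVsStrMon} applied to the idempotent braided sets of Proposition~\ref{PR:RowBr}, together with the identification \eqref{E:NormRow}--\eqref{E:NormCol} of normal words with $\NN_0$-decorated tableaux; your bookkeeping for $\ast_R$ (empties float to the front by \eqref{E:Empty}, each row-collapse creates one $\emptyr$) and your ``cut the normal form at position $|w_2|$'' argument for the explicit description of $\osigma_R$, using that $\osigma$ maps $X^{\times n}\times X^{\times m}$ to $X^{\times m}\times X^{\times n}$, are exactly the implicit verifications the paper leaves to the reader, and they are correct.

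The one point to fix is your final paragraph: the ``main obstacle'' you single out is not an obstacle at all. The statement that braiding two normal words and then concatenating yields a normal word --- indeed, the normal form of their concatenation --- is precisely the last assertion of Theorem~\ref{T:NormalVsStrMon}(3) (``the concatenation product on $\Mon(X,\sigma)$ corresponds to a product $\ast$ on $\Norm(X,\sigma)$, which can be computed as the braiding $\osigman$ followed by concatenation''), combined with the uniqueness of normal forms from part (2). Note also that part (3) already guarantees that $\osigma$ restricts to $\Norm(X,\sigma)\times\Norm(X,\sigma)$; only the \emph{reduced} normal words fail to be preserved, as the paper remarks. So no induction via Observation~\ref{O:Comparison} is needed: cite the theorem and your proof closes. (Re-proving that fact from scratch would essentially re-run the YBE/normal-form analysis of \cite{LebedIdempot}, which is beside the point here.) Two minor remarks: the pseudo-unit verification you begin with is not needed for this statement --- it is only used later, for Theorem~\ref{T:PlIsStrMonoid} --- since here one works with non-reduced normal words; and $\osigma$ is the canonical extension of $\sigma$ to $X^*$, of which \eqref{E:BrMonoid}--\eqref{E:BrMonoid'} are consequences rather than the definition, though using them as recursive computation rules is harmless.
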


We will call the braidings from the theorem \emph{insertion braidings}, since they are based on Schensted's insertion algorithms.

For the arguments with $\alpha_1=\alpha_2=0$, the formulas for $\ast_R$ and~$\ast_C$ immediately give

\begin{corollary}
The functions $\rows$ and $\cols$ are subadditive on~$\YT_A$: for two tableaux $T_1,T_2$ one has
\begin{align*}
\rows(T_1 \ast T_2) &\le \rows(T_1)+\rows(T_2), &\cols(T_1 \ast T_2) &\le \cols(T_1)+\cols(T_2).
\end{align*}
\end{corollary}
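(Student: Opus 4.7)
The plan is to read off the inequality directly from the product formulas in Theorem~\ref{T:BraidingPlactic}, exploiting the fact that the decorations in $\YT_A \times \NN_0$ are required to be non-negative integers.

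Specifically, I would take $\alpha_1 = \alpha_2 = 0$ and compute
\[
(T_1,0) \ast_R (T_2,0) = \bigl(T_1 \ast T_2,\; \rows(T_1) + \rows(T_2) - \rows(T_1 \ast T_2)\bigr).
\]
Since $\ast_R$ is a well-defined binary operation on $\YT_A \times \NN_0$, the second component must lie in $\NN_0$, and this is exactly the inequality $\rows(T_1 \ast T_2) \le \rows(T_1) + \rows(T_2)$. Applying the same reasoning to $\ast_C$ with $\alpha_1 = \alpha_2 = 0$ produces the column inequality.

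There is essentially no obstacle here: the theorem has already done all the combinatorial work by showing that the stated formula genuinely defines a product on the set of $\NN_0$-decorated tableaux; non-negativity of the decoration is then an immediate consequence. If one prefers not to appeal to the theorem and wants a self-contained combinatorial argument, the same bound falls out of Lemma~\ref{L:CountRowCol} by induction on $\rows(T_2)$ (respectively $\cols(T_2)$): insert the rows of $T_2$ into $T_1$ one at a time, and at each step the number of rows grows by at most one, which is the content of the two-row bound in Lemma~\ref{L:CountRowCol}. Either route yields the corollary without further difficulty.
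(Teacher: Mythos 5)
Your main argument is exactly the paper's: the corollary is stated there as an immediate consequence of the formulas for $\ast_R$ and $\ast_C$ in Theorem~\ref{T:BraidingPlactic} with $\alpha_1=\alpha_2=0$, non-negativity of the $\NN_0$-decoration giving the inequality. So the proposal is correct and takes essentially the same route as the paper.
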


Knuth's bijection (Theorem~\ref{T:Knuth}) allows one to transport the above structures to the $\NN_{0}$-decorated plactic monoid:
\begin{corollary}
The constructions from Theorem~\ref{T:BraidingPlactic} induce, via the tableau map~$\T$, two braided commutative monoid structures on $\Pl_A \times \NN_{0}$.
\end{corollary}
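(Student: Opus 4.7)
The plan is to transport the structures of Theorem~\ref{T:BraidingPlactic} along the Knuth bijection. By Theorem~\ref{T:Knuth}, the tableau map $\T \colon \Pl_A \to \YT_A$ is a bijection, with inverse induced by~$\R$. Extending by the identity on the second factor gives a bijection $\widetilde{\T} = \T \times \Id_{\NN_0} \colon \Pl_A \times \NN_0 \to \YT_A \times \NN_0$, and pulling back the two monoid products~$\ast_R, \ast_C$, units $(\emptyset,0)$, and braidings $\osigma_R, \osigma_C$ from Theorem~\ref{T:BraidingPlactic} along~$\widetilde{\T}$ yields products, units, and braidings on $\Pl_A \times \NN_0$.

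The only thing to verify is that the braided commutative monoid axioms survive this transport; but associativity, unitality, the YBE~\eqref{E:YBE}, the idempotency of~$\sigma$, and the compatibility conditions \eqref{E:BrMonoid}--\eqref{E:BrMonoid'''} are all expressed as equalities between compositions of set maps, so they are preserved verbatim under any bijection of the underlying sets. Thus $\widetilde{\T}$ is an isomorphism of braided commutative monoids, and the pulled-back structures on $\Pl_A \times \NN_0$ satisfy all required axioms.

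Concretely, for $w_1, w_2 \in \Pl_A$ and $\alpha_1,\alpha_2 \in \NN_0$, the induced row-product reads
\begin{align*}
(w_1,\alpha_1) \ast_R (w_2,\alpha_2) &= \bigl( w_1 w_2, \, \alpha_1+\alpha_2+\rows(\T(w_1))+\rows(\T(w_2))-\rows(\T(w_1 w_2)) \bigr),
\end{align*}
and similarly for $\ast_C$ with $\cols$ in place of $\rows$; the braidings $\osigma_R$ and $\osigma_C$ on $\Pl_A \times \NN_0$ are obtained by composing $\widetilde{\T}^{\times 2}$, the braiding on $\YT_A \times \NN_0$, and $(\widetilde{\T}^{-1})^{\times 2}$. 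There is no real obstacle here: the corollary is a formal consequence of Theorem~\ref{T:BraidingPlactic} together with the bijectivity of~$\T$. The only point worth flagging is that the new products on $\Pl_A \times \NN_0$ are \emph{not} simply the componentwise concatenation-plus-sum product, since the $\NN_0$-factor picks up a correction term recording how many rows or columns are lost when $\T(w_1 w_2)$ is formed from $\T(w_1)$ and $\T(w_2)$.
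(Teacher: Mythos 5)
Your proposal is correct and matches the paper's (implicit) argument exactly: the corollary is stated there as an immediate transport of the structures of Theorem~\ref{T:BraidingPlactic} along the Knuth bijection~$\T$, with no further verification needed since all axioms are equalities of set maps. Your explicit formula for the induced product, with first component the plactic concatenation $w_1w_2$, is also right, because Theorem~\ref{T:Knuth} identifies the $\ast$-product on~$\YT_A$ with concatenation on~$\Pl_A$.
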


\begin{remark}\label{R:InsertionAsBraiding}
According to Theorem~\ref{T:NormalVsStrMon}, the braiding~$\osigma_R$ suffices to reconstruct the product~$\ast_R$ on $\YT_A \times \NN_{0}$, and hence the product~$\ast$ on~$\YT_A$. Schensted algorithms being nothing else than recipes for calculating $T \ast \ytableaushort{a}$ and $\ytableaushort{a} \ast T$ for a tableau~$T$ and an $a \in A$, we thus include these algorithms into the braided paradigm.
\end{remark}

At this stage working with the $\NN_{0}$-decorations might seem a handicap. We will now propose two ways to circumvent them. The second one will allow us to study the cohomology of undecorated plactic monoids in Section~\ref{S:BrHomPlactic}.

\begin{remark}\label{R:RestrictedTableaux}
The braiding $\osigma_R$ restricts to the set of couples $(T,\alpha)$ with $\rows(T)+\alpha = m$ for a fixed $m \in \NN$. Such couples correspond to $T \in \YT_A$ with $\rows(T) \le m$. One obtains an idempotent braiding on the set $\YT_A^{r \le m}$ of such tableaux. Similarly, $\osigma_C$ restricts to an idempotent braiding on the set $\YT_A^{c \le m}$ of tableaux~$T$ with $\cols(T) \le m$. The case $m=1$ recovers the braidings~$\sigma_R$ and~$\sigma_C$. In contrast, note that the total braidings $\osigma_R$ and $\osigma_C$ are not idempotent, but satisfy a weaker condition $\sigma^{3} = \sigma$.
\end{remark}

Alternatively, the explicit description \eqref{E:NormRow}-\eqref{E:NormCol} of normal words and the properties~\eqref{E:Empty} of $\emptyr$ and~$\emptyc$ show that they are pseudo-units for $(\Rows_A,\sigma_R)$ and $(\Cols_A,\sigma_C)$ respectively. Theorem~\ref{T:NormalVsStrMon} then implies
\begin{theorem}\label{T:PlIsStrMonoid}
The plactic monoid on a totally ordered alphabet~$A$ is isomorphic to the reduced structure monoids for the row/column braidings for~$A$: 
\begin{align*}
\oMon(\Rows_A,\sigma_R,\emptyr) & \; \overset{1:1}{\longleftrightarrow} \; \Pl_A \; \overset{1:1}{\longleftrightarrow} \; \oMon(\Cols_A,\sigma_C,\emptyc).
\end{align*}
\end{theorem}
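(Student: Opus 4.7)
The plan is to check the pseudo-unit hypotheses, invoke Theorem~\ref{T:NormalVsStrMon}(4), and transport the structure across Knuth's bijection. I will carry this out for the row case; the column case is completely parallel.

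First I would verify that $(\Rows_A,\sigma_R,\emptyr)$ is a PUIBS. The idempotent braiding axiom is Proposition~\ref{PR:RowBr}. For the pseudo-unit axioms: condition (i) is exactly the identity~\eqref{E:Empty}, namely $\sigma_R(r,\emptyr)=\sigma_R(\emptyr,r)=(\emptyr,r)$, so both outputs indeed lie in $\{(\emptyr,r),(r,\emptyr)\}$. For condition (ii), the description~\eqref{E:NormRow} of normal words as $\emptyr^\alpha r_1\cdots r_k$ with $\emptyr\ne r_1\underset{R}{\succ}\cdots\underset{R}{\succ}r_k$ makes it transparent that erasing any occurrence of~$\emptyr$ keeps the word normal: the non-empty tail is untouched and the empties still commute pseudo-trivially with everything on their left. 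The same argument, using~\eqref{E:NormCol} and the second half of~\eqref{E:Empty}, handles $(\Cols_A,\sigma_C,\emptyc)$.

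Next I would apply Theorem~\ref{T:NormalVsStrMon}(4). It furnishes a bijection $\oNorm(\Rows_A,\sigma_R,\emptyr)\leftrightarrow \oMon(\Rows_A,\sigma_R,\emptyr)$ under which concatenation in the reduced structure monoid corresponds to the product $\bw_1\ast\bw_2=\overline{\Delta_{n+m}(\bw_1\bw_2)}$ on reduced normal words. The reduced normal words are exactly the normal words of~\eqref{E:NormRow} with $\alpha=0$, and the map $r_1\cdots r_k\mapsto T$ (where $T$ is the tableau whose rows, from top to bottom, are $r_1,\ldots,r_k$) is a bijection $\oNorm(\Rows_A,\sigma_R,\emptyr)\overset{1:1}{\longleftrightarrow}\YT_A$. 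Composing with Knuth's bijection $\YT_A\overset{1:1}{\longleftrightarrow}\Pl_A$ from Theorem~\ref{T:Knuth} gives the required set-theoretic bijection $\oMon(\Rows_A,\sigma_R,\emptyr)\leftrightarrow\Pl_A$.

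It remains to check that the two products agree. This is where the work done in Theorem~\ref{T:BraidingPlactic} pays off: the full structure monoid $\Mon(\Rows_A,\sigma_R)$ is identified, via Theorem~\ref{T:NormalVsStrMon}(2) and~\eqref{E:NormRow}, with $(\YT_A\times\NN_0,\ast_R)$. Passing to the reduced quotient collapses the generator~$\emptyr$ to the empty word, i.e.\ forces the $\NN_0$-decoration to vanish; on the level of~$\ast_R$ this simply erases the $\alpha$-component of Theorem~\ref{T:BraidingPlactic}(1), leaving the Schensted product $(T_1,T_2)\mapsto T_1\ast T_2$ on~$\YT_A$. By Theorem~\ref{T:Knuth} this is precisely the plactic product transported through $\T$. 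The same argument for columns, with $\ast_C$ in place of $\ast_R$, yields the second bijection.

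The main point where one could slip is the compatibility of the reduced-quotient procedure with the product: one has to be sure that declaring $\emptyr\sim\emptyw$ in $\Mon$ kills the $\alpha$-coordinate and nothing else. This is guaranteed by the pseudo-unit condition (ii) together with the explicit form of $\ast_R$, in which $\alpha_1$ and $\alpha_2$ enter additively and never interact with the tableau part; so the hard-looking step is in fact automatic once the pseudo-unit axioms are checked.
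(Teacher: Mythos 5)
Your argument is correct and follows the same route as the paper, which derives the theorem in one line from the pseudo-unit verification (via \eqref{E:Empty} and the normal-word descriptions \eqref{E:NormRow}--\eqref{E:NormCol}) together with Theorem~\ref{T:NormalVsStrMon} and Knuth's bijection. You merely make explicit the product-compatibility step (the $\alpha$-coordinate collapsing under the reduced quotient) that the paper leaves implicit, and you do so correctly.
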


Note that the braidings from Theorem~\ref{T:BraidingPlactic} do not descend to~$\Pl_A$.

Our braided interpretation yields a simple way to determine the center of~$\Pl_A$:

\begin{proposition}
The center of the plactic monoid~$\Pl_A$ is the free monoid generated by the longest column $c_A:=a_n a_{n-1} \ldots a_2 a_1$ when~$A$ is the finite alphabet $a_1 < a_2 < \ldots < a_n$, and is trivial when~$A$ is infinite.
\end{proposition}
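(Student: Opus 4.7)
The plan is to split the statement into the easy direction (every $c_A^{\,m}$ is central and distinct~$m$'s give distinct elements) and the harder converse, using throughout the identification $\Pl_A\cong\oMon(\Cols_A,\sigma_C,\emptyc)$ of Theorem~\ref{T:PlIsStrMonoid} together with the subcolumn characterization of Observation~\ref{O:SubCol}. For the easy direction, every column~$c$ is a subcolumn of~$c_A$, so $\sigma_C(c,c_A)=\sigma_C(c_A,c)=(c_A,c)$, which in the structure monoid reads $c\cdot c_A=c_A\cdot c$. Since columns generate~$\Pl_A$ this makes each~$c_A^{\,m}$ central; and since distinct~$m$'s correspond to rectangular tableaux of distinct widths, these powers assemble into a free submonoid on~$c_A$.

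For the converse in the finite case $A=\{a_1<\cdots<a_n\}$, I would proceed by induction on~$n$. The base $n=1$ is trivial because $\Pl_A\cong\NN_{0}$ is commutative. For the inductive step take central~$z\in\Pl_A$ and let $T_z$ have column decomposition $c_1\preccurlyeq\cdots\preccurlyeq c_l$. The first task is to show that every~$c_i$ contains the maximal letter~$a_n$. I would compare $z\cdot\ytableaushort{a_n}$ and $\ytableaushort{a_n}\cdot z$: the former is computed by right-insertion, which simply appends~$a_n$ to the bottom row (no bumping occurs since~$a_n$ is maximal), placing the new cell at position $(1,l+1)$; the latter is computed by left-insertion, which chases~$a_n$ unchanged through every column that contains it (by Observation~\ref{O:SubCol}) and attaches~$a_n$ on top of the first column~$c_j$ that does not, placing the new cell at $(|c_j|+1,j)$. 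These two positions agree only when no such~$c_j$ exists, forcing every~$c_i$ to contain~$a_n$.

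Next I would descend to the smaller alphabet via the ``erase every~$a_n$'' map $\phi\colon\Pl_A\to\Pl_{A\setminus\{a_n\}}$; this is well defined because the Knuth relations involving~$a_n$ become trivial after erasure, and at the tableau level~$\phi$ amounts to removing the top cell of every column containing~$a_n$. Centrality is preserved by~$\phi$, so the inductive hypothesis yields $\phi(z)=c_{A\setminus\{a_n\}}^{\,m}$ for some~$m\ge0$. Lifting this back, every column of~$T_z$ must be either the full~$c_A$ or the singleton~$\ytableaushort{a_n}$, and the column order forces $T_z=c_A^{\,m}\cdot\ytableaushort{a_n}^{\,k}$ for some~$k\ge0$. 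Invoking left cancellativity of~$\Pl_A$ together with the centrality of~$c_A^{\,m}$ reduces the problem to showing $\ytableaushort{a_n}^{\,k}$ is itself central; but a direct computation of $\ytableaushort{a_n}^{\,k}\cdot\ytableaushort{a_1}$ against $\ytableaushort{a_1}\cdot\ytableaushort{a_n}^{\,k}$ yields Young tableaux of shapes $(k,1)$ and $(k+1)$ respectively, forcing $k=0$ whenever $n\ge2$, and hence $z=c_A^{\,m}$.

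The infinite case is easier and requires no induction: given any central~$z$ with non-empty~$T_z$, pick $a\in A$ strictly larger than every letter appearing in~$T_z$. Then right-insertion of~$a$ again appends~$a$ to the bottom row (adding the cell $(1,l+1)$), while left-insertion attaches~$a$ on top of~$c_1$ (adding the cell $(|c_1|+1,1)$); these cells are distinct as soon as $l,|c_1|\ge1$, a contradiction. The most delicate step of the whole argument is the verification that the erasure map~$\phi$ is both a plactic homomorphism and compatible with top-of-column removal on tableaux: both reduce to short case analyses on the Knuth relations and on Schensted insertion of the maximal letter, but the bookkeeping must be done carefully.
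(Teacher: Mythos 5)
Your route is genuinely different from the paper's. The paper analyses the single relation $\bw a=a\bw$ for an arbitrary letter $a$ missing from a column of the central element, using the braided form of both products (Observations~\ref{O:Comparison} and~\ref{O:SubCol}), and concludes in one stroke that every column of a central element contains \emph{every} letter of~$A$; this settles the finite and infinite cases simultaneously. You extract only the maximal letter $a_n$ this way -- and that step is correct: right insertion of $a_n$ adds a cell at the end of the bottom row, left insertion adds it atop the first column missing $a_n$, and equality of the two tableaux forces the positions to agree. You then induct on $|A|$ through the erase-$a_n$ map $\phi$; this is indeed a well-defined surjective morphism (each Knuth relation either avoids $a_n$ or trivializes under erasure), and in the only situation you need -- all columns of $T_z$ containing $a_n$ at their top -- the tableau description of $\phi$ (delete the top cell of each column) is a short check against the order~\eqref{E:ColOrder}. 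So the finite-case skeleton works. What the paper's argument buys is uniformity in the letter and a proof with no induction and no outside input; what yours buys is that the only insertion computations are for the maximal letter, at the price of extra ingredients.

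Two of those ingredients need attention. First, you invoke left cancellativity of $\Pl_A$ to pass from $z=c_A^{\,m}a_n^{\,k}$ central to $a_n^{\,k}$ central. Cancellativity of the plactic monoid is true, but it is a nontrivial classical theorem, nowhere proved or used in this paper, and it is heavier than what you need: with $T_z$ having columns $c_A^{\,m}$ followed by $k\ge 1$ singleton columns $a_n$ (and $n\ge 2$), compare $z\ast a_1$ and $a_1\ast z$ directly -- right insertion of $a_1$ bumps the $a_n$ in column $m{+}1$ and adds a cell in row~$2$ of that column, while left insertion of $a_1$ chases through the $c_A$'s, turns the first singleton into $a_1$ and expels an $a_n$ to a new cell at the end of row~$1$; the shapes differ, so $k=0$ without any cancellation. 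Second, and this is a genuine gap, your infinite case begins by picking $a\in A$ strictly larger than every letter of $T_z$. Such an $a$ need not exist: an infinite totally ordered alphabet may have a greatest element, and it may occur in $T_z$ (take $A=\QQ\cap[0,1]$ and $z$ containing the letter $1$). As written the infinite case therefore does not go through. It is easily repaired from your finite case: the letters of a nonempty central $z$ lie in a finite $A'\subset A$; enlarge $A'$ by one letter of $A\setminus A'$ not occurring in $z$. Since $\Pl_{A'}$ sits inside $\Pl_A$ as the submonoid of classes of words over $A'$ (by Theorem~\ref{T:Knuth}), $z$ remains central in $\Pl_{A'}$, so the finite case gives $z=c_{A'}^{\,m}$ with $m\ge 1$; but then $z$ contains the added letter, a contradiction. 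With these two repairs your argument is a complete, if less direct, alternative to the paper's proof.
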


\begin{proof}
Our proof jungles two interpretations of columns: inside~$\Pl_A$, and inside~$\YT_A$. By construction, the column braiding preserves the column product: $\sigma_C(c_1,c_2) = (c_3,c_4)$ implies the relation $c_3 c_4 = \C\T(c_1 c_2) = c_1 c_2$ in~$\Pl_A$. Then, by Observation~\ref{O:SubCol}, a column commutes with its subcolumns. Hence the longest column~$c_A$, if exists, is central in~$\Pl_A$. A length argument shows that $c_A$ generates a free submonoid of~$\Pl_A$. 

Now take a central element $\bw \in \Pl_A$, written in its column form $c_1 \cdots c_k$, $c_i \underset{C}{\preccurlyeq}  c_{i+1}$, using the map $\C \T$. Let us show that the last column~$c_k$, and hence every column~$c_i$, contains every letter $a \in A$. Suppose the contrary, and consider the commutation relation $\bw a=a \bw$. Theorems~\ref{T:NormalVsStrMon} and~\ref{T:PlIsStrMonoid} allow to rewrite it as a Young tableaux equality:
\[\overline{b_1 \cdots b_k (c_1, \ldots, c_k, a)} =  (c_1, \ldots, c_k) \ast a = a \ast (c_1, \ldots, c_k) = \overline{b_k \cdots b_1 (a,c_1, \ldots, c_k)},\]
where the positive braid monoid~$B^+_{k+1}$ from~\eqref{E:Bn} acts on $\Cols_A^{\times (k+1)}$ via~$\sigma_C$, and the overlines stand for the reduction, i.e., empty column elimination. Observations~\ref{O:Comparison} and~\ref{O:SubCol} leave us with two options. Either the above tableau~$T$ has $k+1$ columns, in which case the last one should be~$a$ and, at the same time, a letter from~$c_k$; but $a \notin c_k$. Or $T$ has $k$ columns, and the last one is simultaneously a subcolumn of $c_k a$ (which must be a column), and $c_k$ with a column with $\le 1$ letters placed on top of it. This is possible only when this last column is~$c_k$. Repeating this argument for all columns of~$T$, we conclude that its first column is $c_1a$ and, at the same time, $ac_1$---contradiction.
\end{proof}

This result is classical~\cite{placticLS}. In~\cite{COPlactic} it was used to show that the center of the plactic algebra $\kk \Pl_A$ is $\kk [c_A]$ when $A$ is finite.

\section{Insertion braidings vs. letter permuting operators}\label{S:Permutation}

In this section, $A$ is the alphabet $A_n := \{1,2,\ldots,n\}$ with the usual order, with $n \in \NN \cup \{\infty\}$. We will recall the classical operators $s_i$ on~$\YT_{A_n}$ which define an $S_n$-action on it, and establish its compatibility with the insertion braidings.

The~$s_i$ are first defined on words $w \in A_n^*$. Write out all the letters $i$ and~$i+1$ occurring in~$w$, keeping their order. Then match any consecutive letters $(i+1,i)$, in this order, and forget them; here the couple $(\text{last letter of } w, \text{first letter of } w)$ is considered consecutive. Continue this process for unmatched letters until only some $i$s or some $i+1$s are left. The pairs matched in the end are easily seen to be independent of the matching order. Then change all unmatched letters to $i+1$ or to $i$ respectively. Insert the obtained letters (both matched and unmatched) into their original positions, to get the word $s_i(w)$. The whole process is illustrated in~Figure~\ref{P:si}. 
 
\begin{figure}[!h]\centering
\vspace*{-.6cm}
\begin{align*}
31&2321232223311 \;\leadsto\; 1221222211 \;\leadsto\; 12\smash[b]{\color{gray}\underbracket[0.5pt][2pt]{21}}222211  \;\leadsto\; 12\smash[b]{\color{gray}\underbracket[0.5pt][2pt]{21}}222\smash[b]{\color{gray}\underbracket[0.5pt][2pt]{21}}1\\
&\leadsto\; 12\smash[b]{\color{gray}\underbracket[0.5pt][2pt]{21}}22\smash[b]{\color{gray}\underbracket[0.5pt][4pt]{2\smash[b]{\underbracket[0.5pt][2pt]{21}}1}} \;\leadsto\; \smash[b]{\color{gray}\underbracket[0.5pt][6pt]{1}}2\smash[b]{\color{gray}\underbracket[0.5pt][2pt]{21}}2\smash[b]{\color{gray}\underbracket[0.5pt][6pt]{2\smash[b]{\color{gray}\underbracket[0.5pt][4pt]{2\smash[b]{\underbracket[0.5pt][2pt]{21}}1}}}} \;\leadsto\; \smash[b]{\color{gray}\underbracket[0.5pt][6pt]{1}}{\color{red}1}\smash[b]{\color{gray}\underbracket[0.5pt][2pt]{21}}{\color{red}1}\smash[b]{\color{gray}\underbracket[0.5pt][6pt]{2\smash[b]{\underbracket[0.5pt][4pt]{2\smash[b]{\underbracket[0.5pt][2pt]{21}}1}}}} \;\leadsto\; 31{\color{red}1}321{\color{red}1}32223311
\end{align*}
\vspace*{-.6cm}
\caption{Computing $s_1(312321232223311)$}\label{P:si}
\end{figure}

The operators~$s_i$ are clearly involutive, and commute for $|i_1-i_2| > 1$. With some more work, they are shown to satisfy the YBE-like relation $s_is_{i+1}s_i = s_{i+1}s_is_{i+1}$, see \cite{placticLS,plactic}. They behave nicely with respect to the Knuth equivalence~$\sim$:

\begin{lemma}\label{L:SiKnuth}
Take four words $\boldsymbol{w_1},\boldsymbol{w_2},\boldsymbol{w_3},\boldsymbol{w_4} \in A_n^*$, with $\boldsymbol{w_2} \sim \boldsymbol{w_3}$. Then there exist four words $\boldsymbol{\widetilde{w_j}}$ of the same length as the~$\boldsymbol{w_j}$, with $\boldsymbol{\widetilde{w_2}} \sim \boldsymbol{\widetilde{w_3}}$, such that
\begin{align*}
s_i(\boldsymbol{w_1w_2w_4}) &= \boldsymbol{\widetilde{w_1}\widetilde{w_2}\widetilde{w_4}}, &s_i(\boldsymbol{w_1w_3w_4}) &= \boldsymbol{\widetilde{w_1}\widetilde{w_3}\widetilde{w_4}}.
\end{align*}
\end{lemma}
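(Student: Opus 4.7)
Knuth equivalence preserves length, so $\boldsymbol{w_2}$ and $\boldsymbol{w_3}$ occupy the same set of positions inside $\boldsymbol{w_1w_2w_4}$ and $\boldsymbol{w_1w_3w_4}$. The claim reduces to showing that $s_i(\boldsymbol{w_1w_2w_4})$ and $s_i(\boldsymbol{w_1w_3w_4})$ agree letter-wise on the first $|\boldsymbol{w_1}|$ and last $|\boldsymbol{w_4}|$ positions, while their middle blocks are Knuth-equivalent. By transitivity of $\sim$ and induction on the number of Knuth moves, one may assume that $\boldsymbol{w_2}$ and $\boldsymbol{w_3}$ are the two three-letter sides of a single instance of~\eqref{E:Knuth}, absorbing the surrounding context into $\boldsymbol{w_1}$ and $\boldsymbol{w_4}$. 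Recall that $s_i$ is determined by a cyclic matching of $(i+1,i)$-pairs in the $\{i,i+1\}$-subword of its input: letters outside $\{i,i+1\}$ are fixed, matched letters are fixed, and unmatched letters are swapped between $i$ and $i+1$. A direct inspection of~\eqref{E:Knuth} shows that $\boldsymbol{w_2}$ and $\boldsymbol{w_3}$ produce the same $\{i,i+1\}$-subword in all cases, except the two \emph{critical} relations $i(i+1)i\sim(i+1)ii$ and $(i+1)i(i+1)\sim(i+1)(i+1)i$.

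\textbf{Non-critical case.} The cyclic matchings on $\boldsymbol{w_1w_2w_4}$ and $\boldsymbol{w_1w_3w_4}$ coincide, so every letter has the same matched/unmatched status in both words. Applying $s_i$ letter-wise to the three segments yields decompositions $s_i(\boldsymbol{w_1w_2w_4})=\boldsymbol{\widetilde{w_1}\widetilde{w_2}\widetilde{w_4}}$ and $s_i(\boldsymbol{w_1w_3w_4})=\boldsymbol{\widetilde{w_1}\widetilde{w_3}\widetilde{w_4}}$ with common first and last blocks. One still needs to verify that the Knuth inequality ($x\le y<z$ or $x<y\le z$) survives the possible $i\leftrightarrow i+1$ swaps inside the block. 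Going through the sub-cases of~\eqref{E:Knuth} according to which of the three letters lie in $\{i,i+1\}$, the only potentially dangerous configurations are those where two adjacent letters of the $\{i,i+1\}$-subword of the block share a value but end up with opposite matched statuses. These are excluded by the following monotonicity of cyclic matching: if two adjacent letters of the $\{i,i+1\}$-subword are both $i$ and the earlier one is unmatched, then so is the later one (dually for $i+1$). This follows from the standard description of cyclic matching as linear stack matching completed by a non-crossing wrap-around pairing of the leftover unmatched $i$s (which sit at the start of the remaining subword) with the leftover unmatched $i+1$s (sitting at the end): the rightmost remaining $i+1$ is paired with the leftmost remaining $i$, and so on, so the ultimately unmatched $i$s are the linearly latest among the linearly unmatched ones.

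\textbf{Critical case.} For each critical relation the internal $(i+1,i)$ pair of the block is adjacent in the $\{i,i+1\}$-subword and is matched first by the procedure. After removing this pair, both reduced $\{i,i+1\}$-subwords become equal: they consist of the $\{i,i+1\}$-letters of $\boldsymbol{w_1}$, followed by a single leftover letter $\ell$ (with $\ell=i$ for the first critical relation, $\ell=i+1$ for the second), followed by the $\{i,i+1\}$-letters of $\boldsymbol{w_4}$. The subsequent matching coincides, and in particular the matched/unmatched status of the leftover letter is the same in both words. If the leftover is matched, $\boldsymbol{\widetilde{w_2}}=\boldsymbol{w_2}$ and $\boldsymbol{\widetilde{w_3}}=\boldsymbol{w_3}$ are related by the very relation we started with; if it is unmatched, the swap produces $\boldsymbol{\widetilde{w_2}}$ and $\boldsymbol{\widetilde{w_3}}$ related by the \emph{other} critical relation. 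In either subcase $\boldsymbol{\widetilde{w_1}}$ and $\boldsymbol{\widetilde{w_4}}$ agree. The technical heart of the argument will be the non-critical case: an exhaustive sub-case analysis of~\eqref{E:Knuth} made manageable by the cyclic monotonicity of matching.
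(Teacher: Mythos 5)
Your plan follows essentially the same skeleton as the paper's proof: reduce to a single Knuth move, single out the two relations $i(i{+}1)i\sim(i{+}1)ii$ and $(i{+}1)i(i{+}1)\sim(i{+}1)(i{+}1)i$ for special treatment, and rely on conservation of matched/unmatched statuses (the paper packages this via a letter-tracking permutation~$\theta$ that slides one letter across the matched $(i{+}1)i$ pair, rather than via your critical/non-critical dichotomy). Your critical case is handled correctly, and your monotonicity statement is exactly Observation~\ref{L:Neighbors}, which the paper records later for a different purpose.

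There is, however, a genuine gap in the principle on which your deferred ``exhaustive sub-case analysis'' rests: same-value adjacent pairs with opposite statuses are \emph{not} the only potentially dangerous configurations in the non-critical case. Take the relation $xzy\sim zxy$ with $x\le y<z$, $x<i$, $y=i$, $z=i{+}1$ (for instance $1\,3\,2\sim 3\,1\,2$ with $i=2$): this is non-critical, the $\{i,i{+}1\}$-letters of the block form the adjacent pair $(z,y)=(i{+}1,i)$, which does not share a value, and yet if $z$ were lowered (or $y$ raised) while the other letter stayed put, the blocks would become $1\,2\,2$ and $2\,1\,2$ (respectively $1\,3\,3$ and $3\,1\,3$), which are \emph{not} Knuth-equivalent; the mirror configuration $x=i$, $y=i{+}1$, $z>i{+}1$ arises for the relation $yxz\sim yzx$. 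Your monotonicity lemma is silent about these mixed-value pairs; what rules them out is a different (easy) fact you never state, namely that a subword-adjacent pair $(i{+}1,i)$ is always matched (by order-independence of the matching, match this pair first), so neither of its letters can flip. With that extra ingredient the inequality checks do close in every sub-case, so the gap is repairable; but as written, the classification of dangers is incomplete precisely at the step you yourself identify as the technical heart of the argument, and the claim ``the only potentially dangerous configurations are those where two adjacent letters share a value'' is false as stated.
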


\begin{proof}
Write $s_i(\boldsymbol{w_1w_2w_4}) = \boldsymbol{\widetilde{w_1}\widetilde{w_2}\widetilde{w_4}}$, where the whole word is cut into three so that the subwords~$\boldsymbol{w_j}$ and~$\boldsymbol{\widetilde{w_j}}$ have the same length. Further, suppose that~$\boldsymbol{w_3}$ is obtained from~$\boldsymbol{w_2}$ by applying a single Knuth relation; the general case follows by induction. We regard this relation as a permutation~$\theta$ of the letters of~$\boldsymbol{w_2}$. That is, $\boldsymbol{w_3}= \theta(\boldsymbol{w_2})$.  In the notations of~\eqref{E:Knuth}, we take as $\theta$ the transposition of~$x$ and~$z$, except for two cases:
\begin{itemize}
\item the relation $i (i+1) i \sim (i+1) i i$ is seen as moving~$i$ across $(i+1) i$;
\item the relation $(i+1) i (i+1) \sim (i+1) (i+1) i$ is seen as moving~$i+1$ across $(i+1) i$.
\end{itemize}
With this interpretation, all the letters $i,i+1$ from $\boldsymbol{w_1w_2w_4}$ conserve their matched/un\-match\-ed status in $\boldsymbol{w_1}\theta(\boldsymbol{w_2})\boldsymbol{w_4}$, with respect to the matching procedure from the definition of~$s_i$. Moreover, if all the unmatched $i$s (or $i+1$s) are changed to $i+1$ (or $i$) to get $s_i(\boldsymbol{w_1w_2w_4})$, the permutation~$\theta$ still describes a valid Knuth relation for~$\boldsymbol{\widetilde{w_2}}$, and satisfies $s_i(\boldsymbol{w_1\theta(w_2)w_4}) = \boldsymbol{\widetilde{w_1}}\theta(\boldsymbol{\widetilde{w_2}})\boldsymbol{\widetilde{w_4}}$. The assertion of the lemma then holds for $\boldsymbol{\widetilde{w_3}} = \theta(\boldsymbol{\widetilde{w_2}}) \sim \boldsymbol{\widetilde{w_2}}$.
\end{proof}

Plugging empty words~$\boldsymbol{w_1}$ and~$\boldsymbol{w_4}$ into the lemma, one recovers

\begin{theorem}[\cite{placticLS,plactic}]\label{T:SnAction}
The operators~$s_i$ above, for $1 \le i < n$, define an action of the symmetric group~$S_n$ on the set of words $A_n^*$, which descends to the plactic monoid~$\Pl_{A_n}$.
\end{theorem}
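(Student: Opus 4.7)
My plan is to separate the theorem into two independent claims: the $S_n$-action on $A_n^*$, and its descent to $\Pl_{A_n}$. The Coxeter presentation
\[ S_n = \langle \, s_1, \ldots, s_{n-1} \; | \; s_i^2 = 1,\; s_is_j=s_js_i \text{ for } |i-j|>1,\; s_is_{i+1}s_i = s_{i+1}s_is_{i+1} \, \rangle \]
will be the backbone: once the three relations hold for the operators on $A_n^*$, the first claim is immediate.

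For the relations themselves, the involutivity is clear from the definition, since the matching procedure depends only on the positions of the letters $i$ and $i+1$ (not on their identities), so $s_i$ flips the unmatched letters and flipping twice restores them; and the matched positions are untouched. The far commutation $s_i s_j = s_j s_i$ for $|i-j|>1$ is also immediate because $s_i$ affects only $\{i,i+1\}$-letters and $s_j$ affects only $\{j,j+1\}$-letters, which are disjoint alphabets under the hypothesis. The braid relation $s_is_{i+1}s_i = s_{i+1}s_is_{i+1}$ is stated (with references to \cite{placticLS,plactic}) in the paragraph preceding Lemma~\ref{L:SiKnuth}, so I will simply cite it; if I had to prove it from scratch, I would reduce to the three-letter alphabet $\{i,i+1,i+2\}$ by observing that letters outside this range are left unchanged by all three operators, and then argue by a careful (but finite) matching analysis on $\{i,i+1,i+2\}^*$.

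For the descent to $\Pl_{A_n}$, I invoke Lemma~\ref{L:SiKnuth} with $\boldsymbol{w_1} = \boldsymbol{w_4} = \boldsymbol{\varepsilon}$: if $\boldsymbol{w_2} \sim \boldsymbol{w_3}$, then the lemma provides $\boldsymbol{\widetilde{w_2}} \sim \boldsymbol{\widetilde{w_3}}$ with $s_i(\boldsymbol{w_2}) = \boldsymbol{\widetilde{w_2}}$ and $s_i(\boldsymbol{w_3}) = \boldsymbol{\widetilde{w_3}}$, hence $s_i(\boldsymbol{w_2}) \sim s_i(\boldsymbol{w_3})$. Each generator $s_i$ therefore descends to a well-defined map on $\Pl_{A_n}$, and since the Coxeter relations are equalities of maps that already hold on $A_n^*$, they hold a fortiori on the quotient. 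This gives the induced $S_n$-action on $\Pl_{A_n}$.

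The only non-routine point is the braid relation, but it is already recorded in the literature and the paper treats it as given; the main conceptual work has effectively been absorbed into Lemma~\ref{L:SiKnuth}, which is what makes the descent to $\Pl_{A_n}$ almost free. So the proof is essentially a bookkeeping argument: check the Coxeter relations on words, then quote the lemma.
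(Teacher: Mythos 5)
Your proposal is correct and matches the paper's own argument: the paper likewise takes the involutivity and far commutation as immediate, cites \cite{placticLS,plactic} for the braid relation $s_is_{i+1}s_i=s_{i+1}s_is_{i+1}$, and obtains the descent to $\Pl_{A_n}$ exactly by specializing Lemma~\ref{L:SiKnuth} to empty $\boldsymbol{w_1}$ and $\boldsymbol{w_4}$. Nothing essential differs beyond your slightly more explicit bookkeeping of the Coxeter presentation.
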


The induced operators from the theorem, as well as the operators on~$\YT_{A_n}$ corresponding to them via the Knuth bijection, are still denoted by~$s_i$.

\begin{example}
The row set $\Rows_{A_n}$ is identified with~$\NN_{0}^n$: the $i$th component of $\NN_{0}^n$ counts the number of $i$s in our row. The inclusion of $\NN_{0}^n \simeq \Rows_{A_n}$ into~$A_n^*$ or into~$\YT_{A_n}$ yields sections for the corresponding abelianization maps. Restricted to $\Rows_{A_n}$, the~$s_i$ above permute the components $i$ and $i+1$ of~$\NN_{0}^n$. 
\end{example}

This example justifies the name \emph{letter permuting operators} we employ for the~$s_i$.

\begin{example}
The set~$\YT_{A_2}$ is parametrized by triples $(n_{1,2},n_{2,1},n_{2,2}) \in \NN_{0}^{3}$ with $n_{1,2} \le n_{2,1}$: the parameter $n_{q,i}$ counts the number of $i$s in row~$q$. The action of~$s_1$ is here as follows:
\[s_1(n_{1,2},n_{2,1},n_{2,2}) = (n_{1,2},n_{2,2}+n_{1,2},n_{2,1}-n_{1,2}).\]
\end{example}

We will need some more properties of the letter permuting operators. Recall the {row} and {column maps} $\R,\C$ from Figure~\ref{P:YT}.

\begin{definition}
The \emph{shape} of a Young tableau is the sequence of the lengths of its rows. The set of tableaux on~$A$ of the same shape~$\lambda$ is denoted by~$\YT_A^{\lambda}$.
\end{definition}

\begin{proposition}\label{PR:SnActionRowsCols}
\begin{enumerate}
\item The operators~$s_i$ above restrict from~$A_n^*$ to the sets $\R(\YT_{A_n}^{\lambda})$ and $\C(\YT_{A_n}^{\lambda})$ of row- and column-readings of Young tableaux of the same shape~$\lambda$.
\item The bijection $\R \colon \YT_{A_n} \to \R(\YT_{A_n})$ intertwines the induced and the restricted versions of the~$s_i$. The same is true for~$\C$.
\item The~$s_i$ commute with the operators $\R \T$ and $\C \T$ on~$A_n^*$.
\item For any $\boldsymbol{w_j} \in \R(\YT_{A_n}^{\lambda_j})$ (or $\C(\YT_{A_n}^{\lambda_j})$), the word $s_i(\boldsymbol{w_1} \ldots \boldsymbol{w_k}) \in A_n^*$ decomposes as $\boldsymbol{\widetilde{w_1}} \ldots \boldsymbol{\widetilde{w_k}}$, where each $\boldsymbol{\widetilde{w_j}}$ lies in $\R(\YT_{A_n}^{\lambda_j})$ (respectively, $\C(\YT_{A_n}^{\lambda_j})$). 
\item The row and column versions of the above decomposition coincide. Explicitly, for any $T_j \in \YT_{A_n}$, put $\widetilde{T_j} = \T(\boldsymbol{\widetilde{w_j}})$ and $\widehat{T_j} = \T(\boldsymbol{\widetilde{v_j}})$, where the $\boldsymbol{\widetilde{w_j}}$ and the $\boldsymbol{\widetilde{v_j}}$ are computed as in Point~4 from $\boldsymbol{w_j} = \R(T_j)$ and $\boldsymbol{v_j} = \C(T_j)$ respectively. The tableaux thus obtained are identical: $\widetilde{T_j} = \widehat{T_j}$ for all~$j$.
\end{enumerate}
\end{proposition}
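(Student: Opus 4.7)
My plan is to do the combinatorial work in Item~1, read off Items~2 and~3 from it, repeat the argument for Item~4, and deduce Item~5 formally from Lemma~\ref{L:SiKnuth}. For Item~1, I would analyze the cyclic $(i+1,i)$-matching on the $\{i,i+1\}$-subword of $\R(T)=r^1 r^2\cdots r^m$. Each row $r^q$ contributes a block $i^{a_q}(i+1)^{b_q}$, so the whole subword is a concatenation of such blocks. Encoding $i+1$ as an opening and $i$ as a closing parenthesis reduces the matching to standard parenthesis matching with a wrap-around closing rule. The combinatorial heart is the classical bijection between matched pairs and column-adjacent $(i+1,i)$-pairs of $T$, namely cells $(q,p)$ and $(q+1,p)$ with $r^q[p]=i+1$ and $r^{q+1}[p]=i$; this is verified by induction on the number of rows, tracking the LIFO stack and using the strict column decrease of $T$. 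Granted this bijection, the unmatched letters in each row of $T$ are either a suffix of the $i$-block (those $i$s with no matching $i+1$ in the column immediately above) or a prefix of the $(i+1)$-block (those $i+1$s with no matching $i$ in the column immediately below); after the cyclic reduction, only one of these two types survives globally, hence at most one per row. Consequently the $s_i$-swap preserves the $i^*(i+1)^*$ pattern within each row, and by the same bijection each swapped cell is shielded from the conflicting entry in the row above, so strict column decrease is preserved.

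Items~2 and~3 are then formal consequences. Writing $s_i(\R(T))=\R(T')$ with $T'\in\YT_{A_n}^{\lambda}$, using $\T\R=\Id$ (Lemma~\ref{L:Insertion map}) and the well-definedness of $s_i$ on $\Pl_{A_n}$ (Theorem~\ref{T:SnAction}), one gets $s_i(T)=\T(s_i(\R(T)))=T'$, so $\R(s_i(T))=s_i(\R(T))$; the column case is identical, giving Item~2. Item~3 is then $s_i\circ\R\T=\R\circ s_i\circ\T=\R\T\circ s_i$. For Item~4, I would run the same parenthesis-matching analysis on $\boldsymbol{w_1}\cdots\boldsymbol{w_k}$ with each $\boldsymbol{w_j}=\R(T_j)$: globally, the $\{i,i+1\}$-subword is still a concatenation of row-blocks $i^a(i+1)^b$, spread now across $k$ tableaux, and the same LIFO argument shows that the unmatched letters lying inside a given $\boldsymbol{w_j}$ still form a suffix of an $i$-block or a prefix of an $(i+1)$-block per row of $T_j$, of a single type per row. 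Hence the positional subword $\boldsymbol{\widetilde{w_j}}$ of $s_i(\boldsymbol{w_1}\cdots\boldsymbol{w_k})$ is a row reading of a tableau of shape $\lambda_j$; the column case is treated symmetrically via Remark~\ref{R:ColRowDuality}.

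Item~5 then follows from Lemma~\ref{L:SiKnuth}: since $\R(T_j)\sim\C(T_j)$ have equal length, substituting $\boldsymbol{w_j}$ by $\boldsymbol{v_j}$ one index at a time and applying the lemma at each step shows that the pieces in the $s_i$-decomposition only drift within their Knuth classes, leaving lengths and the remaining pieces unchanged; so $\boldsymbol{\widetilde{w_j}}\sim\boldsymbol{\widetilde{v_j}}$, whence $\widetilde{T_j}=\T(\boldsymbol{\widetilde{w_j}})=\T(\boldsymbol{\widetilde{v_j}})=\widehat{T_j}$. The main obstacle is the column-pairing bijection invoked in Item~1 and used locally in Item~4: matched pairs in the cyclic parenthesis matching of the reading word should correspond exactly to column-adjacent $(i+1,i)$ pairs of~$T$. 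Making this precise at the word level requires a careful induction juggling the stack between consecutive rows while respecting strict column decreases, the delicate point being that the cyclic wrap-around step must not destroy the bijection.
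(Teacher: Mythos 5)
Your Items 2--3 and 5 are fine and coincide with the paper's own reductions (Item 2 via $\T\R=\Id$ and $\T s_i=s_i\T$, Item 5 via Lemma~\ref{L:SiKnuth} and Theorem~\ref{T:Knuth}). The problem is the combinatorial heart of Items 1 and 4: the ``classical bijection between matched pairs and column-adjacent $(i+1,i)$-pairs of $T$'' is false as stated, and you leave it unproven anyway (you yourself flag it as the main obstacle). A counterexample needing no wrap-around: let $T$ be the two-row tableau with top row $33$ and bottom row $122$, and take $i=2$. The $\{2,3\}$-subword of $\R(T)=33122$ is $3322$, and all four letters get matched (two pairs), while only one column of $T$ carries a $3$ directly above a $2$. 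With the paper's cyclic rule the mismatch appears even earlier: for top row $2$ over bottom row $112$, the word $2112$ acquires a second, cyclic, matched pair against a single column pair. So the set of matched pairs cannot be read off the columns, and an induction ``tracking the LIFO stack'' aimed at that bijection cannot succeed. What your shielding argument actually needs is only the weaker, one-directional fact that an $i$ sitting directly below an $i+1$ is always matched (so a changed letter never sits under an unchanged $i+1$, and similarly for horizontal neighbours); that statement is true --- and the cyclic rule, which only adds matches, is harmless for it --- but it is precisely the nontrivial content that still has to be proved, so as written the argument has a genuine gap.

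For comparison, the paper sidesteps any characterization of the matching: it reassembles $s_i(\R(T))$ into the shape $\lambda$ and rules out the only two possible violations of the tableau conditions by contradiction, using Observation~\ref{L:Neighbors} for a changed $i$ with an unchanged $i$ to its right, and a short chase of where the matching partner of an unchanged $i+1$ above a changed $i$ would have to sit. Because that argument only inspects a letter and its immediate neighbours against the global matching, it transfers verbatim to concatenations $\boldsymbol{w_1}\cdots\boldsymbol{w_k}$, which is how the paper gets Item 4; your global stack analysis would have to be repaired and redone in that setting as well. If you want to keep your route, replace the bijection by the one-directional lemma above (proved, say, by counting $i$s versus $(i+1)$s in the segment of the reading word between the two cells) and the rest of your outline goes through.
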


\begin{proof}
We will treat the row case only, the column case being analogous.

Take a $T \in \YT_{A_n}^{\lambda}$. Assume that $s_i$ acts on~$\R(T)$ by replacing some of its $i$s with $i+1$s; the case of the replacement $i+1 \leadsto i$ is similar. Reassemble $s_i(\R(T))$ into a tableau~$\widetilde{T}$ of shape~$\lambda$, filling each row from left to right, starting from the top row. We will show that~$\widetilde{T}$ is a Young tableau. This yields $s_i \R(T) = \R(\widetilde{T})$, hence Point~1.

We first need an observation, which directly follows from our matching procedure:

\begin{observation}\label{L:Neighbors}
Consider a word $\bw \in A_n^*$ with two consecutive letters~$i$. If in~$s_i(\bw)$ the left~$i$ changes to~$i+1$, then so does the right one. Similarly, if in~$s_{i-1}(\bw)$ the right~$i$ changes to~$i-1$, then so does the left one.
\end{observation}

Return now to our~$\widetilde{T}$, seen as a modification of~$T$. There are only two possible reasons for it not to be a Young tableau:
\begin{itemize}
 \item An~$i$ changed to~$i+1$, while its right neighbor~$i$ remained unchanged. This contradicts Observation~\ref{L:Neighbors}.
 \item An~$i$ changed to~$i+1$, while its top neighbor~$i+1$ remained unchanged. Assume that they live in column~$p$, and in rows~$q+1$ and~$q$ respectively. This means that this~$i+1$ was matched with some~$i$, which has to lie in row~$q+1$ and column $p' < p$. Then in~$T$, in each column between~$p'$ and~$p$, row~$q+1$ contained~$i$ and row~$q$ contained~$i+1$. This implies that our~$i$ from the cell $(p,q+1)$, unmatched by assumption, had to match with the~$i+1$ from the cell $(p',q)$. Contradiction.
\end{itemize} 
One concludes that $\widetilde{T}$ is a Young tableau, as announced.

The modified tableau $\widetilde{T}$ will serve us once more. Recall that it satisfies the relation $s_i \R(T) = \R(\widetilde{T})$. By Theorem~\ref{T:Knuth}, one has $\widetilde{T} = \T  \R(\widetilde{T}) = \T s_i \R(T)$. The action of~$s_i$ on Young tableaux is induced by its action on~$A_n^*$, implying $\T s_i = s_i \T$, and hence $\widetilde{T} = s_i \T \R(T) = s_i(T)$. So $s_i \R(T) = \R(\widetilde{T}) = \R s_i(T)$. This proves Point~2. Point~3 follows since~$s_i$ commutes with~$\T$.

The proof of Point~4 repeats verbatim that of Point~1. It remains to check Point~5. According to Theorem~\ref{T:Knuth}, for all~$j$ the words $\boldsymbol{w_j} = \R(T_j)$ and $\boldsymbol{v_j} = \C(T_j)$ are Knuth-equivalent. Lemma~\ref{L:SiKnuth} then yields $\boldsymbol{\widetilde{w_j}} \sim \boldsymbol{\widetilde{v_j}}$. Again by Theorem~\ref{T:Knuth}, one obtains the desired equality $\T(\boldsymbol{\widetilde{w_j}}) = \T(\boldsymbol{\widetilde{v_j}})$.
\end{proof}

Now we can extend the $S_n$-action given by the operators~$s_i$ from~$\YT_{A_n}$ to $(\YT_{A_n} \times \NN_{0})^{\times k}$. Concretely, put 
$s_i((T_1,\alpha_1), \ldots, (T_k,\alpha_k)) = ((\widetilde{T_1},\alpha_1), \ldots, (\widetilde{T_k},\alpha_k))$, 
where the $\widetilde{T_j}$ are computed from the $T_j$ by the last point of Proposition~\ref{PR:SnActionRowsCols}. That proposition, together with Theorem~\ref{T:SnAction}, imply that these~$s_i$ define an $S_n$-action on $(\YT_{A_n} \times \NN_{0})^{\times k}$.

\begin{theorem}\label{T:ActionCompat}
Take integers $n,k > 0$, and consider the power $(\YT_{A_n} \times \NN_{0})^{\times k}$ of the set of $\NN_{0}$-decorated Young tableaux on~$A_n$. It carries two actions: the letter permuting $S_n$-action via the operators~$s_i$ defined above, and the action of the positive braid monoid~$B_k^+$ via the braiding $\osigma_R$ from Theorem~\ref{T:BraidingPlactic}. The two actions commute. Similarly, the operators~$s_i$ and $\osigma_C$ yield pairwise commuting $S_n$- and $B_k^+$-actions.
\end{theorem}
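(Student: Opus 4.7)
The plan is to check the commutation $s_\ell b_i = b_i s_\ell$ on $(\YT_{A_n}\times\NN_0)^{\times k}$ for each generator $b_i$ of $B_k^+$ and each $s_\ell\in S_n$; the column case will go through verbatim using the column analogues in Proposition~\ref{PR:SnActionRowsCols}, so I focus on the row version. Fix the input $\vec{T}=((T_1,\alpha_1),\ldots,(T_k,\alpha_k))$, write $\osigma_R((T_i,\alpha_i),(T_{i+1},\alpha_{i+1}))=((T'_{i+1},\alpha'_{i+1}),(T'_i,\alpha'_i))$, and denote by $(\widetilde{T_j},\alpha_j)_j$ the output of $s_\ell(\vec{T})$, with $\widetilde{T_j}$ obtained from Proposition~\ref{PR:SnActionRowsCols}(4)--(5). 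By Proposition~\ref{PR:SnActionRowsCols}(1) and $\T s_\ell=s_\ell\T$, one has $\rows(\widetilde{T_j})=\rows(T_j)$ and $\rows(\widetilde{T_i}\ast\widetilde{T_{i+1}})=\rows(T_i\ast T_{i+1})$, so the decoration arithmetic of $\osigma_R$ yields matching values for $\alpha'_i,\alpha'_{i+1},\rows(T'_i),\rows(T'_{i+1})$ on both sides of the commutator.

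For the tableau parts, the key tool is Lemma~\ref{L:SiKnuth} applied with outer blocks $\R(T_1)\cdots\R(T_{i-1})$ and $\R(T_{i+2})\cdots\R(T_k)$ and inner blocks $\R(T_i)\R(T_{i+1})\sim \R(T'_{i+1})\R(T'_i)$, both being representatives of $T_i\ast T_{i+1}$ in the plactic monoid by Theorem~\ref{T:Knuth}. The lemma asserts that the outer slices of $s_\ell$ applied to the two full concatenations are identical, settling positions $j\neq i,i+1$. Denote by $(U_{i+1},U_i)$ the inner pair on the LHS $s_\ell\circ b_i(\vec{T})$ and by $(W_{i+1},W_i)=\osigma_R(\widetilde{T_i},\widetilde{T_{i+1}})$ the inner pair on the RHS $b_i\circ s_\ell(\vec{T})$. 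The Knuth-equivalence of inner slices supplied by Lemma~\ref{L:SiKnuth}, combined with Theorem~\ref{T:Knuth}, gives
\[U_{i+1}\ast U_i \;=\; \widetilde{T_i}\ast\widetilde{T_{i+1}} \;=\; W_{i+1}\ast W_i,\]
the last equality being a defining property of $\osigma_R$. Moreover $U_{i+1}$ and $W_{i+1}$ share the same shape, namely the top $\rows(T'_{i+1})$ rows of $\lambda(\widetilde{T_i}\ast\widetilde{T_{i+1}})$, and likewise for $U_i$ and $W_i$.

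What remains is to upgrade these coinciding products and shapes to individual equalities $U_{i+1}=W_{i+1}$ and $U_i=W_i$. This rests on the uniqueness of the row-split factorization of a Young tableau: if two pairs of Young tableaux with prescribed top and bottom shapes (matching the row split of the product shape) have the same $\ast$-product, they coincide piecewise. I expect this to be the main technical obstacle of the proof; it can be handled by tracking the recording skew-tableau of the insertion $\Ins(A,\R(B))$, which under the shape constraint is forced to be a standard tableau of the straight shape $\lambda^{\mathrm{bot}}$, and hence pins down $\R(A)\R(B)$ uniquely inside the Knuth class of $\R(T)$, whence $A$ and $B$ themselves. Once this uniqueness is granted, the row case is settled, and the column case follows by replacing $\rows,\R$ with $\cols,\C$ throughout, using the analogous points of Proposition~\ref{PR:SnActionRowsCols}.
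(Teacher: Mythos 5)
Your overall architecture — handling decorations via shape preservation, settling positions $j\neq i,i+1$ with Lemma~\ref{L:SiKnuth}, and deriving $U_{i+1}\ast U_i=\widetilde{T_i}\ast\widetilde{T_{i+1}}=W_{i+1}\ast W_i$ from Knuth equivalence and Theorem~\ref{T:Knuth} — is sound and parallels the paper. The genuine gap is exactly the step you flag as the ``main technical obstacle'': passing from equal products and equal shapes to $U_{i+1}=W_{i+1}$, $U_i=W_i$. As stated, your uniqueness claim is a multiplicity-one Littlewood--Richardson statement: for a fixed tableau $T$ of shape $\lambda$, the pairs $(A,B)$ with prescribed shapes $\mu,\nu$ and $A\ast B=T$ are counted by $c^{\lambda}_{\mu\nu}$, so you need $c^{\lambda}_{\mu\nu}=1$ in the case where $\lambda$ is the multiset union of $\mu$ and $\nu$ given by a horizontal cut. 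That is true, but it is a nontrivial lemma, not available anywhere in the paper, and your sketch of it does not hold as written: the recording data of $\Ins(U_{i+1},\R(U_i))$ lives on the \emph{skew} shape $\lambda/\lambda(U_{i+1})$, not on the straight shape $\lambda^{\mathrm{bot}}$, and it is semistandard rather than standard, so nothing is ``forced'' without a genuine argument. As submitted, the key step is asserted, not proved.

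The paper closes this step with a much lighter observation that you could substitute directly: apply Proposition~\ref{PR:SnActionRowsCols} not only to the individual factors but to the stacked tableau $T'_{i+1}\ast T'_{i}$ itself. Its row reading is the concatenation $\R(T'_{i+1})\R(T'_{i})$, so by Points~1 and~4 of that Proposition the word $s_\ell\bigl(\R(T'_{i+1})\R(T'_{i})\bigr)$ is again the reading of a Young tableau of the same shape, and it slices as $\R(U_{i+1})\R(U_i)$; hence $U_{i+1}$ placed on top of $U_i$ is itself a Young tableau, necessarily equal to $U_{i+1}\ast U_i=\widetilde{T_i}\ast\widetilde{T_{i+1}}$. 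Thus $(U_{i+1},U_i)$ is the horizontal cut of that product tableau at the same row position as the cut $(W_{i+1},W_i)$ prescribed by $\osigma_R$ (the row counts match, as you checked), and cutting a given tableau at a given row is trivially unique --- no factorization-uniqueness lemma is needed. With this replacement, or with an honest proof of your multiplicity-one lemma, your argument becomes complete, and the column case indeed goes through verbatim.
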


As a consequence, one obtains two actions of the direct product $S_n \times B_k^+$ on $(\YT_{A_n} \times \NN_{0})^{\times k}$. Pursuing Remark~\ref{R:RestrictedTableaux}, one can restrict these actions to those of $C_n \times B_k^+$ on $(\YT_A^{r \le m})^{\times k}$ and on $(\YT_A^{c \le m})^{\times k}$ respectively, for any $m>0$. 

\begin{proof}
We treat only the row case here, the column case being similar. We also omit the $\NN_{0}$-decorations, which demand a laborious but straightforward book-keeping.

Take Young tableaux $T_j \in \YT_{A_n}^{\lambda_j}$. By Proposition~\ref{PR:SnActionRowsCols}, one has $s_i(T_1, \ldots, T_k) = (\widetilde{T_1}, \ldots, \widetilde{T_k})$, with $\widetilde{T_j} \in \YT_{A_n}^{\lambda_j}$. Further, as explained in Theorem~\ref{T:BraidingPlactic}, a generator~$b_l$ of~$B_k^+$ sends $(T_1, \ldots, T_k)$ to $(T_1, \ldots, T'_{l+1},T'_{l},\ldots, T_k)$, and the tableau $T'_{l+1}$ can be placed on top of $T'_{l}$ to obtain $T'_{l+1} \ast T'_{l}$. By Proposition~\ref{PR:SnActionRowsCols} and Lemma~\ref{L:SiKnuth}, one has $s_i(T_1, \ldots, T'_{l+1},T'_{l},\ldots, T_k) = (\widetilde{T_1}, \ldots, \widetilde{T'_{l+1}},\widetilde{T'_{l}},\ldots, \widetilde{T_k})$, and $\widetilde{T'_{l+1}}$ placed on top of $\widetilde{T'_{l}}$ yields $\widetilde{T'_{l+1}} \ast \widetilde{T'_{l}}$. The diagram below summarizes all our notations.
\[\xymatrix@!0 @R=1.2cm @C=8cm{
    (T_1, \ldots, T_l,T_{l+1}, \ldots, T_k) \ar@{|->}[r]^{s_i} \ar@{|->}[d]^{b_l}  & (\widetilde{T_1}, \ldots, \widetilde{T_l},\widetilde{T_{l+1}}, \ldots,\widetilde{T_k})\\
    (T_1, \ldots, T'_{l+1},T'_{l},\ldots,T_k) \ar@{|->}[r]^{s_i} & (\widetilde{T_1}, \ldots, \widetilde{T'_{l+1}},\widetilde{T'_{l}},\ldots,\widetilde{T_k})}\]
The theorem follows if we show that the map~$b_l$ on the right completes this diagram into a commutative square. For this, one needs to compare $\widetilde{T'_{l+1}} \ast \widetilde{T'_{l}}$ and $\widetilde{T_{l}} \ast \widetilde{T_{l+1}}$. Recall that the row map~$\R$ intertwines the $\ast$-product and the concatenation product. Lemma~\ref{L:SiKnuth} then yields 
\[\R(\widetilde{T_{l}} \ast \widetilde{T_{l+1}}) = \R(\widetilde{T_{l}})\R(\widetilde{T_{l+1}}) \sim \R(\widetilde{T'_{l+1}}) \R(\widetilde{T'_{l}}) = \R(\widetilde{T'_{l+1}} \ast \widetilde{T'_{l}}).\]
The injectivity of $\R \colon \YT_{A_n} \to \Pl_{A_n}$ implies $\widetilde{T_{l}} \ast \widetilde{T_{l+1}} = \widetilde{T'_{l+1}} \ast \widetilde{T'_{l}}$, as desired.
\end{proof}

\begin{remark}
It could seem more natural to extend the~$s_i$ to $(\YT_{A_n} \times \NN_{0})^{\times k}$ diagonally. However, this extension is no longer compatible with the $B^+_k$-actions. Take for instance $k=2$ and one-cell tableaux $\ytableaushort{1}$ and $\ytableaushort{2}$. One computes
\begin{align*}
\sigma_R (s_1 \times s_1)(\ytableaushort{1},\ytableaushort{2}) &= \sigma_R (\ytableaushort{2},\ytableaushort{1}) = (\ytableaushort{2},\ytableaushort{1}),\\
(s_1 \times s_1) \sigma_R (\ytableaushort{1},\ytableaushort{2}) &=(s_1 \times s_1)(\emptyr,\ytableaushort{12}) =(\emptyr,\ytableaushort{12}).
\end{align*}
\end{remark}

\section{Braided cohomology: generalities}\label{S:BrHom}

We now attack the cohomological part of the paper. This section briefly reviews braided cohomology theory for a pseudo-unital idempotent braided set $(X,\sigma,1)$. The resulting cohomology groups turn out to compute the Hochschild cohomology of the structure monoid $\Mon(X,\sigma,1)$. Moreover, this identification respects cup products. For details and proofs, see~\cite{LebedIdempot}. In Section~\ref{S:BrHomPlactic}, this approach is applied to the column braiding and plactic monoids: the former is used to compute the cohomology of the latter.

Braided cohomology works as follows. Fix a PUIBS $(X,\sigma,1)$ and a commutative unital ring~$\kk$. For any $k \in \NN$, consider the $\kk$-module of \emph{critical\footnote{Our terminology is borrowed from the algebraic discrete Morse theory \cite{Skol,JoWe}, which is behind our cohomology comparison results.} $k$-cochains}
%CrC^k(X,\sigma,1; \kk)
\[CrC^k = \{\, f \colon X^{\times k} \to \kk \,|\, f(\ldots, 1, \ldots) = 0; f(\ldots, x,y, \ldots) = 0 \text{ when } \sigma(x,y) = (x,y) \}.  \]
Complete this with $CrC^0 = \kk$. Choose a \emph{braided character} of $(X,\sigma,1)$, i.e., a map $\varepsilon \colon X \to \kk$ satisfying $\varepsilon(1) = 1_{\kk}$ and $\varepsilon(x)\varepsilon(y) = \varepsilon(y')\varepsilon(x')$ whenever $\sigma(x,y) = (y',x')$. The simplest example is the \emph{constant braided character} $\varepsilon_1(x):=1_{\kk}$. As usual, the positive braid monoid~$B^+_k$~\eqref{E:Bn} acts on $X^{\times k}$ via~$\sigma$. Consider the \emph{braided differentials} 
\begin{align*}
d_{br}^{k} = \sum\nolimits_{i=1}^{k}(-1)^{i-1}(d^{k;i}_{l} - d^{k;i}_{r}) \colon& CrC^{k-1} \to CrC^{k}, \quad k > 0,\\
\text{where }\quad (d^{k;i}_{l}f) (x_1,\ldots,x_k) &= \varepsilon(x'_i)f(x'_1,\ldots,x'_{i-1},x_{i+1},\ldots,x_k),\\
x'_i x'_1 \ldots x'_{i-1} &= b_1 \cdots b_{i-1}(x_1 \ldots x_i),\\
(d^{k;i}_{r}f) (x_1,\ldots,x_k) &= f(x_1,\ldots,x_{i-1},x''_{i+1},\ldots,x''_k)\varepsilon(x''_i),\\
x''_{i+1} \ldots x''_k x''_i &= b_{k-i} \cdots b_1(x_i \ldots x_k).
\end{align*}
The superscript $i$ indicates the component of our $k$-tuple which should be pulled to the left or to the right, according to the subscript being~$l$ or~$r$. The chased component acts on, and is acted on by, everyone it crosses when moving, using the braiding~$\sigma$. After getting to the very left/right, it becomes an argument for~$\varepsilon$, while the remaining $(k-1)$-tuple is fed to~$f$. See~\cite{LebedIdempot} for more intuitive graphical and shuffle versions of these constructions.

Given two critical $k$-cochains $f \in CrC^p$, $g \in CrC^q$, their \emph{cup product} is defined by
\[ f \smile g (x_1,\ldots,x_{p+q})  = \sum_{s \in Sh_{p,q}} (-1)^{|s|} (f \times g)(T_{s^{-1}} (x_1,\ldots,x_{p+q})). \]
Here the summation runs over all \emph{$(p,q)$-shuffles}, i.e., all permutations $s \in S_{p+q}$ preserving the order of the first~$p$ and the last~$q$ elements: $s(1)<s(2)<\ldots<s(p)$, $s(p+1)<s(p+2)<\ldots<s(p+q)$. Further, taking any shortest words $t_{i_1}t_{i_2}\cdots t_{i_{|s|}}$ representing $s^{-1}$ in terms of elementary transpositions, lift it to $b_{i_1}b_{i_2}\cdots b_{i_{|s|}} \in B^+_{p+q}$. This lift depends on the permutation~$s$ only, and is denoted by $T_{s^{-1}}$. It acts on $X^{\times (p+q)}$ via~$\sigma$. Finally, the product $f \times g$ sends $(x'_1,\ldots,x'_{p+q})$ to $f (x'_1,\ldots,x'_{p})g(x'_{p+1},\ldots,x'_{p+q}) \in \kk$.

One can check that the two operations above are $\kk$-(bi)linear and well defined: the results are indeed critical $k$-cochains. Moreover, as the names suggest, they define a cohomology theory:

\begin{theorem}[\cite{LebedIdempot}]\label{T:BrHom}
The data $(CrC^k, d_{br}^k, \smile)$ above define a differential graded associative algebra, graded commutative up to homotopy. That is,
\begin{enumerate}
\item the differential squares to zero: $d_{br}^{k+1} d_{br}^k = 0$;
\item the cup product is associative, with the unit $1_{\kk} \in \kk = CrC^0$;
\item the differential is a derivation w.r.t. the cup product: 
\[d_{br}^{p+q+1}(f \smile g) = d_{br}^{p+1}(f) \smile g + (-1)^p f \smile d_{br}^{q+1}(g), \qquad f \in CrC^p, g \in CrC^q;\]
\item if the constant braided character $\varepsilon_1$ is used, then there exists a second product $\circ \colon CrC^{p} \otimes CrC^{q} \to CrC^{p+q-1}$ measuring the commutativity defect of~$\smile$:
\[g \smile f - (-1)^{pq} f \smile g = (-1)^{q}d_{br}^{p+q}(f \circ g) + (d_{br}^{p+1} f) \circ g - (-1)^{q}f \circ (d_{br}^{q+1} g).\]
\end{enumerate}
\end{theorem}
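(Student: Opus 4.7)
My overall strategy is to represent each operator $d_l^{k;i}$, $d_r^{k;i}$, and $\smile$ by a braid diagram in which strands encode arguments of cochains, crossings encode applications of $\sigma$, and caps encode applications of $\varepsilon$; each of the four assertions then reduces to a braid-isotopy verification. The two algebraic inputs that drive the diagrammatic identities are the YBE~\eqref{E:YBE}, which ensures consistency of the $B^+_k$-action on $X^{\times k}$, and the braided character axiom $\varepsilon(x)\varepsilon(y)=\varepsilon(y')\varepsilon(x')$, which allows two $\varepsilon$-caps to be dragged past a crossing. Before using any of this I would check that each $d_l^{k;i}$ and $d_r^{k;i}$ indeed lands in $CrC^{k}$: vanishing on tuples containing $1$ follows from condition~(ii) of a pseudo-unit, and vanishing on $\sigma$-fixed consecutive pairs follows from an easy YBE argument.

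For assertion~(1), I would first establish precubical commutations of the form $d_l^{k+1;j+1} d_l^{k;i} = d_l^{k+1;i} d_l^{k;j}$ for $i\le j$, and analogous identities for $d_r d_r$, $d_l d_r$, and $d_r d_l$. Each reduces to an equality of positive braid words acting on $X^{\times(k+1)}$, via repeated use of the YBE, combined with one application of the braided character axiom to merge the two successive $\varepsilon$-evaluations. Expanding $d_{br}^{k+1}d_{br}^{k} = \sum_{i,j}(-1)^{i+j}(d_l^{k+1;i}-d_r^{k+1;i})(d_l^{k;j}-d_r^{k;j})$ and applying these identities produces the familiar Hochschild pairwise sign-cancellation.

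Assertion~(2) is essentially formal once the braid lift $T_{s^{-1}}$ is in hand. The standard shuffle factorisation says that every $(p,q,r)$-shuffle decomposes uniquely as a $(p+q,r)$-shuffle followed by a $(p,q)$-shuffle acting on the first $p+q$ positions, and equivalently in the other bracketing; both routes produce the same element of $B^+_{p+q+r}$ by a braid-word argument, so the associator vanishes on the nose. The unit claim is immediate since a degree-zero factor forces the shuffle sum to collapse to a single term.

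The Leibniz rule~(3) and the homotopy graded-commutativity~(4) will be the main obstacle, mostly because of the intricate sign bookkeeping; neither uses anything essentially new beyond the YBE and the braided character axiom, but the combinatorial management is delicate. For~(3), I would expand $d_{br}^{p+q+1}(f\smile g)$ into its $2(p+q+1)$ face-terms, sort the resulting shuffles according to whether the strand pulled to the far left or right originates in the $f$-block or the $g$-block, and use the YBE to absorb that outermost pull into an appropriate smaller shuffle; this matches the two halves of the sum with $d_{br}^{p+1}(f)\smile g$ and $(-1)^p f\smile d_{br}^{q+1}(g)$. For~(4), once (1)--(3) are in place, I would define $f\circ g$ as a signed sum over \emph{partial} shuffles interleaving $f$ with proper initial-plus-terminal segments of $g$; the restriction to the constant character $\varepsilon_1$ is needed so that the cross-terms produced by $d_{br}$ carry the trivial weight, allowing the telescoping exactly as in Gerstenhaber's classical argument for associative algebras to go through.
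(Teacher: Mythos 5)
Your overall route is the right one, and indeed the paper itself does not prove Theorem~\ref{T:BrHom}: it is imported from~\cite{LebedIdempot}, where the argument is exactly the graphical/shuffle calculus you describe (braid-word identities driven by the YBE and the braided character axiom, the length-additive factorisation of $(p,q,r)$-shuffles for associativity, a sorting of shuffles by the block of the pulled strand for the Leibniz rule, and a cup-one--type homotopy $\circ$ for the constant character).

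There is, however, a concrete error in your preliminary step, and it affects how point~(1) must be organised. The individual face maps $d^{k;i}_{l}$, $d^{k;i}_{r}$ do \emph{not} send $CrC^{k-1}$ to $CrC^{k}$: they fail to vanish on tuples whose $\sigma$-fixed pair sits in positions $(i,i+1)$, i.e.\ exactly where the dragged strand starts. For instance, for the column braiding with the constant character, take $f=f_{(c_2,c_3)}\in CrC^{2}$ the indicator of a critical pair and any non-empty $c_1$ with $\sigma_C(c_1,c_2)=(c_1,c_2)$; then $(d^{3;1}_{l}f)(c_1,c_2,c_3)=\varepsilon(c_1)f(c_2,c_3)=1_{\kk}\neq 0$ although $(c_1,c_2)$ is $\sigma$-fixed. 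What is true is a cancellation: if $\sigma(x_i,x_{i+1})=(x_i,x_{i+1})$ then $b_i$ acts trivially and $(d^{k;i}_{l}f)(x_1,\ldots,x_k)=(d^{k;i+1}_{l}f)(x_1,\ldots,x_k)$ (likewise for $d_r$), so these terms kill each other in the alternating sum $d_{br}$, while for all other indices the fixed pair is transported to a fixed pair (this is where your ``easy YBE argument'' genuinely applies) adjacent among the arguments of~$f$, so those terms vanish individually. A similar phenomenon occurs for tuples containing the pseudo-unit~$1$: axiom~(i) allows the~$1$ to be swapped along the dragging and fed to $\varepsilon$ with $\varepsilon(1)=1_{\kk}$, so single face terms can survive and only cancellations (using axiom~(ii)) rescue criticality; condition~(ii) alone does not give a per-term vanishing. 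Consequently you cannot literally compose $d^{k+1;j}_{l/r}d^{k;i}_{l/r}$ as operators on critical cochains. The fix is standard: establish the precubical identities, the Leibniz rule and the homotopy formula on the full complex $\Map(X^{\times k},\kk)$, where each face map is defined and your braid-isotopy arguments are valid, and then verify separately that the \emph{total} operations $d_{br}$, $\smile$, $\circ$ preserve the critical subspace (or, as in~\cite{LebedIdempot}, obtain $CrC^*$ with its induced structure via algebraic discrete Morse theory). With that reorganisation, the rest of your plan --- the shuffle factorisation and length-additivity of the lifts $T_{s^{-1}}$ for associativity and the unit, the block-sorting for the derivation property, and the restriction to $\varepsilon_1$ so that the cross-terms in the homotopy carry trivial weights --- is sound in outline.
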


The cohomology of the above complex is denoted by $H^*(X,\sigma,1; \kk,\varepsilon)$, and called the \emph{braided cohomology} of $(X,\sigma,1)$ with coefficients in $(\kk,\varepsilon)$. 

By the theorem, the cup product induces an associative graded product in cohomology, for which we keep the name \emph{cup product} and the notation~$\smile$. It is commutative in the case of the constant character.

The braided cohomology construction and its properties can be extended to much more general YBE solutions. As coefficients, one can take any \emph{braided bimodule} instead of~$\kk$. Further, this theory admits a dual homological version. See~\cite{Lebed1, LebedIdempot}.

Consider next the reduced structure monoid $M = \oMon(X,\sigma,1)$. Our $\varepsilon$, extended to~$M$ multiplicatively, becomes a monoid character, still denoted by~$\varepsilon$. The \emph{Hoch\-schild cohomology} $H^*(M; \kk,\varepsilon)$ of~$M$ with coefficients in $(\kk,\varepsilon)$ is computed by the complex
\begin{align*}
&HC^k = \{\, f \colon M^{\times k} \to \kk \,|\, f(\ldots, 1, \ldots) = 0\}, \qquad C^0 = \kk,\\
&(d_{H}^{k}f)(x_1,\ldots,x_k) = \varepsilon(x_1) f(x_2,\ldots,x_k) -f(x_1x_2,x_3,\ldots,x_k) + \cdots\\
&\hspace*{3.5cm} +(-1)^{k-1}f(x_1,\ldots,x_{k-2}, x_{k-1}x_k)+(-1)^{k}f(x_1,\ldots,x_{k-1}) \varepsilon(x_k).
\end{align*}
Together with the \emph{cup product}
\[ f \smile g (x_1,\ldots,x_{p+q})  = f(x_1,\ldots,x_{p}) g(x_{p+1},\ldots,x_{p+q}), \]
these data enjoy all the properties from Theorem~\ref{T:BrHom}. As explained in~\cite{LebedIdempot}, this classical result for Hochschild cohomology can be recovered by plugging the PUIBS $(M,\sigma_{Ass},1_M)$ into Theorem~\ref{T:BrHom}, where $\sigma_{Ass}(v,w) = (1_M,vw)$.

Now, consider the maps
\begin{align*}
\TotSh_k \colon \qquad X^{\times k} &\to \kk X^{\times k} \hookrightarrow \kk M^{\times k},\\
x_1 x_2 \cdots x_k & \mapsto \sum_{s \in S_{k}} (-1)^{|s|} T_{s} (x_1\ldots x_{k}).
\end{align*}

\begin{theorem}[\cite{LebedIdempot}]\label{T:BrHomIdempot}
The maps~$\TotSh_k$ induce a differential graded algebra map 
\[\TotSh^* \colon (HC^*, d_{H}^*, \smile) \to (CrC^*, d_{br}^*, \smile)\]
between the Hochschild cochain complex of the reduced structure monoid of a PUIBS, and the critical cochain complex of the PUIBS itself, both with coefficients in $(\kk, \varepsilon)$. This map induces a graded algebra isomorphism in cohomology.
\end{theorem}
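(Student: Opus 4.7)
The plan is to proceed in three stages: first verify that $\TotSh^*$ is well-defined as a map $HC^* \to CrC^*$ of cochain modules, then check that it is a DG algebra map at the cochain level, and finally establish the cohomology isomorphism via algebraic discrete Morse theory (as hinted by the footnote at the beginning of the section). The explicit dual formula is $(\TotSh^*f)(x_1,\ldots,x_k) = \sum_{s\in S_k}(-1)^{|s|}f(T_s(x_1,\ldots,x_k))$, where the tuple on the right is viewed inside $M^{\times k}$.

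For well-definedness, I would observe that if some $x_i = 1$, the pseudo-unit condition ensures every $T_s(x_1,\ldots,x_k)$ still contains a $1$, so each summand in $\TotSh^*f$ vanishes by normalization of $f$. For the criticality condition, if $\sigma(x_i, x_{i+1}) = (x_i, x_{i+1})$, I would pair each $s \in S_k$ with $s \cdot (i,i+1)$: the two resulting tuples, by idempotency of $\sigma$ together with the braid relations, represent the same element of $M^{\times k}$ (since the defining relations of $\Mon(X,\sigma)$ make $x_i x_{i+1} = x_i x_{i+1}$), so the opposite signs force cancellation. The chain map property $d_{br}\TotSh^* = \TotSh^* d_H$ then reduces to a comparison of two shuffle sums: the Hochschild product terms $x_j x_{j+1}$ are rewritten, via the defining relations of $M$, into sums of braid-permuted tuples, and matched against the $b_1 \cdots b_{i-1}$ and $b_{k-i} \cdots b_1$ actions that pull components to the extremities where $\varepsilon$ is then evaluated. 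Cup product compatibility uses the decomposition $S_{p+q} = Sh_{p,q} \cdot (S_p \times S_q)$ with the multiplicativity of the lift $s \mapsto T_s$: the inner $S_p \times S_q$ factor supplies $\TotSh^*(f) \times \TotSh^*(g)$, while the outer shuffle factor reproduces exactly the definition of $\smile$ on $CrC^*$.

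The cohomology isomorphism is the deep part. I would exhibit it via algebraic discrete Morse theory applied to the bar resolution of the trivial $M$-bimodule $\kk$ tensored with $\varepsilon$. Using Theorem~\ref{T:NormalVsStrMon}, each element $m \in M$ is replaced by its unique reduced normal word, so the generators of the bar complex in degree $k$ become concatenations of normal words in $X^*$. I would then define a matching pairing every generator containing a letter $1$, or every generator in which two adjacent letters $x, y$ satisfy $\sigma(x,y) = (x,y)$, against a canonical partner obtained by sliding the redundant letter one step. The critical cells are then precisely the $k$-tuples $(x_1,\ldots,x_k) \in X^{\times k}$ with no letter $1$ and no $\sigma$-fixed adjacent pair; these are exactly the generators of $CrC^k$. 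The induced Morse differential is computed by tracking the zig-zag flow, and the key identifications $xy = y'x'$ in $M$ convert each rewriting step into precisely the braid-action terms appearing in $d_{br}^{k;i}_{l}$ or $d_{br}^{k;i}_{r}$.

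The main obstacle will be step three: showing that the discrete Morse flow on critical cells reproduces the braided differential on the nose, rather than up to a conjugation or deformation. This requires careful sign bookkeeping, since both antisymmetrization (in $\TotSh$) and the Morse collapse introduce signs, and the $\varepsilon$-evaluations must end up in the same positions in both formulations. Once this is settled, the homotopy equivalence provided by Morse theory dualizes to the map $\TotSh^*$ from the statement, which is therefore a quasi-isomorphism; combined with the cochain-level DG algebra property, it becomes an isomorphism of graded algebras in cohomology.
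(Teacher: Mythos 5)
First, a point of reference: the paper does not prove this theorem at all --- it is imported verbatim from \cite{LebedIdempot} --- so there is no in-paper argument to compare yours against; the only internal hint is the footnote saying that algebraic discrete Morse theory is behind the cohomology comparison, and your plan is indeed aligned with that strategy. Your cochain-level analysis is essentially sound: normalization of $f$ together with the pseudo-unit axioms gives vanishing on tuples containing $1$; the vanishing on $\sigma$-fixed adjacent pairs follows from pairing $s$ with $s t_i$, where for the longer of the two one has $T_{st_i}=T_s b_i$, and $b_i$ (acting first) fixes the tuple, so the two terms coincide and cancel by sign --- note that this fixing of the tuple is the real reason, not the vacuous relation $x_i x_{i+1}=x_i x_{i+1}$ in $\Mon(X,\sigma)$ that you invoke; and the cup-product compatibility via the length-additive factorization of $S_{p+q}$ into shuffles times $S_p\times S_q$, plus multiplicativity of $s\mapsto T_s$ on length-additive products, is the right argument.

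The genuine gap is your third stage, which is the actual content of the theorem and which you yourself label ``the main obstacle''. Two distinct claims are asserted there without proof. (i) That your matching on the bar complex of $\oMon(X,\sigma,1)$ (``slide the redundant letter one step'') is well defined and acyclic, and that its Morse complex is exactly $(CrC^*,d_{br}^*)$: bar-complex generators are tuples of reduced normal \emph{words}, not of letters, so the matching must also collapse multi-letter components (merge/split moves), and acyclicity of such a matching is a nontrivial verification, not a formality. (ii) That the homotopy equivalence produced by discrete Morse theory agrees (up to homotopy) with the specific map $\TotSh^*$ rather than with some other quasi-isomorphism; without this you only get an abstract isomorphism of the two cohomologies, not that $\TotSh^*$ induces it, and hence not the graded algebra isomorphism, since the multiplicativity you checked is a property of $\TotSh^*$ specifically. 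In addition, the chain-map identity $\TotSh^* d_H = d_{br}\,\TotSh^*$ is only gestured at (``a comparison of two shuffle sums''); it requires an actual computation matching the terms coming from products $x_jx_{j+1}$ in $M$ against the $d^{k;i}_{l}$ and $d^{k;i}_{r}$ contributions. So the outline points in the right direction, but the decisive identifications that make the theorem true are stated rather than established.
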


The isomorphism $\TotSh^*$, as well as the induced isomorphism in cohomology, are called \emph{quantum symmetrizers}. Notation $\TotSh^*$ is abusively used for any of them.

The quantum symmetrizer can be defined for very general YBE solutions. However, it is unknown if $\TotSh^*$ is a quasi-isomorphism in general. This question was raised independently by Farinati and Garc{\'{\i}}a-Galofre \cite{FarinatiGalofre}, and by Dilian Yang \cite[Question 7.5]{YangGraphYBE}. Besides the idempotent case, the answer is known to be positive for involutive braidings and the constant character on a ring~$\kk$ of characteristic zero~\cite{FarinatiGalofre}.

Theorem~\ref{T:BrHomIdempot} and the results of~\cite{FarinatiGalofre} open the way for braided techniques in Hochschild cohomology computation. This was applied in~\cite{LebedIdempot} to factorizable monoids, resulting in a generalized K\"{u}nneth formula. Applications to plactic monoids are discussed in the next section.

\section{Braided cohomology for plactic monoids}\label{S:BrHomPlactic}

Let us return to the plactic monoid. Theorem~\ref{T:PlIsStrMonoid} interprets~$\Pl_A$ as the reduced structure monoid for the column braiding~$\sigma_C$ or the row braiding~$\sigma_R$ for~$A$. Theorem~\ref{T:BrHomIdempot} then identifies the cohomologies of the three structures:

\begin{corollary}\label{C:PlacticAsBraidedCohom}
The braided cohomology $H^*(\Cols_A,\sigma_C,\emptyc; \kk,\varepsilon)$ of the braided set of $A$-columns and the Hochschild cohomology $H^*(\Pl_A; \kk,\varepsilon)$ of~$\Pl_A$ with the same coefficients are isomorphic graded algebras. The same holds for the braided set of $A$-rows.
\end{corollary}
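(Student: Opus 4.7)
The corollary is a direct composition of two previously established theorems, so the plan is essentially to verify that all hypotheses line up and then invoke them. First I would confirm that both $(\Cols_A,\sigma_C,\emptyc)$ and $(\Rows_A,\sigma_R,\emptyr)$ are genuine PUIBSs in the sense of Section~\ref{S:IdempotentBraiding}. Idempotence and the Yang--Baxter equation were proved in Proposition~\ref{PR:RowBr}. The pseudo-unit axioms for $\emptyc$ and~$\emptyr$ are exactly the content of~\eqref{E:Empty} (giving axiom~(i)) together with the explicit parametrization~\eqref{E:NormRow}--\eqref{E:NormCol} of normal words, which shows that erasing occurrences of the empty row/column from a normal word yields a normal word (axiom~(ii)).

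Next, I would invoke Theorem~\ref{T:PlIsStrMonoid} to identify the reduced structure monoid of either of these PUIBSs with $\Pl_A$. Under this identification, any braided character $\varepsilon$ on $(\Cols_A,\sigma_C,\emptyc)$ or $(\Rows_A,\sigma_R,\emptyr)$ extends uniquely to a monoid character on $\Pl_A$ (and conversely restricts from one), because the defining relations of the structure monoid are precisely those making $\varepsilon$ multiplicative. This ensures that the coefficient data match on the two sides of the desired isomorphism.

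Finally, I would apply Theorem~\ref{T:BrHomIdempot}: since $(\Cols_A,\sigma_C,\emptyc)$ is an idempotent PUIBS, the quantum symmetrizer $\TotSh^*$ is a graded algebra isomorphism from $H^*(\Pl_A;\kk,\varepsilon)$ to $H^*(\Cols_A,\sigma_C,\emptyc;\kk,\varepsilon)$. The identical argument applies to the row side.

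There is no real obstacle here; the work has been done in the preceding theorems. The only thing to be slightly careful about is the bookkeeping of characters versus braided characters and the distinction between the full structure monoid $\Mon$ and its reduced version $\oMon$, but both of these are handled uniformly by the conventions set up in Section~\ref{S:IdempotentBraiding} and the last paragraph of Section~\ref{S:BrHom}.
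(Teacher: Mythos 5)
Your proposal is correct and matches the paper's own argument: the corollary is obtained exactly by combining Theorem~\ref{T:PlIsStrMonoid} (identifying $\Pl_A$ with the reduced structure monoids of the column and row braidings) with Theorem~\ref{T:BrHomIdempot} (the quantum symmetrizer isomorphism), with the PUIBS and character bookkeeping handled as you describe.
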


Two types of braided characters are considered here:
\begin{enumerate}
 \item the constant character $\varepsilon_1$, often omitted from notations;
 \item the character $\varepsilon_0$ vanishing on all non-empty columns/rows/words.
\end{enumerate} 
While the first one is common in the set-theoretic YBE framework, the second one is fundamental in the associative world. More generally, a character~$\varepsilon$ for any of the above structures is determined by its values on $a \in A$, which can be chosen arbitrarily. 

Start with~$\varepsilon_1$. Assume the alphabet~$A$ finite. The degree~$1$ part of $H^*(\Pl_A; \kk)$ is easy to compute. The cocycle condition for $f \colon \Pl_A \to \kk$ reads $f(\bw\bww) = f(\bw) + f(\bww)$. Such additive maps form a free $\kk$-module with a basis~$\zeta_a$ indexed by $a \in A$, where $\zeta_a(\bw)$ counts the occurrences of the letter~$a$ in the word~$\bw$ (which is invariant under Knuth equivalence). Since $d_{H}^1$ vanishes, the~$\zeta_a$ also give a basis of $H^1(\Pl_A; \kk)$. The map $\TotSh_*$ being the tautological injection $\Cols_A \hookrightarrow \Pl_A$, one gets a basis~$\xi_a$ of $H^1(\Cols_A,\sigma_C,\emptyc; \kk)$, $a \in A$, given by
\begin{align}\label{E:LetterCount}
\xi_a(x_1 \ldots x_n) = \# \{i \,|\, x_i = a \} 1_{\kk}.
\end{align}

According to Theorem~\ref{T:BrHom}, different~$\xi_a$ anticommute with respect to the cup product. Moreover, one has $\xi_a \smile \xi_a = 0$. Indeed, the definition of the cup product reads here $\xi_a \smile \xi_a (c_1,c_2) =  \xi_a(c_1) \xi_a(c_2) - \xi_a(c_3) \xi_a(c_4)$, where $\sigma_C(c_1,c_2) = (c_3,c_4)$. If both $c_1$ and $c_2$ contain~$a$, then so do $c_3$ and $c_4$, yielding $\xi_a(c_1) \xi_a(c_2) = 1 = \xi_a(c_3) \xi_a(c_4)$. Otherwise at most one of $c_3$ and $c_4$ contains~$a$, hence $\xi_a(c_1) \xi_a(c_2) = 0 = \xi_a(c_3) \xi_a(c_4)$.

Further, the products $\xi_{a_1} \smile \xi_{a_2} \smile \cdots \smile \xi_{a_k}$ for $a_1 < \cdots < a_k$ are linearly independent in $H^*(\Cols_A,\sigma_C,\emptyc; \kk)$. Indeed, evaluate them on all $k$-tuples of columns of the form $a'_1, a'_2 a'_1, a'_3 a'_2 a'_1, \ldots, a'_k \cdots a'_2 a'_1$ with $a'_1 < \cdots < a'_k$. By Observation~\ref{O:SubCol}, the braiding~$\sigma_C$ acts on such $k$-tuples by permutation. As a consequence,
\begin{itemize}
 \item the evaluation yields~$1_{\kk}$ if $a_i = a'_i$ for all~$i$, and~$0$ otherwise;
 \item both $d^{k;i}_{l}$ and $d^{k;i}_{r}$ simply remove the $i$th component of such a $k$-tuple, without affecting the remaining ones; thus coboundaries $d_{br}^k f$ vanish on such $k$-tuples.
\end{itemize} 

We have just proved
\begin{theorem}\label{T:CohomPlactic}
Let~$A$ be a finite ordered set, and~$\kk$ a commutative unital ring. The exterior algebra~$\Lambda (\kk A)$ injects, as an algebra, into the Hochschild cohomology $H^*(\Pl_A; \kk)$ of the plactic monoid on~$A$, via the map $a \mapsto \zeta_a$.
\end{theorem}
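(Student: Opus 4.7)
The plan is to work entirely on the braided side, via the graded algebra isomorphism provided by Corollary~\ref{C:PlacticAsBraidedCohom}. So I would construct the desired embedding $\Lambda(\kk A) \hookrightarrow H^*(\Cols_A, \sigma_C, \emptyc; \kk)$ by sending each generator $a \in A$ to the critical $1$-cochain $\xi_a$ from~\eqref{E:LetterCount}, counting occurrences of~$a$. First I would verify that $\xi_a$ is a $1$-cocycle: by definition, $d_{br}^2 \xi_a(c_1,c_2) = \varepsilon_1(c_1')\xi_a(c_2) - \xi_a(c_1)\varepsilon_1(c_2') + \xi_a(c_1)\varepsilon_1(c_2') - \varepsilon_1(c_1')\xi_a(c_2)$ type of thing; more directly, since the braiding $\sigma_C$ just rearranges the multiset of letters appearing in a pair of columns (Observation~\ref{O:Comparison}), the count $\xi_a$ is invariant under $\sigma_C$, and cocyclicity becomes immediate from the definition of $d_{br}$.

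The main algebraic step is to establish the exterior algebra relations in cohomology. To prove $\xi_a \smile \xi_a = 0$, I would apply the cup-product formula and note that for any pair $(c_1,c_2)$ with $\sigma_C(c_1,c_2)=(c_3,c_4)$, the pair $(c_3,c_4)$ contains the letter~$a$ in both components if and only if $(c_1,c_2)$ does (again by the multiset-preservation afforded by Observation~\ref{O:Comparison}). This forces $\xi_a(c_1)\xi_a(c_2)=\xi_a(c_3)\xi_a(c_4)$, killing the cup square on the nose. The anticommutativity $\xi_a \smile \xi_b = -\xi_b \smile \xi_a$ for $a \ne b$ is then free: it follows from the graded commutativity up to homotopy of Theorem~\ref{T:BrHom} (part~4), applied in degree one where the homotopy term lives in degree zero modulo coboundaries.

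The real obstacle, and the most delicate part, is linear independence of the products $\xi_{a_1} \smile \cdots \smile \xi_{a_k}$ for $a_1 < \cdots < a_k$ in cohomology, not merely at the cochain level. I would proceed by constructing, for each increasing tuple $\boldsymbol a = (a_1 < \cdots < a_k)$, a test $k$-tuple of columns $C_{\boldsymbol a} := (a_1,\, a_2 a_1,\, a_3 a_2 a_1,\, \ldots,\, a_k \cdots a_1)$. Each shorter column is a subcolumn of the next, so Observation~\ref{O:SubCol} tells us that $\sigma_C$ acts on $C_{\boldsymbol a}$ purely by permutations of its entries, with no ``real'' braiding occurring. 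This has two consequences I would exploit in tandem. On the one hand, evaluating $\xi_{a_{\tau(1)}} \smile \cdots \smile \xi_{a_{\tau(k)}}$ on $C_{\boldsymbol a}$ will produce $\operatorname{sgn}(\tau) \cdot 1_{\kk}$ for $\tau \in S_k$ if the multiset of $a_i$s matches, and $0$ for a different multiset. On the other hand, for any critical $(k-1)$-cochain $f$, the braided coboundary $d_{br}^{k} f$ evaluated on such $C_{\boldsymbol a}$ collapses, since each face operator $d^{k;i}_l, d^{k;i}_r$ simply deletes one entry of $C_{\boldsymbol a}$ without touching the others, so the $l$- and $r$-contributions cancel pairwise. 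Hence the evaluation map on the tuples $\{C_{\boldsymbol a}\}$ descends to cohomology and detects the classes $\xi_{a_1} \smile \cdots \smile \xi_{a_k}$ separately.

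Combining these facts, the $\kk$-linear map $\Lambda(\kk A) \to H^*(\Cols_A,\sigma_C,\emptyc;\kk)$ sending $a_1 \wedge \cdots \wedge a_k$ to $\xi_{a_1} \smile \cdots \smile \xi_{a_k}$ is a well defined algebra homomorphism (by the relations above) and injective (by the evaluation argument). Transporting through $\TotSh^*$ of Theorem~\ref{T:BrHomIdempot} yields the desired injection $a \mapsto \zeta_a$ into $H^*(\Pl_A;\kk)$. The main conceptual point to keep in mind throughout is that the proof works only because $\sigma_C$ behaves as a pure permutation on increasing flags of subcolumns; without Observation~\ref{O:SubCol}, both the vanishing of the cup squares and the triviality of the boundaries on our test tuples would fail.
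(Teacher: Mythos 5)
Your proposal is correct and follows essentially the same route as the paper: the classes $\xi_a$ on the braided side, the on-the-nose vanishing of $\xi_a \smile \xi_a$ plus anticommutativity from Theorem~\ref{T:BrHom}, and linear independence detected by evaluating on the flag-like tuples $(a'_1,\, a'_2a'_1, \ldots, a'_k\cdots a'_1)$, on which $\sigma_C$ acts by permutations (Observation~\ref{O:SubCol}) so that all coboundaries vanish there, before transporting back via Theorem~\ref{T:BrHomIdempot}. The only cosmetic difference is that you verify cocyclicity of $\xi_a$ directly on the braided side, whereas the paper computes $H^1(\Pl_A;\kk)$ first and pulls the basis $\zeta_a$ back through the quantum symmetrizer.
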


Formula~\ref{E:LetterCount} also defines an $A$-labeled basis~$\theta_a$ of $H^1(\Rows_A,\sigma_R,\emptyr; \kk)$. The above theorem and Corollary~\ref{C:PlacticAsBraidedCohom} guarantee that the~$\theta_a$ freely generate an exterior subalgebra of $H^*(\Rows_A,\sigma_R,\emptyr; \kk)$. This statement is much more difficult to prove by direct combinatorial methods, the row braiding being more delicate than its column cousin.

A natural question is whether the theorem actually describes the whole of $H^*(\Pl_A; \kk)$.

\begin{conjecture}
The injection from Theorem~\ref{T:CohomPlactic} is in fact an algebra isomorphism $\Lambda (\kk A) \simeq H^*(\Pl_A; \kk)$.
\end{conjecture}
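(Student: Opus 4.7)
The plan is to work entirely on the braided side. By Corollary~\ref{C:PlacticAsBraidedCohom} and Theorem~\ref{T:BrHomIdempot}, $H^*(\Pl_A;\kk)$ is computed as a graded algebra by the critical cochain complex $CrC^*(\Cols_A,\sigma_C,\emptyc;\kk)$ with constant character $\varepsilon_1$. Using Observation~\ref{O:SubCol} together with a short check on the $\ast$-product of two columns, one sees that $\sigma_C(c,c')=(c,c')$ holds exactly when $c\preccurlyeq c'$, i.e., when $(c,c')$ already forms a valid two-column Young tableau. A critical $k$-cochain is therefore a function on the set $\mathcal{P}_k$ of $k$-tuples of non-empty columns no two consecutive entries of which are in the $\preccurlyeq$-order, a complex drastically smaller than the bar complex of $\Pl_A$. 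I would also identify the injection of Theorem~\ref{T:CohomPlactic} with the pullback along the abelianization $\pi\colon\Pl_A\twoheadrightarrow\NN_0^A$: since $\kk[\NN_0^A]$ is the polynomial algebra $\kk[x_a:a\in A]$, Koszul duality gives $H^*(\NN_0^A;\kk,\varepsilon_1)=\Lambda(\kk A)$, and the class $\zeta_a$ is exactly $\pi^*$ of the standard degree-one generator. The conjecture thus becomes the statement that $\pi^*$ is an isomorphism.

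For surjectivity I would attempt an algebraic discrete Morse reduction of $CrC^*$ in the style of Sk\"oldberg and J\"ollenbeck--Welker, the same framework underlying Lopatkin's computation~\cite{Lopatkin}. The target is an acyclic matching on $\mathcal{P}_*$ whose unmatched critical cells in degree $k$ are in natural bijection with the chains $a_1<\cdots<a_k$ in $A$, and on which the induced Morse differential vanishes. A candidate rule: scan a critical tuple $(c_1,\ldots,c_k)$ from the left, locate the first position where it deviates from the ``staircase'' canonical form $(a_1,a_2a_1,\ldots,a_k\cdots a_1)$ used in the linear-independence argument of the excerpt, and pair it with a tuple of length $k\pm 1$ obtained by splitting or merging columns at that position via the subcolumn mechanism of Observation~\ref{O:SubCol}. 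Acyclicity should follow from a strictly monotone lexicographic invariant on the sequence of column lengths, and the Morse differential on canonical representatives should vanish by the very same subcolumn cancellation that shows $\xi_a\smile\xi_a=0$ in the excerpt.

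The main obstacle is proving genuine acyclicity of the matching. The braiding $\sigma_C$ can drastically reshape tuples --- $\sigma_C(\{2\},\{1\})=(\{2,1\},\emptyc)$ merges two single-cell columns into one two-cell column --- so the chasing maps $d^{k;i}_l,d^{k;i}_r$ defining $d_{br}$ mix tuples of very different shapes, and ``flow lines'' in the Morse complex can become long and combinatorially intricate. Tracking them requires a careful case analysis modelled on the conservation laws captured by Observations~\ref{O:Comparison}--\ref{O:WeakInvert}. If a uniform matching proves elusive, my fallback is a spectral-sequence approach organised by the total multidegree $|\mathbf{m}|=\sum_a m_a$ of a tuple (the multiset of letters occurring in its columns): the differentials strictly increase $|\mathbf{m}|$, so the $E_1$-page decomposes over fixed multidegrees, with $\mathbf{m}\in\{0,1\}^A$ contributing the expected exterior summand and the remaining multidegrees conjecturally acyclic. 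This isolates the genuine difficulty and should submit to an induction on $\sum_a(m_a-1)_+$, reducing the conjecture to a purely combinatorial acyclicity statement on smaller sub-complexes.
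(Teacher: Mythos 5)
You are attempting to prove what is, in this paper, an open conjecture: the text explicitly records that surjectivity of the injection $\Lambda(\kk A)\to H^*(\Pl_A;\kk)$ was stated as a theorem in~\cite{Lopatkin} without a proof of surjectivity, that it has only been checked in small degrees, and that ``an argument working in all degrees is still missing.'' Your proposal does not close that gap. The preliminary reductions are correct but carry no new content: identifying critical cochains with functions on tuples of non-empty columns no two consecutive of which are in the $\preccurlyeq$-order is just the definition together with the normal-form description of $(\Cols_A,\sigma_C,\emptyc)$, and recognizing the map of Theorem~\ref{T:CohomPlactic} as pullback along the abelianization $\pi\colon \Pl_A\twoheadrightarrow \NN_0^{A}$ merely restates the conjecture as ``$\pi^*$ is an isomorphism.'' The entire proof burden --- surjectivity in every degree --- is carried by steps you yourself flag with ``candidate,'' ``should follow,'' and ``conjecturally.'' The Morse matching is not actually defined: the rule ``pair with a tuple of length $k\pm1$ obtained by splitting or merging at the first deviation from the staircase'' does not specify which of the two operations applies to a given tuple, does not show the partner is again critical, and does not show the rule is an involution, so one cannot even begin to verify acyclicity; and acyclicity is precisely the point at which the discrete-Morse route of~\cite{Lopatkin} failed, so asserting that it ``should follow from a strictly monotone lexicographic invariant'' is not an argument.

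The fallback is also incorrect as stated. With the constant character the braided differential does \emph{not} strictly increase the letter multidegree: when the chased column is completely absorbed by successive gluings (Observation~\ref{O:SubCol}) it exits as the empty column, $\varepsilon_1(\emptyc)=1_{\kk}$, and the content of the remaining tuple is unchanged. Hence multidegree gives only a filtration, the content-preserving ``gluing'' terms constitute the associated graded differential, and the direct-sum decomposition over fixed multidegrees that your $E_1$-page argument needs does not exist. Even after repairing this, the claim that all strata with some $m_a\ge 2$ are acyclic is the conjecture in disguise, and the proposed induction on $\sum_a(m_a-1)_+$ comes with no mechanism relating a stratum to smaller ones. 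In short, the proposal reproduces what the paper already establishes (the injection, the smallness of the critical complex) and leaves the genuinely hard step --- an acyclicity or surjectivity argument valid in all degrees --- exactly where the paper says it is: missing.
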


This result was stated as a theorem in~\cite{Lopatkin}, but surjectivity was not proved there. Using elementary combinatorics of columns, we checked surjectivity in small degree. An argument working in all degrees is still missing. A pleasant consequence of the conjecture would be cohomology vanishing in degree $> |A|$.

For~$\varepsilon_0$, the cohomology algebra is more intricate. Its description will involve the maps $f_{c_1, \ldots, c_k} \in CrC^k (\Cols_A,\sigma_C,\emptyc)$, where the non-empty columns $c_i$ satisfy $\sigma_C(c_i,c_{i+1}) \neq (c_i,c_{i+1})$ for all~$i$; such $k$-tuples of columns are called \emph{critical}. The value of $f_{c_1, \ldots, c_k}$ is~$1_{\kk}$ on $(c_1, \ldots, c_k)$, and~$0$ on other arguments. Given a cocycle $f \in CrC^k$, we keep notation~$f$ for both its braided cohomology class, and the Hochschild cohomology class corresponding to it via the quantum symmetrizer.  We will also use the $\kk$-module $A^* = \Map(A,\kk)$ and, more generally, $\CCol^* = \Map(\CCol,\kk)$ for any collection of columns~$\CCol$.

\ytableausetup{boxsize=1em}
\begin{theorem}\label{T:CohomPlactic2}
Let~$A$ be an ordered set, and~$\kk$ a commutative unital ring. Consider the Hochschild cohomology $H^* := H^*(\Pl_A; \kk, \varepsilon_0)$ of the plactic monoid on~$A$ with coefficients in $(\kk, \varepsilon_0)$.
\begin{enumerate}
\item $H^*$ is non-zero in every degree, unless $|A| \le 2$.
\item There is a $\kk$-linear isomorphism $H^1 \simeq A^*$.
%A $\kk$-basis of $H^1$ is given by the cohomology classes of $f_{\ytableaushort{{\scriptstyle a}}}$, $a \in A$.
\item There is a $\kk$-linear isomorphism $H^2 \simeq (\CCol_A^{\wedge})^*$, where $\CCol_A^{\wedge}$ is the collection of $A$-column couples $(\ytableaushort{a},\ytableaushort{b,d})$ with $a \le b$. 
% For finite~$A$, a $\kk$-basis of $H^2$ is given by the cohomology classes of $f_{\ytableaushort{{\scriptstyle a}},\ytableaushort{{\scriptstyle b},{\scriptstyle d}}}$ with $a \le b$.
\item The cup product vanishes on $H^1$. %, i.e., $f_{\ytableaushort{{\scriptstyle a}}} \smile f_{\ytableaushort{{\scriptstyle b}}} = 0$ for all $a,b \in A$.
\ytableausetup{boxsize=.5em}
\item For a two-letter alphabet $A= \{1,2\}$, $H^1$ has a basis $f_{\ytableaushort{{\scriptstyle 1}}},f_{\ytableaushort{{\scriptstyle 2}}}$; $H^2$ has a basis $f_{\ytableaushort{{\scriptstyle 1}},\ytableaushort{{\scriptstyle 2},{\scriptstyle 1}}}, f_{\ytableaushort{{\scriptstyle 2}},\ytableaushort{{\scriptstyle 2},{\scriptstyle 1}}}$; $H^3$ has a basis $f_{\ytableaushort{{\scriptstyle 2}},\ytableaushort{{\scriptstyle 1}},\ytableaushort{{\scriptstyle 2},{\scriptstyle 1}}}$. In higher degrees, the cohomology vanishes. The only non-zero products amongst these generators are 
\[ f_{\ytableaushort{{\scriptstyle 2}}} \smile f_{\ytableaushort{{\scriptstyle 1}},\ytableaushort{{\scriptstyle 2},{\scriptstyle 1}}} = - f_{\ytableaushort{{\scriptstyle 2}},\ytableaushort{{\scriptstyle 2},{\scriptstyle 1}}} \smile f_{\ytableaushort{{\scriptstyle 1}}}  = f_{\ytableaushort{{\scriptstyle 2}},\ytableaushort{{\scriptstyle 1}},\ytableaushort{{\scriptstyle 2},{\scriptstyle 1}}}.\]
\end{enumerate}
\end{theorem}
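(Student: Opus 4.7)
The plan is to pass through the isomorphism $H^* \cong H^*(\Cols_A, \sigma_C, \emptyc; \kk, \varepsilon_0)$ of Corollary~\ref{C:PlacticAsBraidedCohom}, and then exploit a drastic simplification of the braided differential under $\varepsilon_0$. On any critical $k$-tuple $(c_1, \ldots, c_k)$, every term $(d^{k;i}_l f)(c_1, \ldots, c_k)$ vanishes, because the leftmost column produced by $b_1 \cdots b_{i-1}$ is a supercolumn of $c_1 \neq \emptyc$ (by iterated use of Observation~\ref{O:Comparison}), and $\varepsilon_0$ is zero on nonempty columns. Thus on critical tuples $(d^k_{br} f) = -\sum_{i=1}^{k-1}(-1)^{i-1}(d^{k;i}_r f)$, and $(d^{k;i}_r f)$ is itself nonzero only when the braid $b_{k-i}\cdots b_1(c_i, \ldots, c_k)$ produces $\emptyc$ at the rightmost position, i.e.\ when $c_i \ast \cdots \ast c_k$ has fewer than $k-i+1$ columns.

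This reduction immediately settles (2) and (4). For a critical $1$-cochain $g$, the formula becomes $(d^2_{br} g)(c_1, c_2) = -g(c_1 c_2)$ on critical pairs that stack into a single column (the bottom letter of $c_1$ strictly exceeding the top letter of $c_2$), and zero otherwise. Since every column of length $\ge 2$ arises as such a stacking, cocyclicity of $g$ forces $g$ to vanish on columns of length $\ge 2$, giving $H^1 \cong A^*$. For (4), the shuffle formula for $f_a \smile f_b$ at a critical pair $(c_1, c_2)$ simplifies to $f_a(c_1) f_b(c_2)$, vanishing when $a \le b$ (since $(a, b)$ is then not critical) and coinciding with $d^2_{br}(-\chi_{ab})$ when $a > b$.

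For (3), I would first verify that each $f_{(\ytableaushort{a}, \ytableaushort{b,d})}$ with $a \le b$ is a cocycle: on a critical triple $(y_1, y_2, y_3)$, the $(d^{3;1}_r f)$ contribution would require the first column of $y_1 \ast y_2$ to equal $\ytableaushort{a}$, impossible since that column has length $\ge |y_2| \ge 2$; the $(d^{3;2}_r f)$ contribution would require $y_2 \ast y_3 = \ytableaushort{b, d}$ as a single column, forcing $(y_1, y_2, y_3) = (a, b, d)$, whence criticality of $(a, b)$ demands $a > b$, contradicting $a \le b$. These classes are independent in $H^2$ since $(\ytableaushort{a}, \ytableaushort{b, d})$ with $a \le b$ is non-stackable, hence untouched by any coboundary. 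For surjectivity, one uses cocycle conditions on triples whose consecutive entries stack into a column to show that any cocycle $f$ takes a common value on all stackable decompositions of each length-$\ge 2$ column (killable by a single $d^2_{br} g$); further cocycle conditions on triples $(a, c', c'')$ with $c' \ast c''$ a single column of length $\ge 3$ then propagate vanishing across all shapes outside $(1, 2)$.

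Part (5) is a direct enumeration: for $A = \{1, 2\}$ the only critical pairs are the three listed, the sole critical triple is $(\ytableaushort{2}, \ytableaushort{1}, \ytableaushort{2, 1})$, and no critical $k$-tuple exists for $k \ge 4$; the cochain $f_{(\ytableaushort{2}, \ytableaushort{1}, \ytableaushort{2, 1})}$ is then automatically a cocycle, and its non-triviality follows by verifying that every $d^3_{br} g$ with $g \in CrC^2$ vanishes at this triple (the only potential contribution involves $\sigma_C(\ytableaushort{2}, \ytableaushort{1}) = (\ytableaushort{2, 1}, \emptyc)$, leading to $g$ evaluated at the non-critical pair $(\ytableaushort{2, 1}, \ytableaushort{2, 1})$). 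The cup products are verified by the shuffle formula at the unique triple. For (1) with $|A| \ge 3$, the plan is to exhibit explicit nonzero classes in each $H^k$ via cup products: the class $[f_{(\ytableaushort{1}, \ytableaushort{3, 2})}]^{\smile k} \in H^{2k}$ evaluates to $1$ at the alternating critical tuple $(\ytableaushort{1}, \ytableaushort{3, 2}, \ldots, \ytableaushort{1}, \ytableaushort{3, 2})$ of length $2k$, because every non-identity shuffle in the cup formula applies a braiding that introduces an $\emptyc$, killing the product of indicator values; the same tuple cannot be hit by any coboundary $d^{2k}_{br} g$ for a critical $g$, since all candidate rearrangements land $g$ on a non-critical tuple. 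Odd degrees are reached by cupping further with some $f_a \in H^1$. The main obstacle I foresee is the surjectivity argument in (3), which requires a careful case analysis chaining cocycle constraints across critical pairs of all shapes.
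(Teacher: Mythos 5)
Your overall route is the paper's: pass to the braided cohomology of $(\Cols_A,\sigma_C,\emptyc)$ via Corollary~\ref{C:PlacticAsBraidedCohom}, kill the left half-differentials $d^{k;i}_l$ on critical tuples using $\varepsilon_0$, and then work degree by degree with critical tuples of columns. Points (2) and (5) coincide with the paper's treatment; your variations are legitimate and even pleasant: in (4) you exhibit $f_a\smile f_b$ for $a>b$ as the explicit coboundary $d^2_{br}(-\chi_{ab})$ (the paper instead checks vanishing on $\CCol_A^{\wedge}$ and appeals to the $H^2$ description), and in (1) you use cup powers of the degree-$2$ class supported on $(1,32)$ rather than the paper's direct indicator cocycle of the alternating tuple. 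However, there is a genuine gap, and it is where you yourself locate it: the surjectivity half of (3), i.e.\ that every $2$-cocycle is cohomologous to one supported on $\CCol_A^{\wedge}$. Your sketch covers gluable pairs (common value on all stackings of a column, corrected by one $d^2_{br}g$) and pairs $(a,c)$ with $c$ of length $\ge 3$ via triples whose last two entries glue; but the intermediate case of non-gluable critical pairs $(c,c')$ with $c$ of length $\ge 2$ is never handled, and it is indispensable: your length-$\ge 3$ reduction trades $f'(a,c_2c_3)$ for a value $f'(c_2',bc_3)$ whose first entry has $\ge 2$ cells, so without that case the propagation stops. In the paper this is exactly the second of the three cases extracted from $(d^3_{br}f')(a,c_2,c_3)$, namely triples where $(a,c_2)$ glue but $(c_2,c_3)$ do not. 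Until this is supplied, (3) is unproved, and so is the fact (used implicitly for independence statements) that a cocycle nonzero on $\CCol_A^{\wedge}$ is nonzero in cohomology only settles half of the isomorphism.

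Two further steps in (1) are asserted rather than proved, and they are the actual content. First, to see that every non-identity shuffle term in the cup power vanishes at the alternating tuple, it is not enough that ``a braiding introduces an $\emptyc$'': you also need that the number of non-empty columns never increases under $\sigma_C$ (Observation~\ref{O:Comparison}), so that once a gluing such as $\sigma_C(32,1)=(321,\emptyc)$ has occurred the resulting $2k$-tuple can no longer match the support of the product of indicators. Second, ``all candidate rearrangements land $g$ on a non-critical tuple'' is precisely the verification the paper carries out: pulling a $1$ or a $2$ rightwards through the alternating tuple creates the non-critical neighbors $21,32$, and pulling a $32$ creates $321,32$; without this check the non-triviality of your classes (and of the odd-degree ones obtained by cupping with some $f_a$, for which the same check at the tuple prefixed by $2$ is needed) is not established. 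Finally, a small slip in your cocyclicity check for (3): the inequality $|y_2|\ge 2$ for a critical triple is unjustified (the middle entry can be a single cell); the conclusion survives because a one-cell first output of $\sigma_C(y_1,y_2)$ would force, by cell count and Observation~\ref{O:Comparison}, $\sigma_C(y_1,y_2)=(y_1,y_2)$, contradicting criticality — argue it that way.
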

\ytableausetup{boxsize=1em}

\begin{remark}
In the last point we obtain a cup product in cohomology which is very far from being graded commutative.
\end{remark}

\begin{proof}
By Theorem~\ref{T:BrHomIdempot}, we can compute the braided cohomology $H^*(\Cols_A,\sigma_C,\emptyc; \kk, \varepsilon_0)$ to get the desired Hochschild cohomology. Observe that for our coefficients, the ``left'' terms $d^{k;i}_{l}f$ of the braided differential vanish on critical $k$-tuples $c_1,\ldots,c_k$. Indeed, an empty column appears as one of the components of $\sigma_C (c,c') = (c'',c''')$ if and only if the columns~$c$ and~$c'$ are \emph{gluable}, i.e., merge into the column $c'' = c c'$; in this case $c''' = \emptyc$. So the column $c'_i$ from $d^{k;i}_{l}f(c_1,\ldots,c_k) = \varepsilon(c'_i) f(c'_1,\ldots, c'_{i-1}, c_{i+1},\ldots, c_k)$ cannot be empty, yielding $\varepsilon(c'_i) = 0$. We will discard the terms $d^{k;i}_{l}f$ in what follows.
\begin{enumerate}
\item Assume $k \ge 3$; the remaining cases follow from subsequent points. Let~$A$ contain three distinct letters, say $1 < 2 < 3$. Consider the $k$-tuple of columns 
\[\overline{c} = (\ytableaushort{2},)\ytableaushort{1},\ytableaushort{3,2},\ytableaushort{1},\ytableaushort{3,2}, \ldots,\ytableaushort{1},\ytableaushort{3,2};\]
here the leading column $\ytableaushort{2}$ is added only for odd~$k$. This is clearly a critical $k$-tuple. We will show that the class of $f_{\overline{c}}$ is non-zero in~$H^k$. To check that $f_{\overline{c}}$ is a cocycle, we will prove that the definition of $d^{k+1}_{br}f$ on a critical $(k+1)$-tuple of columns never involves evaluating~$f$ on~$\overline{c}$. Indeed, suppose $f(\overline{c}) = d^{k+1;i}_{r}f(c_1,\ldots,c_{k+1})$. The definition of~$d^{k+1;i}_{r}$ implies $\sigma_C (c_i,c_{i+1}) = (c',c'')$, where $c'$ must be a column of~$\overline{c}$. It contains~$c_i$ by Observation~\ref{O:Comparison}, in a strict way since $\sigma_C (c_i,c_{i+1}) \neq (c_i,c_{i+1})$. Hence $c'$ has at least two entries, and must be $\ytableaushort{3,2}$. The only possibilities for its subcolumn~$c_i$ are $\ytableaushort{2}$ or $\ytableaushort{3}$. Since in~$\overline{c}$ the column preceding $\ytableaushort{3,2}$ is always $\ytableaushort{1}$, one gets $c_{i-1} = \ytableaushort{1}$. But then $\sigma_C (c_{i-1},c_{i}) = (c_{i-1},c_{i})$, which contradicts the criticality. 

To conclude, let us check that all cocycles $d^{k}_{br}g$, $g \in CrC^{k-1}$, vanish on~$\overline{c}$. Indeed, when pulling a column $\ytableaushort{1}$ or $\ytableaushort{2}$ to the right of~$\overline{c}$, we will get an empty column only if we create neighbors $\ytableaushort{2,1},\ytableaushort{3,2}$ while moving. Similarly, while pulling $\ytableaushort{3,2}$ we create neighbors $\ytableaushort{3,2,1},\ytableaushort{3,2}$. In any case, the critical cochain~$g$ vanishes on the resulting non-critical $(k-1)$-tuple.

\item For $f \in CrC^1$, the cocycle condition reads $f(c_1 c_2)=0$ for gluable columns $c_1,c_2$, and is trivial for non-gluable arguments. So a cocycle vanishes on columns with $\ge 2$ cells, and can be arbitrary on one-cell columns. Since $d^{1}_{br} = 0$, we are done.

\item Similarly to Point~1, one shows that a cocycle $f \in CrC^2$ can be defined arbitrarily on couples from~$\CCol_A^{\wedge}$, i.e., on $(\ytableaushort{a},\ytableaushort{b,d})$ with $a \le b$. In particular, critical cochains supported on~$\CCol_A^{\wedge}$ are cocycles. Moreover, a cocycle which is non-zero on~$\CCol_A^{\wedge}$ is non-zero in cohomology, since all coboundaries vanish on~$\CCol_A^{\wedge}$. To conclude, we will present any cocycle $f \in CrC^2$ as $h + d^{2}_{br}g$, where
\begin{itemize}
 \item $h$ coincides with~$f$ on~$\CCol_A^{\wedge}$ and vanishes outside;
 \item $g(a)=0$, and $g(c) = f(a,c')$ for a column~$c$ decomposed as $c=ac'$; here and until the end of the proof notation $a$ stands for a one-cell column, and $c,c',c_i$ for non-empty columns.
\end{itemize} 
By construction, the cocycle $f':=f-(h + d^{2}_{br}g)$ vanishes on~$\CCol_A^{\wedge}$ and on gluable couples $(a,c')$. Consider the cocycle condition $(d^{3}_{br} f')(a,c_2,c_3)$. For a gluable triple, it implies $f'(c,c')=0$ for any gluable arguments. When $a$ and $c_2$ are gluable but $c_2$ and $c_3$ are not, one concludes $f'(c,c')=0$ for non-gluable arguments with $c$ of length $\ge 2$. When $c_2$ and $c_3$ are gluable, $a$ and $c_2$  are not, and $c_2$ is of length $\ge 2$, one gets $f'(a,c_2 c_3) = f'(c'_2, bc_3)$, where $c'_2$ is the column~$c_2$ with some letter~$b$ replaced with~$a$. Columns $c'_2$ and $bc_3$ are not gluable since $a \le b$. Moreover, $c'_2$ and $c_2$ have the same length $\ge 2$. The previous case then yields $f'(a,c_2 c_3) = f'(c'_2, bc_3) = 0$. Summarizing, $f'$ can be non-zero only on non-gluable $(a,c)$ with $c$ of length $\le 2$. But these couples are either from~$\CCol_A^{\wedge}$, or non-critical (if $c$ is one-cell). So $f'$ is the zero map.

\item Since cocycles $f,g \in CrC^1$ vanish on columns with two cells, one has
\[f \smile g (\ytableaushort{a},\ytableaushort{b,d}) =f(\ytableaushort{a})g(\ytableaushort{b,d}) - f(\ytableaushort{p,q})g(\ytableaushort{r}) = 0.\]
Here $a \le b$, and $(\ytableaushort{p,q}, \ytableaushort{r})$ is $(\ytableaushort{b,a}, \ytableaushort{d})$ if $a \le d$, and $(\ytableaushort{a,d}, \ytableaushort{b})$ otherwise.

\item These cohomology computations follow from the previous points, and from the following explicit list of critical $k$-tuples of columns for $A= \{1,2\}$:
\begin{itemize}
\item $k=1$: $\ytableaushort{1}$, $\ytableaushort{2}$, and $\ytableaushort{2,1}$;
\item $k=2$: $(\ytableaushort{2},\ytableaushort{1})$, $(\ytableaushort{1},\ytableaushort{2,1})$, and $(\ytableaushort{2},\ytableaushort{2,1})$;
\item $k=3$: $(\ytableaushort{2},\ytableaushort{1},\ytableaushort{2,1})$;
\item $k \ge 4$: void.
\end{itemize}

Cup products are computed using
\begin{align*}
(f \smile g)(\ytableaushort{2},\ytableaushort{1},\ytableaushort{2,1}) = &f(\ytableaushort{2})g(\ytableaushort{1},\ytableaushort{2,1}) + f(\ytableaushort{2,1})g(\ytableaushort{2},\ytableaushort{1}) \\
&+ f(\ytableaushort{2},\ytableaushort{1})g(\ytableaushort{2,1}) - f(\ytableaushort{2},\ytableaushort{2,1})g(\ytableaushort{1}). \qedhere
\end{align*}
\end{enumerate}
\end{proof}
\ytableausetup{boxsize=1.15em}

\begin{remark}
In the proof we decomposed the $\kk$-module of $2$-cocycles of~$\Pl_A$ as $(\CCol_A^{\wedge})^* \oplus (\Cols^{\ge 2}_A)^*$, where $\Cols^{\ge 2}_A$ is the set of $A$-columns with at least two cells.
\end{remark}

\begin{corollary}
The cohomological dimension of~$\Pl_A$ is
\begin{itemize}
\item $\infty$ if $|A| > 2$;
\item $3$ if $|A| = 2$;
\item $1$ if $|A| = 1$.
\end{itemize}
\end{corollary}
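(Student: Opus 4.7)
The plan is to split by $|A|$, combining the non-vanishing cocycles produced in Theorem~\ref{T:CohomPlactic2} with a finiteness argument for the critical cochain complex when we need the vanishing half. For $|A|>2$ nothing remains to be done: Theorem~\ref{T:CohomPlactic2}(1) already produces a non-zero class in $H^k(\Pl_A;\kk,\varepsilon_0)$ for every $k\ge 1$, forcing the cohomological dimension to be infinite.

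For $|A|=2$, the lower bound $\ge 3$ is immediate from the non-vanishing of $H^3(\Pl_A;\kk,\varepsilon_0)$ in Theorem~\ref{T:CohomPlactic2}(5). For the upper bound, I would invoke the explicit list of critical tuples from the proof of that same point: the set of critical $k$-tuples of $A$-columns is empty as soon as $k\ge 4$. Thus the critical cochain complex of $(\Cols_A,\sigma_C,\emptyc)$ is zero in degrees $>3$ regardless of the coefficients, and the corresponding free resolution of the trivial $\kk\Pl_A$-module arising from the idempotent braiding $\sigma_C$ (via the discrete Morse machinery behind Theorem~\ref{T:BrHomIdempot}) has length at most $3$. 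Hence Hochschild cohomology of $\Pl_A$ vanishes in degrees $>3$ for every bimodule of coefficients.

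For $A=\{a\}$, the only non-empty column is $\ytableaushort{a}$, and since $\ytableaushort{a}\ast\ytableaushort{a}=\ytableaushort{aa}$ has two columns, one has $\sigma_C(\ytableaushort{a},\ytableaushort{a})=(\ytableaushort{a},\ytableaushort{a})$. Consequently, the only critical tuple of non-empty columns is $\ytableaushort{a}$ itself; the critical cochain complex is concentrated in degrees $0$ and $1$, and the cocycle $f_{\ytableaushort{a}}$ represents a non-trivial class in $H^1(\Pl_A;\kk,\varepsilon_0)$ (no coboundaries are available to kill it). Hence the cohomological dimension equals $1$, in agreement with the classical fact that the monoid algebra here is the polynomial ring $\kk[x]$.

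The main obstacle lies in the upper bound step for $|A|=2$: Theorem~\ref{T:CohomPlactic2}(5) computes cohomology only for the specific character $\varepsilon_0$, whereas the cohomological dimension demands vanishing for every coefficient bimodule. Bridging this gap relies on the extension of Theorem~\ref{T:BrHomIdempot} to arbitrary braided bimodule coefficients alluded to in Section~\ref{S:BrHom}, together with the identification of the critical cochain complex as arising from a free $\kk\Pl_A$-bimodule resolution of $\kk$, whose length is bounded by the top degree carrying a non-empty set of critical tuples.
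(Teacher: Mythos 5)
Your argument is correct and follows essentially the same route as the paper: lower bounds from the non-vanishing classes of Theorem~\ref{T:CohomPlactic2} with the character~$\varepsilon_0$, and upper bounds from the observation that there are no critical $k$-tuples of columns for $k\ge 4$ when $|A|=2$ (resp.\ $k\ge 2$ when $|A|=1$), which is exactly what the paper invokes. Your additional care about arbitrary bimodule coefficients (a resolution whose length is the top degree with non-empty critical tuples, via the discrete Morse machinery behind Theorem~\ref{T:BrHomIdempot}) only makes explicit what the paper leaves implicit; just note that the relevant object is a bimodule resolution of $\kk\Pl_A$ itself rather than of~$\kk$.
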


This follows from the theorem and the following observation: for a $2$- or $1$-letter alphabet, there are no critical $k$-tuples of columns with $k \ge 4$ (resp., $k \ge 2$). The $1$-letter case gives the well-known cohomological dimension of the polynomial ring $\kk [x]$.

We hope that the two examples from this section convinced the reader that a detour by the braided cohomology can considerably simplify Hochschild cohomology computations.

\bigskip
\footnotesize
\bibliographystyle{alpha}
\bibliography{biblio}

\def\cprime{$'$}
\begin{thebibliography}{GIVdB98}

\bibitem[BCCL15]{placticGSbasis2}
L.~A. Bokut, Yuqun Chen, Weiping Chen, and Jing Li.
\newblock New approaches to plactic monoid via {G}r\"obner--{S}hirshov bases.
\newblock {\em J. Algebra}, 423:301--317, 2015.

\bibitem[CEGN14]{BirackHom}
Jose Ceniceros, Mohamed Elhamdadi, Matthew Green, and Sam Nelson.
\newblock Augmented biracks and their homology.
\newblock {\em Internat. J. Math.}, 25(9):1450087, 19, 2014.

\bibitem[CES04]{HomologyYB}
J.~Scott Carter, Mohamed Elhamdadi, and Masahico Saito.
\newblock Homology theory for the set-theoretic {Y}ang--{B}axter equation and
  knot invariants from generalizations of quandles.
\newblock {\em Fund. Math.}, 184:31--54, 2004.

\bibitem[CGM15]{placticGSbasis}
Alan~J. Cain, Robert~D. Gray, and Ant{\'o}nio Malheiro.
\newblock Finite {G}r\"obner--{S}hirshov bases for plactic algebras and
  biautomatic structures for plactic monoids.
\newblock {\em J. Algebra}, 423:37--53, 2015.

\bibitem[Cho10]{Chouraqui}
Fabienne Chouraqui.
\newblock Garside groups and {Y}ang--{B}axter equation.
\newblock {\em Comm. Algebra}, 38(12):4441--4460, 2010.

\bibitem[CO04]{COPlactic}
Ferran Ced{\'o} and Jan Okni{\'n}ski.
\newblock Plactic algebras.
\newblock {\em J. Algebra}, 274(1):97--117, 2004.

\bibitem[Deh15]{DehYBE}
Patrick Dehornoy.
\newblock Set-theoretic solutions of the {Y}ang--{B}axter equation,
  {RC}-calculus, and {G}arside germs.
\newblock {\em Adv. Math.}, 282:93--127, 2015.

\bibitem[DG16]{DehGui}
Patrick {Dehornoy} and Yves {Guiraud}.
\newblock {Quadratic normalization in monoids}.
\newblock {\em Int. J. Algebra Comput.}, 2016.

\bibitem[DJM90]{placticQuantum}
Etsur{\=o} Date, Michio Jimbo, and Tetsuji Miwa.
\newblock Representations of {$U_q(\mathfrak{gl}(n,{\bf C}))$} at {$q=0$} and
  the {R}obinson--{S}hensted [{S}chensted] correspondence.
\newblock In {\em Physics and mathematics of strings}, pages 185--211. World
  Sci. Publ., Teaneck, NJ, 1990.

\bibitem[Dri92]{DrST}
V.~G. Drinfeld.
\newblock On some unsolved problems in quantum group theory.
\newblock In {\em Quantum groups ({L}eningrad, 1990)}, volume 1510 of {\em
  Lecture Notes in Math.}, pages 1--8. Springer, Berlin, 1992.

\bibitem[DT10]{DolanTrimble}
J.~Dolan and T.~Trimble.
\newblock Buildings for category theorists.
\newblock
  {http://ncatlab.org/toddtrimble/published/Buildings+for+category+theorists},
  2010.

\bibitem[Eis05]{Eisermann}
Michael Eisermann.
\newblock Yang--{B}axter deformations of quandles and racks.
\newblock {\em Algebr. Geom. Topol.}, 5:537--562 (electronic), 2005.

\bibitem[ESS99]{ESS}
Pavel Etingof, Travis Schedler, and Alexandre Soloviev.
\newblock Set-theoretical solutions to the quantum {Y}ang--{B}axter equation.
\newblock {\em Duke Math. J.}, 100(2):169--209, 1999.

\bibitem[FG98]{FomGr}
Sergey Fomin and Curtis Greene.
\newblock Noncommutative {S}chur functions and their applications.
\newblock {\em Discrete Math.}, 193(1-3):179--200, 1998.
\newblock Selected papers in honor of Adriano Garsia (Taormina, 1994).

\bibitem[FGG16]{FarinatiGalofre}
Marco~A. Farinati and Juliana Garc{\'{\i}}a~Galofre.
\newblock A differential bialgebra associated to a set theoretical solution of
  the {Y}ang--{B}axter equation.
\newblock {\em J. Pure Appl. Algebra}, 220(10):3454--3475, 2016.

\bibitem[FRS93]{FRS_BirackHom}
Roger Fenn, Colin Rourke, and Brian Sanderson.
\newblock An introduction to species and the rack space.
\newblock In {\em Topics in knot theory ({E}rzurum, 1992)}, volume 399 of {\em
  NATO Adv. Sci. Inst. Ser. C Math. Phys. Sci.}, pages 33--55. Kluwer Acad.
  Publ., Dordrecht, 1993.

\bibitem[GI96]{GISkew}
Tatiana Gateva-Ivanova.
\newblock Skew polynomial rings with binomial relations.
\newblock {\em J. Algebra}, 185(3):710--753, 1996.

\bibitem[GIVdB98]{GIBergh}
Tatiana Gateva-Ivanova and Michel Van~den Bergh.
\newblock Semigroups of {$I$}-type.
\newblock {\em J. Algebra}, 206(1):97--112, 1998.

\bibitem[GM11]{GanMaz}
Olexandr Ganyushkin and Volodymyr Mazorchuk.
\newblock On {K}iselman quotients of 0-{H}ecke monoids.
\newblock {\em Int. Electron. J. Algebra}, 10:174--191, 2011.

\bibitem[Gre74]{placticGreene}
Curtis Greene.
\newblock An extension of {S}chensted's theorem.
\newblock {\em Advances in Math.}, 14:254--265, 1974.

\bibitem[HO14]{HessOz}
Alexander {He{\ss}} and Viktoriya {Ozornova}.
\newblock {Factorability, String Rewriting and Discrete Morse Theory}.
\newblock {\em ArXiv e-prints}, December 2014.

\bibitem[HST09]{HST0Hecke}
Florent Hivert, Anne Schilling, and Nicolas~M. Thi{\'e}ry.
\newblock Hecke group algebras as quotients of affine {H}ecke algebras at level
  0.
\newblock {\em J. Combin. Theory Ser. A}, 116(4):844--863, 2009.

\bibitem[JO05]{JesOknI}
Eric Jespers and Jan Okni{\'n}ski.
\newblock Monoids and groups of {$I$}-type.
\newblock {\em Algebr. Represent. Theory}, 8(5):709--729, 2005.

\bibitem[JW09]{JoWe}
Michael J{\"o}llenbeck and Volkmar Welker.
\newblock Minimal resolutions via algebraic discrete {M}orse theory.
\newblock {\em Mem. Amer. Math. Soc.}, 197(923):vi+74, 2009.

\bibitem[Ken11]{Kenney}
Toby Kenney.
\newblock The path relation for directed planar graphs in rectangles, and its
  relation to the free diad.
\newblock {\em Discrete Math.}, 311(6):441--456, 2011.

\bibitem[Ken14]{Kenney2}
Toby Kenney.
\newblock Coxeter groups, {C}oxeter monoids and the {B}ruhat order.
\newblock {\em J. Algebraic Combin.}, 39(3):719--731, 2014.

\bibitem[Knu70]{placticKnuth}
Donald~E. Knuth.
\newblock Permutations, matrices, and generalized {Y}oung tableaux.
\newblock {\em Pacific J. Math.}, 34:709--727, 1970.

\bibitem[KT97]{placticQuantumKT}
Daniel Krob and Jean-Yves Thibon.
\newblock Noncommutative symmetric functions. {IV}. {Q}uantum linear groups and
  {H}ecke algebras at {$q=0$}.
\newblock {\em J. Algebraic Combin.}, 6(4):339--376, 1997.

\bibitem[Leb13]{Lebed1}
Victoria Lebed.
\newblock Homologies of algebraic structures via braidings and quantum
  shuffles.
\newblock {\em J. Algebra}, 391:152--192, 2013.

\bibitem[{Leb}16]{LebedIdempot}
Victoria {Lebed}.
\newblock {Cohomology of idempotent braidings, with applications to
  factorizable monoids}.
\newblock {\em ArXiv e-prints}, July 2016.

\bibitem[LLT02]{plactic}
Alain Lascoux, Bernard Leclerc, and Jean-Yves Thibon.
\newblock {\em The plactic monoid}.
\newblock Cambridge Univ. Press, 2002.
\newblock In: Algebraic Combinatorics on Words.

\bibitem[Lop16]{Lopatkin}
Viktor Lopatkin.
\newblock Cohomology rings of the plactic monoid algebra via a
  {G}r\"obner--{S}hirshov basis.
\newblock {\em J. Algebra Appl.}, 15(5):1650082, 30, 2016.

\bibitem[LS78]{FoulkesLS}
Alain Lascoux and Marcel-Paul Sch{\"u}tzenberger.
\newblock Sur une conjecture de {H}. {O}. {F}oulkes.
\newblock {\em C. R. Acad. Sci. Paris S\'er. A-B}, 286(7):A323--A324, 1978.

\bibitem[LS81]{placticLS}
Alain Lascoux and Marcel-Paul Sch{\"u}tzenberger.
\newblock Le mono\"\i de plaxique.
\newblock In {\em Noncommutative structures in algebra and geometric
  combinatorics ({N}aples, 1978)}, volume 109 of {\em Quad. ``Ricerca Sci.''},
  pages 129--156. CNR, Rome, 1981.

\bibitem[LS90]{LS88}
Alain Lascoux and Marcel-Paul Sch{\"u}tzenberger.
\newblock Keys \& standard bases.
\newblock In {\em Invariant theory and tableaux ({M}inneapolis, {MN}, 1988)},
  volume~19 of {\em IMA Vol. Math. Appl.}, pages 125--144. Springer, New York,
  1990.

\bibitem[LV17]{LebedVendramin}
Victoria Lebed and Leandro Vendramin.
\newblock Homology of left non-degenerate set-theoretic solutions to the
  {Y}ang--{B}axter equation.
\newblock {\em Adv. Math.}, 304:1219--1261, 2017.

\bibitem[PW16]{PrzWang}
J{\'o}zef~H. {Przytycki} and Xiao {Wang}.
\newblock {Equivalence of two definitions of set-theoretic Yang--Baxter
  homology}.
\newblock {\em ArXiv e-prints}, November 2016.

\bibitem[Rob38]{Robinson}
G.~de~B. Robinson.
\newblock On the {R}epresentations of the {S}ymmetric {G}roup.
\newblock {\em Amer. J. Math.}, 60(3):745--760, 1938.

\bibitem[Rum05]{Rump}
Wolfgang Rump.
\newblock A decomposition theorem for square-free unitary solutions of the
  quantum {Y}ang--{B}axter equation.
\newblock {\em Adv. Math.}, 193(1):40--55, 2005.

\bibitem[Sch61]{Schensted}
Craige Schensted.
\newblock Longest increasing and decreasing subsequences.
\newblock {\em Canad. J. Math.}, 13:179--191, 1961.

\bibitem[Sch77]{LRrule}
Marcel-Paul Sch{\"u}tzenberger.
\newblock La correspondance de {R}obinson.
\newblock In {\em Combinatoire et repr\'esentation du groupe sym\'etrique
  ({A}ctes {T}able {R}onde {CNRS}, {U}niv. {L}ouis-{P}asteur {S}trasbourg,
  {S}trasbourg, 1976)}, pages 59--113. Lecture Notes in Math., Vol. 579.
  Springer, Berlin, 1977.

\bibitem[Sk{\"o}06]{Skol}
Emil Sk{\"o}ldberg.
\newblock Morse theory from an algebraic viewpoint.
\newblock {\em Trans. Amer. Math. Soc.}, 358(1):115--129 (electronic), 2006.

\bibitem[Tsa90]{Tsaranov}
S.~V. Tsaranov.
\newblock Representation and classification of {C}oxeter monoids.
\newblock {\em European J. Combin.}, 11(2):189--204, 1990.

\bibitem[Yan16]{YangGraphYBE}
Dilian Yang.
\newblock The interplay between {$k$}-graphs and the {Y}ang--{B}axter equation.
\newblock {\em J. Algebra}, 451:494--525, 2016.

\end{thebibliography}
\end{document}